\begin{document}
\theoremstyle{plain} 
\newtheorem{theorem}{\bf Theorem}[section]
\newtheorem{lemma}[theorem]{\bf Lemma}
\newtheorem{corollary}[theorem]{\bf Corollary}
\newtheorem{proposition}[theorem]{\bf Proposition}
\theoremstyle{definition} 
\newtheorem{definition}[theorem]{\sc Definition}
\newtheorem{remark}[theorem]{\sc Remark}
\newtheorem{example}[theorem]{\sc Example}
%
\renewcommand{\proofname}{\bf Proof.}
\mathchardef\varTheta="0102
\mathchardef\varLambda="0103
\mathchardef\varGamma="0100
\mathchardef\varXi="0104
\mathchardef\varPi="0105
\mathchardef\varPsi="0109
\mathchardef\varSigma="0106
\mathchardef\varPhi="0108
\mathchardef\varOmega="010A
\font\tensans=cmss10
\def\bbR{\mathrm{I\!R}}
\def\bbL{\hbox{$I$\hskip-4.1pt$L$}}
\def\rto{\mathbf{R}\hskip-.5pt^2}
\def\rtr{\mathbf{R}\hskip-.7pt^3}
\def\rn{{\mathbf{R}}^{\hskip-.6ptn}}
\def\rk{{\mathbf{R}}^{\hskip-.6ptk}}
\def\hyp{\hskip.5pt\vbox
{\hbox{\vrule width3ptheight0.5ptdepth0pt}\vskip2.2pt}\hskip.5pt}
\def\xy{\hskip-1.2pt\overrightarrow{\hskip1.2ptxy\hskip1.2pt}\hskip-1.2pt}
\def\tzk{{T\hskip-3pt_z\hskip-.6ptK}}
\def\tzm{{T\hskip-3pt_z\hskip-.6ptM}}
\def\tzz{{T\hskip-3pt_z\hskip-.6ptZ}}
\def\txk{{T\hskip-3pt_x\hskip-.6ptK}}
\def\txm{{T\hskip-3pt_x\hskip-.6ptM}}
\def\txn{{T\hskip-2.7pt_x\hskip-.9ptN}}
\def\tzn{{T\hskip-3pt_z\hskip-.9ptN}}
\def\tazn{{T^*_{\hskip-3ptz}N}}
\def\tzp{{T\hskip-3pt_z\hskip-.9ptP}}
\def\tm{{T\hskip-.3ptM}}
\def\tn{{T\hskip-.3ptN}}
\def\tam{{T^*\!M}}
\def\mppp{\hbox{$-$\hskip1pt$+$\hskip1pt$+$\hskip1pt$+$}}
\def\mpdp{\hbox{$-$\hskip1pt$+$\hskip1pt$\dots$\hskip1pt$+$}}
\def\mmpp{\hbox{$-$\hskip1pt$-$\hskip1pt$+$\hskip1pt$+$}}
\def\mmmp{\hbox{$-$\hskip1pt$-$\hskip1pt$-$\hskip1pt$+$}}
\def\pppp{\hbox{$+$\hskip1pt$+$\hskip1pt$+$\hskip1pt$+$}}
\def\mpmp{\hbox{$-$\hskip1pt$\pm$\hskip1pt$+$}}
\def\mpmpp{\hbox{$-$\hskip1pt$\pm$\hskip1pt$+$\hskip1pt$+$}}
\def\mmpmp{\hbox{$-$\hskip1pt$-$\hskip1pt$\pm$\hskip1pt$+$}}
\def\hs{\hskip.7pt}
\def\hh{\hskip.4pt}
\def\nh{\hskip-.7pt}
\def\nnh{\hskip-1pt}
\def\ve{\varepsilon}

\voffset=-20pt\hoffset=12pt  

\title[Zeros of conformal fields]{Zeros of conformal fields in 
any metric signature}

\author{Andrzej Derdzinski}

\address{Department of Mathematics, 
The Ohio State University, 
Columbus, OH 43210, 
USA}

\ead{\mailto{andrzej@math.ohio-state.edu}}
\begin{abstract}
The connected components of the zero set of any conformal vector field, in a 
pseudo-Riemannian manifold of arbitrary signature, are shown to be totally 
umbilical conifold varieties, that is, smooth submanifolds except possibly 
for some quadric singularities. The singularities occur only when the metric 
is indefinite, including the Lorentzian case. This generalizes an analogous 
result in the Riemannian case, due to Belgun, Moroianu and Ornea (2010).
\end{abstract}

\ams{53B30}
\maketitle

\setcounter{theorem}{0}
\renewcommand{\thetheorem}{\Alph{theorem}}
\section{Introduction}\label{intr}
A vector field $\,v\,$ on a pseu\-\hbox{do\hs-}\hskip0ptRiem\-ann\-i\-an 
manifold $\,(M,g)\,$ of dimension $\,n\ge2\,$ is called {\it con\-for\-mal\/} 
if, for some function $\,\phi:M\to\bbR$,
\begin{equation}\label{lvg}
\pounds_vg\,\,=\,\,\phi\hs g\hs,\hskip6pt\mathrm{that\ is,\ in\ 
coordinates,}\hskip5ptv_{j,\hh k}\nh+v_{\hh k,\hs j}\nh=\hs\phi\hs g_{jk}\hh.
\end{equation}
One then obviously has $\,\mathrm{div}\hskip2ptv=n\hs\phi/2$. The class of 
con\-for\-mal vector fields on $\,(M,g)\,$ includes {\it Kil\-ling fields\/} 
$\,v$, characterized by (\ref{lvg}) with $\,\phi=0$.

Kobayashi \cite{kobayashi} showed that, for any Kil\-ling vector field $\,v\,$ 
on a Riemannian manifold $\,(M,g)$, the connected components of the zero set 
of $\,v\,$ are mutually isolated totally geodesic sub\-man\-i\-folds of even 
co\-di\-men\-sions. Assuming compactness of $\,M\nh$, Blair \cite{blair} 
established an analogue of Ko\-ba\-ya\-shi's theorem for con\-for\-mal 
vector fields, in which the word `geodesic' is replaced by `umbilical' and 
the co\-di\-men\-sion clause is relaxed for one\hh-\hskip0ptpoint connected 
components. Very recently, Belgun, Moroianu and Ornea 
\cite{belgun-moroianu-ornea} proved that Blair's conclusion remains valid in 
the noncompact case.

It is natural to ask what happens when the metric $\,g\,$ is indefinite. 
Questions about the structure of con\-for\-mal fields arise in connection with 
some known open problems, such as those related to the 
pseu\-\hbox{do\hs-}\hskip0ptRiem\-ann\-i\-an Lich\-ne\-ro\-wicz conjecture 
\cite{frances}.

The result of Belgun, Moroianu and Ornea \cite{belgun-moroianu-ornea}, 
mentioned above, becomes false when repeated verbatim for indefinite metrics: 
even in pseu\-\hbox{do\hs-}\hskip0ptEuclid\-e\-an spaces, connected components 
of the zero set of a con\-for\-mal vector field may have quadric 
singularities (see Example~\ref{gensg} below). Such singularities, however, 
are the worst that can occur, aside from the fact that the codimension 
restriction has to be modified as well, cf.\ Example~\ref{gensg}. More 
precisely, the following theorem is proved in Section~\ref{cp}. (A set in a 
vector space is called {\it star-shap\-ed\/} if it is a union of line segments 
emanating from $\,0$.)
\begin{theorem}\label{tusbm}Let\/ $\,Z\,$ denote the zero set of a 
con\-for\-mal vector field\/ $\,v\,$ on a 
pseu\-\hbox{do\hs-}\hskip0ptRiem\-ann\-i\-an manifold\/ $\,(M,g)\,$ of 
dimension\/ $\,n\ge3$. Then every point\/ $\,z\in Z\,$ has a neighborhood\/ 
$\,\,U'$ in\/ $\,M\hs$ such that, for some star-shap\-ed neighborhood\/ 
$\,\,U\hs$ of\/ $\,0\,$ in\/ $\,\tzm\nh$, and some metric\/ $\,g\hh'$ on\/ 
$\,\,U'$ conformal to\/ $\,g$, the exponential mapping\/ $\,\mathrm{exp}\hh_z$ 
of\/ $\,g\hh'$ at\/ $\,z\,$ is defined on\/ $\,\,U\,$ and maps\/ $\,\,U\,$ 
dif\-feo\-mor\-phi\-cal\-ly onto\/ $\,\,U'\nnh$, while\/ 
$\,Z\cap U'\nh=\hh\mathrm{exp}\hh_z[E\cap U\hh]\,$ for\/ $\,E\subseteq\tzm\,$ 
which is
\begin{enumerate}
  \def\theenumi{{\rm\roman{enumi}}}
\item[{\rm(a)}] a vector subspace of\/ $\,\tzm\nh$, or
\item[{\rm(b)}] the set of all null vectors in a vector subspace\/ 
$\,H\subseteq\tzm\nh$.
\end{enumerate}
The singular subset\/ $\,\Delta\,$ of\/ $\,Z\cap U'$ equals\/ 
$\,\hh\mathrm{exp}\hh_z[H\cap H^\perp\nnh\cap U\hh]\,$ in case\/ 
{\rm(ii)}, if the metric restricted to\/ $\,H\,$ is not sem\-i\-def\-i\-nite, 
and\/ $\,\Delta=$\hskip3.5pt{\rm\O}\hskip4.5ptotherwise. The connected 
components of\/ $\,(Z\cap U')\smallsetminus\Delta\,$ are totally umbilical 
sub\-man\-i\-folds of\/ $\,(M,g)$, and their co\-di\-men\-sions are even 
unless\/ $\,\Delta=$\hskip3.5pt{\rm\O}\hskip4.5ptand\/ $\,Z\cap U'$ is a null 
totally geodesic sub\-man\-i\-fold of\/ $\,(M,g)$. In addition, 
$\,\mathrm{div}\hskip2ptv\,$ is constant along each connected component of\/ 
$\,Z$.
\end{theorem}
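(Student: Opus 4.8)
The plan is to reduce the whole statement to the $\,2$-jet of $\,v\,$ at $\,z\,$ together with one well-chosen conformal representative. First I would linearize. At a zero $\,z\,$ set $\,A=\nabla v|_z\in\mathrm{End}\,(\tzm)$; differentiating (\ref{lvg}) and using $\,v(z)=0\,$ shows that the $\,g_z$-symmetric part of $\,A\,$ is $\,\lambda\,\mathrm{Id}$, with $\,\lambda=\phi(z)/2$, so that $\,A=\lambda\,\mathrm{Id}+F\,$ for a $\,g_z$-skew-adjoint $\,F$. Differentiating (\ref{lvg}) a second time and again invoking $\,v(z)=0\,$ expresses the Hessian $\,\nabla^2v|_z\,$ through $\,d\phi_z\,$ alone, since every curvature term carries a factor $\,v$. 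Hence in $\,g$-normal coordinates $\,v=Ax+Q(x)+O(|x|^3)$, with $\,Q\,$ the explicit quadratic built from $\,d\phi_z$. The alternative (a)/(b) will be governed by the class of $\,d\phi_z\,$ modulo $\,\mathrm{Im}\,A^{*}=(\ker A)^{\perp}$.

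Next I would exploit the conformal freedom. Under $\,g'=e^{2f}g\,$ the factor becomes $\,\phi+2v(f)$, so $\,\phi(z)=2\lambda\,$ is fixed while $\,d\phi_z\,$ changes by $\,2A^{*}df_z\in(\ker A)^{\perp}$; thus the class of $\,d\phi_z\,$ modulo $\,(\ker A)^{\perp}\,$ is a conformal invariant. Case (a) is when this class vanishes, i.e.\ $\,d\phi_z\in(\ker A)^{\perp}$. Then $\,f\,$ may be chosen so that $\,v\,$ is $\,g'$-homothetic near $\,z$, with $\,\phi'\equiv2\lambda$: one solves the transport equation $\,2v(f)=2\lambda-\phi$, whose single order-by-order obstruction is precisely the residue just annihilated. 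A homothety is affine, hence its flow commutes with the $\,g'$-exponential $\,\mathrm{exp}_z\,$ through $\,(d\psi_t)_z=e^{tA}$, so the pullback of $\,v\,$ is $\,w\mapsto(d\,\mathrm{exp}_z)_w(Aw)$ and its zero set is exactly $\,\ker A$. This gives (a) with $\,E=\ker A$, on the nose, with no higher-order analysis required.

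For the refinements in case (a) I would analyze $\,\ker A=\ker(\lambda\,\mathrm{Id}+F)$. If $\,\lambda=0\,$ this is the radical of the $\,2$-form $\,g_z(F\hh\cdot\,,\cdot\,)$, of even codimension, and $\,v\,$ is $\,g'$-Killing, so $\,Z\,$ is $\,g'$-totally geodesic and therefore $\,g$-totally umbilical. If $\,\lambda\ne0\,$ then $\,\ker(\lambda\,\mathrm{Id}+F)\,$ is a real eigenspace of a skew-adjoint operator, hence totally null; as the fixed locus of an affine flow it is $\,g'$-totally geodesic, and since null geodesics are conformally invariant it is $\,g$-totally geodesic as well. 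This is exactly the odd-codimension, null exception with $\,\Delta=\emptyset$. Constancy of $\,\mathrm{div}\,v=n\phi/2\,$ along a component then follows from the vanishing of $\,d\phi\,$ on $\,TZ\,$ at each of its points, i.e.\ the relation $\,d\phi_z\in(\ker A)^{\perp}=(T_zZ)^{\perp}\,$ applied pointwise.

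The main obstacle is case (b), where the residue of $\,d\phi_z\,$ is nonzero: no representative makes $\,v\,$ homothetic, the flow commutes with no exponential map, and the fixed-point argument collapses. My plan is to show that this residue pins the model to the pseudo-Euclidean special-conformal field of Example~\ref{gensg}. After normalizing $\,g'\,$ so that the surviving $\,d\phi_z\,$ is dual to a single vector $\,b$, the quadratic truncation of $\,v\,$ is $\,w\mapsto2g_z(b,w)\,w-g_z(w,w)\,b$, whose zeros are the null vectors of $\,H=b^{\perp}$. The remaining task is to upgrade this to an exact statement: using the rigidity of the conformal Killing equation along the $\,g'$-geodesics emanating from $\,z$, I would show that $\,v\,$ vanishes along such a geodesic ray iff its initial velocity is a null vector of $\,H$, giving $\,E=\{w\in H:g_z(w,w)=0\}\,$ and $\,Z\cap U'=\mathrm{exp}_z[E\cap U]\,$ exactly. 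Finally, the quadric $\,E\,$ is smooth off the radical $\,H\cap H^{\perp}$, and singular there precisely when $\,g|_H\,$ is indefinite, yielding $\,\Delta=\mathrm{exp}_z[H\cap H^{\perp}\cap U]\,$ in that case and $\,\Delta=\emptyset\,$ otherwise; a direct second-fundamental-form computation shows the smooth part is totally umbilical and of even codimension.
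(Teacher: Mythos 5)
Your reduction to the $2$-jet and the conformal normalization contains two genuine gaps, one in each half of the outline.

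First, your case (a) with $\lambda=\phi(z)/2\neq0$ rests on the claim that the singular transport equation $2v(f)=2\lambda-\phi$ is smoothly solvable near $z$, with ``the single order-by-order obstruction'' occurring at first order. That is not so. The operator $f\mapsto v(f)$ acts on degree-$k$ homogeneous polynomials (in coordinates linearizing the leading term $Ax$) with eigenvalues $\sum_i m_i\alpha_i$, $\sum_i m_i=k$, where $\alpha_i=\lambda+\mu_i$ and the $\mu_i$ are eigenvalues of the skew-adjoint part $F$; for an indefinite metric the $\mu_i$ occur in pairs $\pm\mu$ and may be real and large, so resonances $k\lambda+\sum_i m_i\mu_i=0$ occur at arbitrarily high orders (e.g.\ $\mu=\pm3\lambda$ gives one at $k=3$), and surjectivity fails there. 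Whether the specific right-hand side $2\lambda-\phi$ lies in the image at a resonant order would have to be extracted from the integrability conditions (\ref{nwn}); and even formal solvability to all orders does not produce a smooth solution near a zero whose linearization has purely imaginary or vanishing eigenvalues, since the flow of $\,v\,$ is then not hyperbolic. For $\lambda=0$ your claim is exactly Theorem~\ref{escnf} (Beig), which the paper imports from the literature precisely because it is not a routine transport argument --- you may quote it, but not re-derive it this way. For $\lambda\neq0$ no such normalization is available in the paper at all: that situation is instead routed through Theorem~\ref{nbzrs} in case (\ref{cnd}.a), where Lemma~\ref{krnll}(a) makes $\,\mathrm{Ker}\hskip1.7pt\nabla\nh v_z$ null and the conclusion $E=\mathrm{Ker}\hskip1.7pt\nabla\nh v_z=C\cap H\,$ comes out of the null-geodesic analysis, not out of a homothetic normal form.

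Second, and decisively, your case (b) plan proves only the easy inclusion. Rigidity of the conformal Killing equation along geodesics through $z$ (the linear ODE systems (\ref{vdx})--(\ref{ndx})) shows that if the initial velocity lies in $C\cap H$ then $v$ vanishes along the ray --- this is Lemma~\ref{zeros}(a), i.e.\ Lemma~\ref{cstdm}(a). It cannot give the converse, because a zero $x$ near $z$ supplies no initial data at $z$ for the ODE along the connecting geodesic: one must first show that this geodesic is null and that its direction lies in $H$, and neither the quadratic truncation of $v$ nor the pseudo-Euclidean model of Example~\ref{gensg} forces that, since zero sets are not controlled by finite jets (the higher-order terms of $v$ in normal coordinates do not vanish, and a priori zeros of $v$ could accumulate at $z$ off the model quadric). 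This converse inclusion (\ref{ncl}) is where essentially all of the paper's work lies: for (\ref{cnd}.a), Remark~\ref{phiez}, Lemma~\ref{zeros}(b) and a spectrum-finiteness argument; for (\ref{cnd}.b), the intermediate submanifold of Lemma~\ref{intsb}, connecting limits, the Morse--Bott lemma, Theorem~\ref{tgdir}, and Lemmas~\ref{tadir} and~\ref{nlsbs}. Your outline offers no substitute for this machinery, so as it stands it establishes $\exp_z[E\cap U]\subseteq Z\cap U'$ but not the equality asserted in the theorem, nor, consequently, the description of $\Delta$ and the umbilicity statements that depend on knowing $Z\cap U'$ exactly.
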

Remark~\ref{lorcs} discusses the meaning of Theorem~\ref{tusbm} in the 
Lo\-rentz\-i\-an case.

Theorem~\ref{tusbm} does not extend to dimension $\,2$. In the metric 
signature \hbox{$-$\hskip1pt$+$} the zero set of a con\-for\-mal field $\,v\,$ 
may be quite pathological (Example~\ref{srfcs}), even though on a {\it 
Riemannian\/} surface $\,(M,g)\,$ such $\,v\,$ is locally hol\-o\-mor\-phic, 
and so its zero set is discrete or equal to $\,M\nh$, cf.\ 
\cite{belgun-moroianu-ornea}.

The argument in Sections~\ref{cz} --~\ref{cp}, leading to Theorem~\ref{tusbm}, 
concentrates -- just as Belgun, Moroianu and Ornea did in 
\cite{belgun-moroianu-ornea} -- on the case where a con\-for\-mal vector field 
$\,v\,$ on a (pseu\-\hbox{do\hs-\nh})\hskip0ptRiem\-ann\-i\-an manifold 
$\,(M,g)\,$ has a zero at a point $\,z\in M\,$ satisfying one of the following 
two conditions, with $\,\phi\,$ as in (\ref{lvg}) and $\,\nabla\nh\phi_z$ 
denoting its gradient at $\,z\hh$:
\begin{equation}\label{cnd}
\begin{array}{rl}
\mathrm{a)}&\hskip3pt\phi(z)\hs\ne\hs0\hs,\\
\mathrm{b)}&\hskip3pt\phi(z)\,=\,0\hskip8pt\mathrm{and}\hskip8pt
\nabla\nh\phi_z\hs\notin\hs\nabla\nh v_z(\tzm)\hs.
\end{array}
\end{equation}
Here $\,\nabla\nh v_z(\tzm)\,$ is the image of 
$\,\nabla\nh v_z:\tzm\to\tzm\nh$, the value at $\,z\,$ of the covariant 
derivative $\,\nabla\nh v\,$ treated as the bundle morphism 
$\,\nabla\nh v:\tm\to\tm\nh\,$ which sends each vector field $\,w\,$ to 
$\,\nabla_{\!w}v$.

The use of (\ref{cnd}.a) -- (\ref{cnd}.b) is crucial in view of the following 
result of Beig \cite{beig}.
\begin{theorem}\label{escnf}For a con\-for\-mal vector field\/ $\,v\,$ on a 
pseu\-\hbox{do\hs-}\hskip0ptRiem\-ann\-i\-an manifold\/ $\,(M,g)\,$ with\/ 
$\,\dim M\ge3\,$ and a zero\/ $\,z\,$ of $\,v$, the following two conditions 
are equivalent\/{\rm:}
\begin{enumerate}
  \def\theenumi{{\rm\roman{enumi}}}
\item[{\rm(i)}] $z\,$ has a neighborhood\/ $\,\,U'$ such that\/ $\,v\,$ 
restricted to\/ $\,\,U'$ is a Kil\-ling field on\/ $\,(U'\nnh,g\hh'\hh)$, 
where\/ $\,g\hh'$ is some metric on\/ $\,\,U'$ con\-for\-mal to\/ $\,g$,
\item[{\rm(ii)}] $\phi(z)=0\,$ and\/ 
$\,\nabla\nh\phi_z\in\nabla\nh v_z(\tzm)$, that is, neither\/ 
{\rm(\ref{cnd}.a)} nor\/ {\rm(\ref{cnd}.b)} holds at\/ $\,z$.
\end{enumerate}
\end{theorem}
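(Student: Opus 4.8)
The whole statement reduces to a single scalar transport equation. For a conformal metric $g\hh'=e^{2f}g$ one computes $\pounds_vg\hh'=\pounds_v(e^{2f}g)=\bigl(2\hs v(f)+\phi\bigr)e^{2f}g=\bigl(2\hs v(f)+\phi\bigr)g\hh'$, so $\,v\,$ is conformal for every such $\,g\hh'$ (with factor $\,\phi+2\hs v(f)$) and is Killing for $\,g\hh'$ exactly when $\,2\hs v(f)+\phi=0$. Thus condition (i) is equivalent to the local solvability, near $\,z$, of
\begin{equation}\label{transp}
v(f)\,=\,-\hs\phi/2\hh,
\end{equation}
and everything below concerns (\ref{transp}).

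First I would prove (i)$\Rightarrow$(ii). Suppose $\,f\,$ solves $\,2\hs v(f)+\phi=0\,$ on a neighborhood of $\,z$. Evaluating at $\,z$, where $\,v\,$ vanishes, gives $\,\phi(z)=0$; by (\ref{lvg}) the $\,g_z$-symmetric part of $\,\nabla\nh v_z\,$ is then $\,\tfrac12\phi(z)\,\mathrm{id}=0$, so $\,A:=\nabla\nh v_z\,$ is skew-adjoint for $\,g_z$. Differentiating $\,2\hs v(f)+\phi=0\,$ once more and evaluating at $\,z$, the Hessian term drops out because $\,v(z)=0$, leaving $\,\nabla\nh\phi_z=-2A^{*}\nabla\nh f_z=2A\nabla\nh f_z\,$ with $\,A^{*}=-A$. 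Hence $\,\nabla\nh\phi_z=A(2\nabla\nh f_z)\in\nabla\nh v_z(\tzm)$, which is (ii).

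The substance is (ii)$\Rightarrow$(i). The key observation is that (\ref{transp}) has an explicit solution off the null locus of $\,v$: contracting (\ref{lvg}) with $\,v^iv^j\,$ gives $\,v\bigl(g(v,v)\bigr)=\phi\,g(v,v)$, so that
\begin{equation}\label{fstar}
f_{*}\,:=\,-\tfrac12\log|g(v,v)|
\end{equation}
satisfies $\,v(f_{*})=-\phi/2\,$ wherever $\,g(v,v)\ne0$. Since any two solutions of (\ref{transp}) differ by a first integral of $\,v$, proving (i) amounts to correcting $\,f_{*}$, by a first integral, to a function extending smoothly across $\,\{g(v,v)=0\}\,$ near $\,z$. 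The linearization $\,v\approx Ax\,$ governs the leading behavior: seeking $\,f=g_z(b,\cdot)+O(|x|^2)\,$ and using $\,\phi(z)=0$, the degree-one part of (\ref{transp}) reads $\,Ab=\tfrac12\nabla\nh\phi_z$, solvable for $\,b\,$ precisely because $\,\nabla\nh\phi_z\in\mathrm{im}\,A$. Thus (ii) is exactly the first-order compatibility condition for (\ref{transp}), and it must now be promoted to an honest local solution.

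The main obstacle is this promotion, and it is here that the signature enters decisively. In the Riemannian case $\,g(v,v)=0\,$ only where $\,v=0$, so $\,f_{*}\,$ is singular only on the zero set; near $\,z\,$ the flow of $\,v\,$ is generated by the skew operator $\,A$, hence lies in the compact group $\,\mathrm{O}(g_z)$, and averaging (\ref{transp}) over the closure of $\,\{e^{tA}\}$, a torus, produces a smooth solution --- the mechanism behind the result of Belgun, Moroianu and Ornea. For indefinite $\,g\,$ the set $\,\{g(v,v)=0\}\,$ is a hypersurface, $\,f_{*}\,$ is singular along it, and $\,\{e^{tA}\}\subseteq\mathrm{O}(g_z)\,$ need no longer be precompact, so no averaging is available. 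The plan is instead to clear the higher-order obstructions along geodesic rays of a conformal metric, exploiting the prolongation of the conformal Killing system --- the closed first-order equations satisfied by $\,\bigl(v,\,\tfrac12(\nabla\nh v-(\nabla\nh v)^{*}),\,\phi,\,\nabla\nh\phi\bigr)$ --- to show that, once the first-order obstruction (ii) is cleared, all remaining obstructions vanish automatically because $\,\phi\,$ is tied to $\,v\,$ through (\ref{lvg}) rather than being arbitrary. Equivalently, and this is the route I would ultimately follow, (ii) lets one choose $\,g\hh'$ so that $\,v\,$ becomes linear in $\,g\hh'$-exponential coordinates with skew linear part $\,A$; the flow of $\,v\,$ is then the isometric linear flow $\,e^{tA}\,$ of the frozen metric $\,g_z$, and $\,v\,$ is Killing for $\,g\hh'$. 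Making this linearization rigorous in arbitrary signature, without compactness, is the crux.
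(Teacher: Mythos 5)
Your reduction of (i) to the local solvability of $\,v(f)=-\hs\phi/2\,$ near $\,z\,$ is correct (it is the same reduction the paper makes at the start of Section~\ref{cv}), and your proof of (i)$\Rightarrow$(ii) is complete: at a zero with $\,\phi(z)=0\,$ the operator $\,\nabla\nh v_z$ is skew-adjoint by (\ref{tnv}), and differentiating $\,2\hh v(f)+\phi=0\,$ at $\,z\,$ gives $\,\nabla\nh\phi_z=2\hh\nabla\nh v_z(\nabla\nh f_z)\in\nabla\nh v_z(\tzm)$. The identification of (ii) as the first-order compatibility condition $\,\nabla\nh v_z(b)=\frac12\nabla\nh\phi_z$ is also correct. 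For what it is worth, the paper itself does not prove Theorem~\ref{escnf}: it is Beig's theorem, and the proof is cited from Capocci, so there is no in-paper argument to match; but judged on its own terms your attempt falls short of a proof.

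The genuine gap is that the implication (ii)$\Rightarrow$(i) --- the entire nontrivial content of the theorem --- is never established. What you give are two plans, and you say so yourself (``making this linearization rigorous in arbitrary signature, without compactness, is the crux''). Solving the first-order obstruction only fixes the $1$-jet of $\,f\,$ at $\,z$; the claim that ``all remaining obstructions vanish automatically'' because of the prolongation system (\ref{nwn}) is precisely what has to be proved, and no argument is offered --- this is exactly the step for which the Riemannian averaging mechanism is unavailable in indefinite signature, as you correctly note. Your alternative route contains a further unjustified step even granting its main construction: if some conformal metric $\,g\hh'$ linearizes $\,v\,$ in $\,g\hh'$-normal coordinates with skew linear part $\,A$, the flow $\,e^{tA}$ preserves the \emph{frozen} metric $\,g_z$, which is not the same as preserving $\,g\hh'$; one must invoke the Gauss lemma (in normal coordinates $\,g\hh'_{ij}(x)\hh x^j\nh=g\hh'_{ij}(0)\hh x^j$, so that $\,e^{tA}\in\mathrm{O}(g_z)\,$ forces the conformal factor of the flow to equal $\,1\,$ off the origin, hence everywhere) to conclude that $\,v\,$ is Killing for $\,g\hh'$. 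That step is repairable; but the construction of such a $\,g\hh'$ from hypothesis (ii) --- the actual crux --- is not supplied, so the hard direction remains unproved.
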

\begin{proof}See \cite{capocci}.
\end{proof}
A point $\,z\in M\,$ is said to be {\it essential\/} 
\cite{belgun-moroianu-ornea} for a con\-for\-mal vector field $\,v\,$ on 
$\,(M,g)\,$ if condition (i) in Theorem~\ref{escnf} fails to hold. Thus, by 
Theorem~\ref{escnf}, essential zeros of $\,v\,$ are precisely those zeros at 
which (\ref{cnd}.a) or (\ref{cnd}.b) is satisfied. On the other hand, points 
where $\,v\ne0\,$ are never essential, cf.\ the lines preceding 
Lemma~\ref{krnll}.

In proving Theorem~\ref{tusbm} we are allowed, by Theorem~\ref{escnf}, to make 
the additional assumption that (\ref{cnd}.a) or (\ref{cnd}.b) holds. In fact, 
if one has neither (\ref{cnd}.a) nor (\ref{cnd}.b), Theorem~\ref{escnf} 
reduces the problem to studying the zero set of a Kil\-ling field, which is 
always lin\-e\-ar\-ized by normal coordinates. Assertion (a) of 
Theorem~\ref{tusbm} then follows, for 
$\,E=\mathrm{Ker}\hskip1.7pt\nabla\nh v_z$, with $\,g\hh'$ chosen as in 
Theorem~\ref{escnf}(i). (See Section~\ref{cp}.) 

On the other hand, if one of conditions (\ref{cnd}.a) -- (\ref{cnd}.b) is 
satisfied, case (b) in Theorem~\ref{tusbm} is a direct consequence of the 
following result, proved in Sections~\ref{ca} --~\ref{cb}:
\begin{theorem}\label{nbzrs}Let\/ $\,Z\,$ be the zero set of a con\-for\-mal 
vector field\/ $\,v\,$ on a pseu\-\hbox{do\hs-}\hskip0ptRiem\-ann\-i\-an 
manifold\/ $\,(M,g)\,$ of dimension\/ $\,n\ge3$. If\/ $\,z\in Z$, while\/ 
$\,\mathrm{exp}\hh_z$ denotes the exponential mapping of\/ $\,g\,$ at\/ 
$\,z$, the function\/ $\,\phi\,$ in\/ {\rm(\ref{lvg})} has one of the 
properties\/ {\rm(\ref{cnd}.a)} -- {\rm(\ref{cnd}.b)}, and\/ $\,\,U\hs$ is a 
sufficiently small star-shap\-ed neighborhood of\/ $\,0\,$ in\/ $\,\tzm\,$ 
mapped by\/ $\,\mathrm{exp}\hh_z$ dif\-feo\-mor\-phi\-cal\-ly onto a 
neighborhood\/ $\,\,U'$ of\/ $\,z\,$ in\/ $\,M\nh$, then\/ 
$\,Z\cap U'\nh=\hh\mathrm{exp}\hh_z[\hh C\cap H\cap U\hh]\,$ for\/ 
$\,H=\hs\mathrm{Ker}\hskip1.7pt\nabla\nh v_z
\cap\hs\mathrm{Ker}\hskip2.2ptd\hh\phi_z\nh\subseteq\tzm\,$ and the null 
cone\/ $\,C=\{u\in\tzm:g_z(u,u)=0\hs\}$.
\end{theorem}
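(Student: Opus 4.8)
The plan is to analyze $\,v\,$ along the radial $\,g$-geodesics $\,\gamma_u(t)=\exp_z(tu)$, $\,u\in U$, so that the condition $\,\exp_z(u)\in Z\,$ becomes $\,v(\gamma_u(1))=0$. Writing $\,\nabla v=\frac{1}{2}\phi\,g+F\,$ with $\,F\,$ the skew part, I would first prolong (\ref{lvg}): one differentiation together with the Ricci identity expresses $\,\nabla F\,$ through the curvature, $\,v\,$ and $\,\mu:=\frac{1}{2}\nabla\phi$, and a second differentiation expresses $\,\nabla\mu\,$ through the Schouten tensor, $\,F$, $\,\phi\,$ and $\,v$, giving a closed first-order linear system for $\,(v,F,\phi,\mu)$. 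Restricting it to $\,\gamma_u$, where $\,T:=\dot\gamma_u\,$ is parallel and $\,g(T,T)=g_z(u,u)\,$ is constant, yields the conformal Jacobi equation
\[
\nabla_T\nabla_T v\,=\,(\nabla_T\phi)\,T\,-\,\frac{1}{2}\,g_z(u,u)\,\nabla\phi\,+\,R(v,T)\,T,
\]
with initial data $\,v=0$, $\,\nabla_T v=\nabla v_z(u)$, $\,\nabla_T\phi=d\phi_z(u)\,$ at $\,t=0$.

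For the inclusion $\,\exp_z[C\cap H\cap U]\subseteq Z\,$ I would observe that for $\,u\in C\cap H\,$ all three initial quantities vanish: $\,g_z(u,u)=0\,$ (so $\,g(T,T)\equiv0\,$ and the middle term drops), $\,\nabla v_z(u)=0$, and $\,d\phi_z(u)=0$. When $\,g(T,T)\equiv0\,$ the companion equation for $\,s:=\nabla_T\phi\,$ closes up — the explicit $\,\phi$-terms cancel once $\,\iota_TF=\nabla_T v-\frac{1}{2}\phi\,T\,$ is substituted — so $\,(v,\nabla_T v,s)\,$ satisfies a closed homogeneous linear system with zero initial data. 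Uniqueness then gives $\,v\equiv0\,$ along $\,\gamma_u$, hence $\,\exp_z(u)\in Z$; and since $\,C\cap H\,$ is a cone, the whole image lies in $\,Z\cap U'$.

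For the reverse inclusion I would first restrict to null directions. Pairing $\,\nabla_T v=\frac{1}{2}\phi\,T+\iota_TF\,$ with $\,T\,$ gives the first integral
\[
\frac{d}{dt}\,g(v,T)\,=\,\frac{1}{2}\,\phi\,g_z(u,u),\qquad g(v,T)\big|_{t=0}=0 .
\]
If $\,\exp_z(u)\in Z\,$ then $\,g(v,T)\,$ vanishes at both endpoints, so $\,g_z(u,u)\int_0^1\phi(\gamma_u)\,dt=0$. In case (\ref{cnd}.a), $\,\phi(z)\neq0$, so for $\,U\,$ small the integral is nonzero and hence $\,g_z(u,u)=0$: every zero is null, $\,Z\cap U'\subseteq\exp_z(C)$. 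Along null $\,u\,$ the closed system above again applies, and it remains to deduce $\,\nabla v_z(u)=0\,$ and $\,d\phi_z(u)=0$, i.e. $\,u\in H$.

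The step I expect to be the main obstacle is proving \emph{exactness} — that there are no isolated zeros off $\,\exp_z[C\cap H]\,$ — rather than merely matching tangent cones. The $\,2$-jet expansion
\[
v(\exp_z u)=\nabla v_z(u)+\frac{1}{2}\,d\phi_z(u)\,u-\frac{1}{4}\,g_z(u,u)\,\nabla\phi_z+O(|u|^3)
\]
places $\,C\cap H\,$ in the zero set to second order, and a Lyapunov--Schmidt reduction over $\,\ker\nabla v_z\,$ turns $\,v(\exp_z u)=0\,$ into a bifurcation equation governed by this quadratic term. That quadratic part, however, also vanishes along spurious null directions with $\,\nabla v_z(u)=-\frac{1}{2}d\phi_z(u)\,u\neq0$, which the leading order cannot exclude. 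Ruling these out requires the essential-zero hypotheses in full: in case (\ref{cnd}.a) one uses that $\,\nabla v_z=\frac{1}{2}\phi(z)\,\mathrm{Id}+F_z\,$ with $\,F_z\,$ skew, so that $\,\ker\nabla v_z\subseteq C$, while in case (\ref{cnd}.b) one uses the transversality $\,\nabla\phi_z\notin\nabla v_z(\tzm)$; combined with the exact geodesic equation beyond its $\,2$-jet, these should force $\,u\in H\,$ and complete the identification $\,Z\cap U'=\exp_z[C\cap H\cap U]$.
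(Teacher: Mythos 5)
Your first inclusion and your reduction to null directions are both sound, and they coincide with the paper's own steps: the closed homogeneous linear ODE system along null geodesics is exactly the paper's system (\ref{ndx}) (Lemma~\ref{zeros}(a), packaged as Lemma~\ref{cstdm}(a)), and your first integral $\,2\hh g(v,\dot x)\dot{\,}=\phi\,g(\dot x,\dot x)\,$ is (\ref{vdx}.i), used in Remark~\ref{phiez}. The genuine gap is exactly where you place it, and your closing paragraph is a diagnosis of the obstacle rather than a solution: the reverse inclusion $\,Z\cap U'\subseteq\mathrm{exp}\hh_z[\hh C\cap H\cap U\hh]\,$ is the substance of the theorem, and it ends with ``these should force $\,u\in H\,$'' without an argument. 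A Lyapunov--Schmidt reduction over $\,\mathrm{Ker}\,\nabla v_z\,$ by itself only reproduces what the paper obtains cheaply in Example~\ref{zrset} and Lemma~\ref{intsb}: all zeros near $\,z\,$ lie on an intermediate submanifold $\,N\,$ tangent to $\,\mathrm{Ker}\,\nabla v_z\,$ at $\,z$. The reduced (bifurcation) equation on $\,N\,$ has a quadratic part whose zero set is a cone, along which the quadratic map is nowhere submersive; consequently the zero set of the reduced equation is \emph{not} determined by its $2$-jet, and higher-order terms can in principle create zeros off the cone. Controlling that degeneracy is what the paper's entire technical apparatus exists for, and nothing in your sketch substitutes for it.

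Concretely, two mechanisms are missing. In case (\ref{cnd}.a), once each nearby zero is joined to $\,z\,$ by a null geodesic to which $\,v\,$ is tangent (Lemma~\ref{zeros}(b)), the direction of that geodesic is an eigenvector of $\,\nabla v_z$; the paper forces the eigenvalue to be $\,0\,$ by a limiting argument — a sequence of zeros approaching $\,z\,$ with nonzero eigenvalues $\,\lambda_x\,$ would have $\,\lambda_x\to0\,$ by Remark~\ref{submf}(ii), contradicting finiteness of the spectrum of $\,\nabla v_z\,$ (Section~\ref{ca}) — and then still needs Lemma~\ref{zrnlg}(ii) together with Lemma~\ref{nzero}(a) to obtain $\,d\phi_z(u)=0\,$ when $\,\mathrm{Ker}\,\nabla v_z\not\subseteq\mathrm{Ker}\hskip2.2ptd\hh\phi_z$. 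Your case (a) remark that $\,\mathrm{Ker}\,\nabla v_z\subseteq C\,$ (Lemma~\ref{krnll}(a)) is true but does not exclude the spurious eigen-directions you yourself identified. In case (\ref{cnd}.b) two separate steps are required, both absent from the proposal: first, that $\,\phi=0\,$ at \emph{every} zero near $\,z\,$ — this is Lemma~\ref{tadir}, whose proof needs the connecting-limit machinery of Section~\ref{cl}, Lemma~\ref{sbcnv}, and part (ii) of Theorem~\ref{tgdir}; second, that the zeros lying in $\,\phi^{-1}(0)\,$ fill exactly the quadric — this is Theorem~\ref{tgdir}(i), which rests on the Morse--Bott Lemma~\ref{mrsbt} applied to the scalar function $\,Q=2\hh g(w,v)\,$ (whose transverse Hessian is made nondegenerate by (\ref{nwn}.a), via Lemma~\ref{intsb}(iv)) and on the nested-quadric rigidity Lemma~\ref{ncleq}. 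None of this follows from the $2$-jet expansion plus the transversality $\,\nabla\phi_z\notin\nabla v_z(\tzm)\,$ alone; as written, the proposal proves only one of the two inclusions.
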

The paper is organized as follows. Sections~\ref{ms} --~\ref{is} contain 
preliminary material, including Theorem~\ref{tgdir} derived from the 
\hbox{Morse\hs-}\hskip0ptBott lemma (more on which below). The three lemmas in 
Section~\ref{cg}, which deal with the behavior of con\-for\-mal fields along 
null geodesics, are then used in Section~\ref{fi} to establish the relation 
$\,\mathrm{exp}\hh_z[\hh C\cap H\cap U\hh]\subseteq Z\cap U'\nnh$, one of the 
two opposite inclusions constituting the equality 
$\,Z\cap U'\nh=\hh\mathrm{exp}\hh_z[\hh C\cap H\cap U\hh]\,$ required in 
Theorem~\ref{nbzrs}. The proof of the remaining inclusion is split into 
Sections~\ref{ca} and~\ref{cb}, corresponding to two separate cases, 
(\ref{cnd}.a) and (\ref{cnd}.b). In the former, limiting properties of 
geodesic segments joining points of 
$\,\mathrm{exp}\hh_z[\hh C\cap H\cap U\hh]\,$ to other zeros of $\,v\,$ near 
$\,z\,$ are used to conclude that the other zeros cannot lie arbitrarily close 
to $\,z$. A similar argument provides a part of the proof in the latter case: 
phrased as Lemma~\ref{tadir}, it shows that nearby zeros at which 
$\,\phi\ne0\,$ would lead to connecting limits, in the sense of 
Section~\ref{cl}, for certain subsets of $\,Z\,$ near $\,z$, which are 
contained in $\,H$, but not in the null\-space of $\,H$. The final step is 
provided by Theorem~\ref{tgdir}, which states, first, that the existence of 
such connecting limits would contradict the algebraic structure of the second 
covariant derivative of $\,v\,$ at $\,z$, and, secondly, that nearby zeros 
with $\,\phi=0\,$ must all lie in $\,\mathrm{exp}\hh_z[\hh C\cap H\cap U\hh]$.

\renewcommand{\thetheorem}{\thesection.\arabic{theorem}}
\section{Manifolds and sub\-man\-i\-folds}\label{ms}
Unless stated otherwise, manifolds and sub\-man\-i\-folds are connected, 
sub\-man\-i\-folds carry the subset topology, while tensor fields and mappings 
are, by definition, of class $\,C^\infty\nnh$. By vec\-tor-val\-ued functions 
we mean mappings into vector spaces, with the latter always assumed to be 
fi\-\hbox{nite\hh-}\hskip0ptdi\-men\-sion\-al and real.

Given a vec\-tor-val\-ued function $\,\beta\,$ on $\,I\times K$, where $\,K\,$ 
is a manifold and $\,I\subset\bbR\,$ an interval containing $\,0$, the 
New\-ton-Leib\-niz formula and integration by parts yield
\begin{equation}\label{bei}
\begin{array}{rl}
\mathrm{i)}&\hskip3pt\beta(s,y)\,=\,\beta(0,y)\,
+\,s\int_0^1\beta_s^{\phantom i}(ts,y)\,dt\hs,\\
\mathrm{ii)}&\hskip3pt\beta(s,y)\,=\,\beta(0,y)\,
+\,\beta_s^{\phantom i}(0,y)\hs s\,
+\,s^2\int_0^1(1-t)\beta_{ss}^{\phantom i}(ts,y)\,dt
\end{array}
\end{equation}
for $\,s\in I\hs$ and $\,y\in K$, with 
$\,(\hskip2.3pt)_s^{\phantom i}\nh=\hh\partial/\partial\hh s$. In particular, 
$\,\beta\,$ is smoothly divisible by the projection function 
$\,(s,y)\mapsto s\,$ if $\,\beta=0\,$ whenever $\,s=0$. Similarly, for a 
vec\-tor-val\-ued function $\,\varPsi:U\to\mathcal{T}\hs$ on a neighborhood 
$\,\,U\,$ of a point $\,z\,$ in a vector space $\,W\nnh$, and any $\,x,y\,$ 
near $\,z\,$ in $\,W\nnh$, integrating $\,d\hs[\hh\varPsi(x+t(y-x))]/dt\,$ 
from $\,t=0\,$ to $\,t=1\,$ we obtain
\begin{equation}\label{fwe}
\varPsi(y)\,-\,\varPsi(x)\,=\,D\nh_{(x,y)}^{\phantom i}(y-x)\hs,\hskip18pt
\mathrm{with}\hskip9ptD_{(x,x)}^{\phantom i}\hs
=\,d\hs\varPsi_x^{\phantom i}\hh,
\end{equation}
where the function 
$\,(x,y)\mapsto 
D\nh_{(x,y)}^{\phantom i}\in\mathrm{Hom}\hs(W\nnh,\mathcal{T})\,$ is given by 
$\,D\nh_{(x,y)}^{\phantom i}
=\int_0^1d\hs\varPsi\nnh_{x+t(y-x)}^{\phantom i}\hskip1.7ptdt$.
\begin{lemma}\label{nzero}Let a vec\-tor-val\-ued function\/ $\,\beta\hh$ on 
a manifold $\,N$ vanish at all points of a 
co\-di\-men\-\hbox{sion\hh-}\hskip.7ptone sub\-man\-i\-fold\/ $\,K$.
\begin{enumerate}
  \def\theenumi{{\rm\alph{enumi}}}
\item[{\rm(a)}] If\/ $\,z\in K\,$ and\/ $\,d\hh\beta_z\nh\ne0$, then\/ 
$\,z\,$ has a neighborhood $\,\,U\,$ in $\,N$ such that\/ $\,\beta\ne0\,$ 
everywhere in\/ $\,\,U\smallsetminus K\nnh$.
\item[{\rm(b)}] If\/ $\,d\hh\beta\ne0\,$ everywhere in a set\/ 
$\,\varXi\subseteq K$, then for some open subset\/ $\,\,U\,$ of\/ $\,N$ 
containing\/ $\,\varXi\hh$ we have\/ $\,\beta\ne0\,$ at all points of\/ 
$\,\,U\smallsetminus K\nnh$.
\end{enumerate}
\end{lemma}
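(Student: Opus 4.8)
The plan is to realize $\beta$ locally as the product of a defining function of $K$ with a smooth vector-valued ``quotient,'' and then read off both the hypothesis and the conclusion from that quotient. First, to prove (a), I would fix $z\in K$ and choose a coordinate chart around $z$ in which $K$ is the coordinate hyperplane $\{s=0\}$, with $s$ the last coordinate function and $y$ the remaining ones; the chart domain is thereby identified with a product $I\times V$ on which $K$ corresponds to $\{0\}\times V$. Since $\beta$ vanishes on $K$, i.e.\ whenever $s=0$, the divisibility remark following (\ref{bei}) supplies a smooth $\mathcal{T}$-valued function $\gamma$, namely $\gamma(s,y)=\int_0^1\beta_s(ts,y)\,dt$, such that $\beta=s\hs\gamma$ throughout the chart.

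Next I would differentiate this factorization. For any tangent vector $X$ one has $d\beta(X)=(ds(X))\hs\gamma+s\hs d\gamma(X)$, so at the point $z$, where $s=0$, everything collapses to $d\beta_z(X)=(ds_z(X))\hs\gamma(z)$; in other words $d\beta_z$ is the map $X\mapsto(ds_z(X))\hs\gamma(z)$. Since $ds_z\ne0$, the hypothesis $d\beta_z\ne0$ is then equivalent to $\gamma(z)\ne0$. Assuming the latter, continuity of $\gamma$ yields a neighborhood $U$ of $z$ on which $\gamma$ is nowhere zero. For any $p\in U\smallsetminus K$ one has both $s(p)\ne0$ and $\gamma(p)\ne0$, and since a nonzero scalar times a nonzero vector is nonzero, $\beta(p)=s(p)\hs\gamma(p)\ne0$. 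This gives (a).

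Part (b) then follows by a covering argument. For each $w\in\varXi$ the hypothesis $d\beta_w\ne0$ lets me invoke (a) to obtain a neighborhood $U_w$ of $w$ with $\beta\ne0$ throughout $U_w\smallsetminus K$; setting $U=\bigcup_{w\in\varXi}U_w$ produces an open set containing $\varXi$, and since $U\smallsetminus K=\bigcup_{w}(U_w\smallsetminus K)$, the nonvanishing of $\beta$ on each $U_w\smallsetminus K$ gives $\beta\ne0$ everywhere in $U\smallsetminus K$.

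The only delicate point is the opening reduction: the divisibility statement attached to (\ref{bei}) is phrased for a product domain, so one must first appeal to the local normal form of a codimension-one submanifold to flatten $K$ into a coordinate hyperplane before the factorization $\beta=s\hs\gamma$ is available. Once that chart is in place the argument is elementary, the crux being the identity $d\beta_z(X)=(ds_z(X))\hs\gamma(z)$, which converts the differential hypothesis into the pointwise nonvanishing of $\gamma$.
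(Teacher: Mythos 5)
Your proposal is correct and follows essentially the same route as the paper's proof: both flatten $K$ locally into $\{0\}\times K$ (equivalently, a coordinate hyperplane $\{s=0\}$), factor $\beta=s\hs\gamma$ with $\gamma(s,y)=\int_0^1\beta_s^{\phantom i}(ts,y)\,dt$ via (\ref{bei}.i), deduce $\gamma(z)\ne0$ from $d\hh\beta_z\ne0$, and conclude by continuity, with (b) obtained from (a) by a union of neighborhoods. Your explicit product-rule identity $d\hh\beta_z(X)=(ds_z(X))\hs\gamma(z)$ merely spells out the step the paper states tersely as $\beta_s^{\phantom i}(0,z)\ne0$.
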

\begin{proof}Let us replace $\,K\hs$ with a smaller neighborhood of $\,z\,$ in 
$\,K$, if necessary, so as to identify a neighborhood of $\,z\,$ in 
$\,N\hs$ with $\,I\times K\,$ and $\,K\,$ with $\,\{0\}\times K$, for $\,I\,$ 
as in (\ref{bei}). Since $\,d\hh\beta_z\nh\ne0$, we have 
$\,\int_0^1\beta_s^{\phantom i}(0,y)\,dt=\beta_s^{\phantom i}(0,y)\ne0\,$ in 
(\ref{bei}.i), for $\,y=z$. This yields (a), while (a) obviously implies (b).
\end{proof}
Given a sub\-man\-i\-fold $\,K\,$ of a manifold $\,M\nh$, the {\it normal 
bundle\/} of $\,K\,$ is defined, as usual, to be the quotient vector bundle 
$\,\mathcal{N}=T\hskip-2.7pt_K^{\phantom i}\nnh M/T\nh K$, where 
$\,T\hskip-2.7pt_K^{\phantom i}\nnh M\,$ is the restriction 
of $\,\tm\,$ to $\,K$. A fixed tor\-sion\-free connection $\,\nabla\,$ on 
$\,M\,$ then gives rise to the {\it second fundamental form\/} of $\,K$, which 
is a section $\,b\,$ of $\,\mathrm{Hom}\hs([\tm]^{\odot2}\nnh,\mathcal{N})
=[\tam]^{\odot2}\nnh\otimes\mathcal{N}\,$ (in other words, 
$\,b_x:T_xK\times T_xK\to\mathcal{N}\nnh_x$ is, at every $\,x\in K$, 
bi\-lin\-e\-ar and symmetric). We have 
$\,b(\dot x,\dot x)=\pi\nabla_{\!\dot x}\dot x\,$ whenever $\,t\mapsto x(t)\,$ is a curve in $\,K$, with $\,\pi:\tm\to\mathcal{N}\,$ denoting the quotient projection.
\begin{lemma}\label{tgapt}Let\/ $\,b\,$ be the second fundamental form of a 
sub\-man\-i\-fold\/ $\,K\,$ in a manifold\/ $\,M\,$ endowed with a 
tor\-sion\-free connection $\,\nabla\nnh$.
\begin{enumerate}
  \def\theenumi{{\rm\roman{enumi}}}
\item[{\rm(i)}] $b(\dot x,\dot x)=0\,$ along any geodesic\/ 
$\,t\mapsto x(t)\,$ of\/ $\,\nabla\hs$ which is  contained in\/ $\,K$.
\item[{\rm(ii)}] If\/ $\,z\in M\nh$, a neighborhood\/ $\,\,U\,$ of\/ 
$\,0\,$ in\/ $\,\tzm\,$ is mapped by\/ $\,\mathrm{exp}\hh_z$ 
dif\-feo\-mor\-phi\-cal\-ly onto a neighborhood of\/ $\,z\,$ in\/ $\,M\nh$, 
and\/ $\,K\hs\,=\,\hs\mathrm{exp}\hh_z[\hh V\nh\cap U\hh]\,$ for a vector 
subspace\/ $\,V$ of\/ $\,\tzm\nh$, then $\,b_z\nh=0$.
\end{enumerate}
\end{lemma}
\begin{proof}Assertion (i) is obvious from the formula 
$\,b(\dot x,\dot x)=\pi\nabla_{\!\dot x}\dot x$, and (ii) from (i) for all the 
geodesics $\,x(t)=\mathrm{exp}\hh_z\hs tu\,$ with $\,u\in V\nnh$.
\end{proof}
When $\,b=0\,$ identically, $\,K\,$ is said to be {\it totally geodesic\/} 
relative to $\,\nabla\nnh$. If $\,\nabla\,$ is the Le\-vi-Ci\-vi\-ta 
connection of a pseu\-\hbox{do\hs-}\hskip0ptRiem\-ann\-i\-an metric $\,g\,$ on 
$\,M\,$ and $\,b=g_{\nh K}^{\phantom i}\nh\otimes u\,$ for some section $\,u\,$ 
of $\,\mathcal{N}\nh$. where $\,g_{\nh K}^{\phantom i}$ is the restriction of 
$\,g\,$ to $\,K$, one calls $\,K\,$ {\it totally umbilical\/} in $\,(M,g)$. 
This last property of $\,K\,$ is con\-for\-mal\-ly invariant, since
\begin{equation}\label{inv}
\mathrm{changing\ }\,g\,\mathrm{\ to\ }\,e^{-\tau}\hskip-2ptg\,
\mathrm{\ causes\ }\,b\,\mathrm{\ to\ be\ replaced\ by\ }
\,b\hskip1pt+g_{\nh K}^{\phantom i}\nh\otimes\pi\nabla\nh\tau/2\hh.
\end{equation}
A null sub\-man\-i\-fold of a pseu\-\hbox{do\hs-}\hskip0ptRiem\-ann\-i\-an 
manifold $\,(M,g)\,$ is totally umbilical if and only if it is totally 
geodesic. The class of (un\-pa\-ram\-e\-trized) null geodesics in $\,(M,g)\,$ 
is therefore a con\-for\-mal invariant.

\section{Differentials and Hess\-i\-ans}\label{dh}
As before, vec\-tor-val\-ued functions are mappings into 
fi\-\hbox{nite\hh-}\hskip0ptdi\-men\-sion\-al real vector spaces. For a fixed 
curve $\,t\mapsto x(t)\,$ in a manifold $\,M\,$ and a vec\-tor-val\-ued 
function $\,f\,$ on $\,M\nh$, we write
\begin{equation}\label{dfe}
\hskip2.8pt\dot{\hskip-3.3ptf}\,=\,d\hh[\hh f(x(t))]/dt\hs,\hskip12pt
\hskip3.3pt\ddot{\hskip-3.3ptf}\,=\,d^{\hs2}\hh[\hh f(x(t))]/dt^2.
\end{equation}
Given a vector bundle $\,\mathcal{E}\,$ over a manifold $\,M$, a section 
$\,\psi\,$ of $\,\mathcal{E}\,$ defined on an open set $\,\,U\subseteq M\nh$, 
and a point $\,z\in U\,$ at which $\,\psi_z=\hh0$, the {\it differential\/} 
of $\,\psi\,$ at $\,z\,$ is the linear operator 
$\,\partial\hh\psi_z:\tzm\to\mathcal{E}_z$ arising as the composition of 
the ordinary differential $\,d\psi_z:\tzm\to T_z\hh\mathcal{E}\,$ and the 
projection $\,T_z\hh\mathcal{E}\to\mathcal{E}_z$ coming from the natural 
identification $\,T_z\hh\mathcal{E}\,\approx\,\tzm\oplus\mathcal{E}_z$. (Here 
$\,z\in M\subseteq\mathcal{E}$, with $\,M\,$ treated as the zero section 
embedded in the total space $\,\mathcal{E}$, and $\,\psi\,$ viewed as a 
mapping $\,M\to\mathcal{E}$.) The components of $\,\partial\hh\psi_z$ 
relative to fixed local coordinates and a local trivialization of 
$\,\mathcal{E}$, defined around $\,z$, are $\,\partial_j\psi^a\nh$, 
so that $\,\partial\hh\psi_z=\nabla\psi_z$ for any connection $\,\nabla\,$ in 
$\,\mathcal{E}$.
\begin{example}\label{zrset}For $\,\mathcal{E}\nh,M\nh,\psi,\,U\,$ and $\,z\,$ 
as above, with $\,\psi_z=\hh0$, let $\,n=\dim M\,$ and 
$\,r=\hs\mathrm{rank}\hskip2.9pt\partial\hh\psi_z$. Then all zeros of 
$\,\psi\,$ near $\,z\,$ lie in some 
\hbox{$\,(n-r)\hh$-}\hskip0ptdi\-men\-sion\-al sub\-man\-i\-fold $\,N\hs$ of 
$\,M\,$ having the property that 
$\,\mathrm{Ker}\hskip2.2pt\partial\hh\psi_x\subseteq\txn\hs$ whenever 
$\,x\in N\hs$ and $\,\psi_x=\hh0$.

To construct such $\,N\nh$, we may start with an 
\hbox{$\,r$-}\hskip0ptdi\-men\-sion\-al real vector space $\,W\hs$ and a 
base-pre\-serv\-ing bundle mor\-phism $\,G\,$ from $\,\mathcal{E}\,$ into the 
product vector bundle $\,M\times W\nnh$. In other words, $\,G\,$ is a 
$\,W\nnh$-val\-ued $\,C^\infty$ function on the total space $\,\mathcal{E}\,$ 
and its restriction $\,G_x$ to the fibre $\,\mathcal{E}_x$ is linear for 
each $\,x\in M\nh$. We now choose $\,G\,$ so that $\,G_z$ sends the image 
$\,\partial\hh\psi_z(\tzm)\,$ iso\-mor\-phi\-cal\-ly onto $\,W\nnh$. The 
mapping $\,F:U\to W\hs$ defined by $\,F(x)=G_x\psi_x$ has, at any $\,x\in U\,$ 
with $\,\psi_x=\hh0$, the differential 
$\,d\hskip-.8ptF\hskip-1.7pt_x\nh=G_x\nh\circ\partial\hh\psi_x$. Applied to 
$\,x=z$, this shows that $\,F\,$ is a sub\-mer\-sion at $\,z\,$ and, making 
$\,\,U\,$ smaller if necessary, we can simply set 
$\,N\nh=\hs U\nh\cap\hh F^{-1}(0)$.
\end{example}
\begin{example}\label{tgctg}(a)\hskip6ptIn the case where 
$\,\mathcal{E}=\tm\,$ and $\,z\in M\,$ is a zero of a vector field $\,v\,$ 
defined on a neighborhood of $\,z$, the differential $\,\partial\hh v_z$ 
obviously coincides with the infinitesimal generator of the local flow of 
$\,v\,$ acting in $\,\tzm$.

(b)\hskip6ptIf $\,f:M\to W\hs$ is a vec\-tor-val\-ued function on a manifold 
$\,M\,$ and $\,d\hskip-.8ptf_{\nh z}\nh=0\,$ at a point $\,z\in M\nh$, the 
differential $\,\partial\hs d\hskip-.8ptf_{\nh z}$ of $\,d\hskip-.8ptf\,$ at 
$\,z\,$ is nothing else than the Hess\-i\-an of $\,f\,$ at $\,z$. Here 
$\,d\hskip-.8ptf\,$ is a section of the bundle $\,\mathcal{E}\hs$ of 
$\,W\nnh$-val\-ued $\,1$-forms on $\,M\nh$. We will use the fact that, for a 
curve $\,t\mapsto x(t)\,$ in $\,M\,$ with $\,x(0)=z$,
\begin{equation}\label{hss}
\hskip3.3pt\ddot{\hskip-3.3ptf}(0)\,\,
=\,\,\partial\hs d\hskip-.8ptf_{\nh z}(u,u)\hskip8pt\mathrm{(notation\ of\ 
(\ref{dfe})),}\hskip12pt\mathrm{where}\hskip6ptu\hs=\hs\dot x(0)\hs.
\end{equation}
\end{example}
\begin{remark}\label{hessn}Let $\,N\hs$ be the sub\-man\-i\-fold constructed 
in Example~\ref{zrset}, with the corresponding 
$\,\mathcal{E}\nh,M\nh,\psi,\,U\nh,z,n,r,G\,$ and $\,F\nnh$. Suppose that, 
in addition, $\,\mathcal{E}\,$ and $\,\tm\,$ are endowed with connections, 
of which the latter is tor\-sion\-free, and both are denoted by 
$\,\nabla\nnh$, while $\,\xi\,$ is a fixed section of the dual bundle 
$\,\mathcal{E}\hh^*\nnh$, and the function $\,Q:N\to\bbR\,$ is defined to be 
the restriction of $\,\xi(\psi)\,$ to $\,N\nnh$. Then
\begin{enumerate}
  \def\theenumi{{\rm\roman{enumi}}}
\item[{\rm(i)}] $d\hh Q\nh_x\nh=0\,$ at every $\,x\in N\hs$ with 
$\,\psi_x=\hh0\,$ and $\,\mathrm{rank}\hskip2.9pt\partial\hh\psi_x\nh=r\nh$, 
including $\,x=z$,
\item[{\rm(ii)}] in the case where $\,\xi_z$ vanishes on the image 
$\,\partial\hh\psi_z(\tzm)\,$ (that is, 
$\,\partial\hh\psi_z(\tzm)\subseteq\hs\mathrm{Ker}\hskip2.2pt\xi_z$), the 
Hess\-i\-an of $\,Q\,$ at $\,z\,$ is given by 
$\,\partial\hs d\hh Q_z(u,u)=\xi(\theta(u,u))\,$ for all 
$\,u\in\tzn\nh=\hs\mathrm{Ker}\hskip2.2pt\partial\hh\psi_z$, with 
$\,\theta\,$ denoting the second covariant derivative of $\,\psi\,$ relative 
to the two connections and their tensor product: 
$\,\theta(u,w)=[\nabla_{\!u}(\nabla\psi)]w\,$ whenever $\,u,w\in\tzm$.
\end{enumerate}
In fact, for a curve $\,t\mapsto x(t)\,$ in $\,N\hs$ we have 
$\,\dot Q=(\xi_a\psi^a)\dot{\,}=\dot x^{\hs j}(\xi_{a,\hh j}\psi^a\nh
+\xi_a\psi^a{}_{,\hh j}),$ (notation of (\ref{dfe}) and the lines preceding 
Example~\ref{zrset}, with commas standing for covariant derivatives). This 
gives (i), since for all $\,x\,$ in question the inclusion 
$\,\mathrm{Ker}\hskip2.2pt\partial\hh\psi_x\subseteq\txn\hs$ is an equality. 
If $\,x(0)=z\,$ and $\,\dot x(0)=u$, differentiating covariantly with respect 
to $\,t\,$ once again, at $\,t=0$, we obtain, from (\ref{hss}), 
$\,\partial\hs d\hh Q_z(u,u)=u^{\hs j}u^{\hs k}\xi_a(z)\psi^a{}_{,\hh jk}(z)$, 
as required; note that 
$\,\xi_a\psi^a{}_{,\hh j}\nh=0\,$ at $\,t=0\,$ 
(since $\,\partial\hh\psi_z(\tzm)\subseteq\hs\mathrm{Ker}\hskip2.2pt\xi_z$), 
and $\,\psi^a{}\nnh_{,\hs jk}\nh=\theta^{\hh a}_{kj}$.
\end{remark}

\section{Nor\-mal-co\-or\-di\-nate neighborhoods and rigid geodesics}\label{nc}
As before, a subset of a vector space is said to be {\it star-shap\-ed\/} if 
it is a union of line segments emanating from $\,0$.

For a fixed connection $\,\nabla$ on a manifold $\,M\nh$, a nontrivial 
$\,\nabla\nnh$-ge\-o\-des\-ic segment $\,\varGamma\hs$ with endpoints 
$\,y,x\,$ will be called {\it rigid\/} if there exists an open subset 
$\,\,U'$ of $\,M\,$ containing $\,\varGamma\hs$ such that $\,\varGamma\hs$ 
is the unique geodesic segment in $\,\,U'$ joining $\,y\,$ to $\,x$. By a {\it 
nor\-mal-co\-or\-di\-nate neighborhood\/} of a point $\,z\in M\,$ we mean any 
open set $\,\,U'\nh\subseteq M\,$ which is the 
$\,\mathrm{exp}\hh_z$-dif\-feo\-mor\-phic image of a star-shap\-ed 
neighborhood $\,\,U\,$ of $\,0\,$ in $\,\tzm\nh$, with $\,\mathrm{exp}\hh_z$ 
denoting the exponential mapping of $\,\nabla$ at $\,z$. Each point 
$\,x\in U'$ then is joined to $\,z\,$ by a unique 
$\,\nabla\nnh$-ge\-o\-des\-ic segment $\,\varGamma\hs$ contained in 
$\,\,U'$ (and so $\,\varGamma\hs$ is rigid).

Let $\,\tm\,$ be the total space of the tangent bundle of $\,M\nh$. Our 
convention is that, as a set, 
$\,\tm=\{(x,w):x\in M\hskip5pt\mathrm{and}\hskip5pt w\in\txm\}$. We identify 
$\,M\,$ and each tangent space $\,\txm\nh$, in the standard way, with subsets 
of $\,\tm\,$ (the zero section and the fibre $\,\{x\}\times\txm$), and say 
that a subset of $\,\tm\,$ is {\it radial\/} if its intersection with each 
$\,\txm\,$ is a (possibly empty) star-shaped set in $\,\txm\nh$. For a fixed 
connection $\,\nabla$ on $\,M\nh$, the formula 
$\,\mathrm{Exp}\hh(x,w)=(x,\hh\mathrm{exp}\hh_z\hs w)\,$ defines a mapping 
from a radial open sub\-man\-i\-fold of $\,\tm\nh$, containing the zero 
section, into $\,M\times M\nh$. In view of the inverse mapping theorem, 
$\,\mathrm{Exp}\,$ restricted to a suitable radial neighborhood 
$\,\varOmega\,$ of any point $\,(z,0)\,$ in the zero section is a 
dif\-feo\-mor\-phism onto a neighborhood $\,\varOmega\hh'$ of $\,(z,z)\,$ in 
$\,M\times M\nh$. We will call a nor\-mal-co\-or\-di\-nate neighborhood 
$\,\,U'$ of $\,z\,$ {\it sub\-con\-vex\/} if 
$\,\,U'\nnh\times U'\nh\subseteq\varOmega\hh'$ for some such $\,\varOmega\,$ 
and $\,\varOmega\hh'\nnh$. More precisely, we will treat $\,\varOmega\,$ as a 
``part of the structure'' of the sub\-con\-vex nor\-mal-co\-or\-di\-nate 
neighborhood $\,\,U'$ of $\,z$, so that, whenever 
$\,(x,y)=\mathrm{Exp}\hh(x,w)\in U'\nnh\times U'$ with $\,(x,w)\in\varOmega$, 
we may refer to the curve 
$\,[\hs0,1\hh]\ni t\mapsto\mathrm{Exp}\hh(x,tw)\,$ as {\it the\/} rigid 
geodesic segment in $\,M\,$ joining $\,x\,$ to $\,y$.
\begin{remark}\label{nlseg}If $\,\nabla\,$ is the Le\-vi-Ci\-vi\-ta connection 
of a pseu\-\hbox{do\hs-}\hskip0ptRiem\-ann\-i\-an metric $\,g\,$ on $\,M\nh$, 
the class of null geodesic segments in $\,(M,g)\,$ depends only on the 
underlying con\-for\-mal structure of $\,g$. (See the end of Section~\ref{ms}.) 
On the other hand, for any con\-for\-mal vector field $\,v\,$ on $\,(M,g)$, 
the local flow of $\,v\,$ consists of con\-for\-mal dif\-feo\-mor\-phisms. 
Consequently, if $\,v=0\,$ at both endpoints of a rigid nontrivial null 
geodesic segment $\,\varGamma\hs$ in $\,(M,g)$, then, due to uniqueness of 
$\,\varGamma\nnh$, the local flow of $\,v\,$ maps $\,\varGamma\,$ into itself.
\end{remark}

\section{Connecting limits and radial limit directions}\label{cl}
Suppose that $\,M\,$ is a manifold, $\,z\in M\nh$, and $\,L\,$ is a line 
through $\,0\,$ in $\,\tzm\nh$. Let us also fix a norm $\,|\hskip3pt|\,$ in 
$\,\tzm\,$ and a neighborhood $\,\,U\,$ of $\,0\,$ in $\,\tzm\,$ along with 
a dif\-feo\-mor\-phism $\,\varPsi:U\to U'$ onto a neighborhood $\,\,U'$ of 
$\,z\,$ in $\,M\,$ such that $\,\varPsi(0)=z\,$ and 
$\,d\hs\varPsi_0^{\phantom i}=\hh\mathrm{Id}:\tzn\to\tzn\nnh$. We call $\,L\,$ 
a {\it connecting limit\/} for a pair of sequences $\,x_j,y_j\in M$, 
$\,j=1,2,\ldots\hs$, both converging to $\,z\,$ and having 
$\,x_j\nh\ne y_j$ whenever $\,j\,$ is  sufficiently large, if, for all but 
finitely many $\,j$, and $\,u_j,w_j\nh\in U\,$ such that $\,\varPsi(u_j)=x_j$, 
$\,\varPsi(w_j)=y_j$, the limit of the sequence 
$\,(w_j\nh-u_j)/|\hh w_j\nh-u_j|\,$ exists and spans $\,L$.

For such $\,M\nh,z\,$ and $\,x_j,y_j$, neither $\,L\,$ itself nor the fact of 
its existence depends on the choice of $\,|\hskip3pt|\,$ and $\,\varPsi$. This 
is obvious for $\,|\hskip3pt|$, and for $\,\varPsi\,$ it amounts to the 
following claim: {\it if, in addition, $\,M\,$ is a neighborhood of\/ 
$\,z=0\,$ in a vector space\/ $\,W\nnh$, so that\/ $\,\tzm\nh=W\nnh$, and 
$\,(y_j\nh-x_j)/|y_j\nh-x_j|\to u\in W\hs$ as $\,j\to\infty$, then, for any 
dif\-feo\-mor\-phism\/ $\,\varPsi\,$ with the stated properties, 
$\,[\hh\varPsi(y_j)-\varPsi(x_j)]/|\hh\varPsi(y_j)-\varPsi(x_j)|\to u$.}

To verify the italicized statement, note that, for $\,x,y\,$ near $\,0\,$ in 
$\,W\nnh$, if one writes $\,x=x_j$, $\,y=y_j$, the assumption that 
$\,(y-x)/|\hh y-x|\to u\,$ gives, by (\ref{fwe}), 
$\,[\hh\varPsi(y)-\varPsi(x)]/|\hh y-x|
=D\nh_{(x,y)}^{\phantom i}[(y-x)/|\hh y-x|]\to D_{(0,0)}^{\phantom i}u=u$. 
Since $\,[\hh\varPsi(y)-\varPsi(x)]/|\hh y-x|\,$ tends to the 
$\,|\hskip3pt|$-unit vector $\,u$, so does the sequence 
$\,[\hh\varPsi(y)-\varPsi(x)]/|\hh\varPsi(y)-\varPsi(x)|\,$ obtained by 
normalizing $\,[\hh\varPsi(y)-\varPsi(x)]/|\hh y-x|$, as required.
\begin{remark}\label{submf}Given $\,M\nh,z,L\,$ as above, let 
$\,L\subset\tzm\,$ be the connecting limit for a pair of sequences 
$\,x_j,y_j$ with $\,x_j\nh\ne y_j$, converging to $\,z$. Then
\begin{enumerate}
  \def\theenumi{{\rm\alph{enumi}}}
\item[{\rm(i)}] $L\subseteq\tzn$ if $\,N$ is a sub\-man\-i\-fold of 
$\,M\hs$ and $\,x_j,y_j\in N$ for all $\,j$,
\item[{\rm(ii)}] $L\subseteq\mathrm{Ker}\hskip2.2pt\partial\hh\psi_z\,$ 
whenever $\,\psi(x_j)=\psi(y_j)=0\,$ for all $\,j\,$ and some section 
$\,\psi\,$ of a vector bundle over $\,M\nh$.
\end{enumerate}
In fact, we obtain (ii) by identifying a neighborhood $\,\,U'$ of $\,z\,$ in 
$\,M\,$ with a neighborhood of $\,z=0\,$ in the vector space $\,W\nnh=\tzm\,$ 
and trivializing the bundle over $\,\,U'\nnh$, so that $\,\psi\,$ becomes a 
vec\-tor-val\-ued function $\,(\psi^1\nnh,\dots,\psi^q):U'\nh\to\bbR^q\nnh$. 
Vanishing of each $\,\psi^a$, $\,a=1,\dots,q$, at both points $\,x_j,y_j$ 
implies that $\,\dot x_j(t^a_j)\in\mathrm{Ker}\hskip2.2ptd\hh\psi^a_y$, where 
$\,x_j(t)=x_j\nh+t\hh(y_j\nh-x_j)\,$ and $\,y=x_j(t^a_j)\,$ for each fixed 
$\,a\,$ and some sequence $\,t^a_j\nh\in(0,1)$, $\,j=1,2,\ldots\hs$. The 
convergence relation $\,\bbR(y_j\nh-x_j)\to L\,$ now yields 
$\,L\subseteq\mathrm{Ker}\hskip2.2ptd\hh\psi^a_z$ for each $\,a$, and 
(ii) follows. Now (ii) yields (i), since $\,N\nh=\psi^{-1}(0)\,$ for a 
vec\-tor-val\-ued function $\,\psi\,$ which is a submersion onto a 
neighborhood of $\,0\,$ in a vector space.
\end{remark}
In the following lemma, which will be needed in Section~\ref{cz}, convergence 
of tangent directions refers to the appropriate Gras\-mann\-i\-an bundle, and 
can also be interpreted as convergence in $\,\tm\,$ of suitably normalized 
spanning vectors. For the definitions of sub\-con\-vex\-i\-ty and rigidity, 
see Section~\ref{nc}.
\begin{lemma}\label{sbcnv}Suppose that\/ $\,\,U'$ is a sub\-con\-vex 
nor\-mal-co\-or\-di\-nate neighborhood of a point\/ $\,z\,$ in a manifold\/ 
$\,M\,$ with a connection\/ $\,\nabla\nnh$, a line\/ $\,L\,$ through\/ 
$\,0\,$ in\/ $\,\tzm\hs$ is the connecting limit for a pair of sequences 
$\,x_j,y_j\in U'\nnh$, $\,j=1,2,\ldots\hs$, both converging to $\,z$, with\/ 
$\,x_j\nh\ne y_j$ for all\/ $\,j$, and\/ $\,[\hs0,1\hh]\ni t\mapsto x_j(t)\,$ 
denotes the rigid\/ $\,\nabla\nnh$-ge\-o\-des\-ic segment joining\/ 
$\,x_j$ to\/ $\,y_j$ in\/ $\,\,U'\nnh$. Then\/ $\,x_j(t)\to z\,$ and\/ 
$\,\bbR\dot x_j(t)\to L\,$ as\/ $\,j\to\infty$, uniformly in\/ 
$\,t\in[\hs0,1\hh]$.
\end{lemma}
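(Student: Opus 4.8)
The plan is to realize the rigid geodesics explicitly through the map $\,\mathrm{Exp}\,$ of Section~\ref{nc} and then to control their velocities by a rescaling argument in normal coordinates centred at $\,z$.

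First I would use sub\-con\-vex\-i\-ty. Since $\,(x_j,y_j)\in U'\times U'\subseteq\varOmega'$ and $\,\mathrm{Exp}:\varOmega\to\varOmega'$ is a dif\-feo\-mor\-phism with $\,\mathrm{Exp}(z,0)=(z,z)$, the pairs $\,(x_j,w_j)=\mathrm{Exp}^{-1}(x_j,y_j)\,$ satisfy $\,(x_j,w_j)\to(z,0)\,$ in $\,\tm$; in particular the initial velocities $\,w_j\in T_{x_j}M\,$ tend to $\,0$. The rigid geodesic joining $\,x_j\,$ to $\,y_j\,$ is $\,x_j(t)=\mathrm{exp}_{x_j}(t\hh w_j)$, and since $\,x_j\ne y_j\,$ forces $\,w_j\ne0$, its velocity never vanishes, so $\,\bbR\dot x_j(t)\,$ is a well-defined line for every $\,j,t$. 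Because $\,(x_j,t\hh w_j)\to(z,0)\,$ uniformly in $\,t\in[\hs0,1\hh]\,$ (as $\,|t\hh w_j|\le|w_j|\to0$) and the exponential map is continuous, hence uniformly continuous on a compact neighbourhood of $\,(z,0)$, I obtain the first assertion: $\,x_j(t)\to z\,$ uniformly in $\,t$.

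For the directions I would pass to normal coordinates, using $\,\mathrm{exp}_z^{-1}\,$ to identify $\,U'$ with a neighbourhood of $\,0\,$ in $\,\tzm$; write $\,a_j=\mathrm{exp}_z^{-1}(x_j)$, $\,b_j=\mathrm{exp}_z^{-1}(y_j)\,$ and $\,\tilde x_j=\mathrm{exp}_z^{-1}\!\circ x_j$. As the connecting limit is independent of the chart, choosing $\,\varPsi=\mathrm{exp}_z\,$ yields a unit spanning vector $\,e\,$ of $\,L\,$ with $\,(b_j-a_j)/|b_j-a_j|\to e$. In these coordinates the Christoffel symbols $\,\Gamma^i_{jk}\,$ vanish at $\,0$, so along $\,\tilde x_j$, which satisfies $\,\ddot{\tilde x}_j=-\hh\Gamma(\tilde x_j)(\dot{\tilde x}_j,\dot{\tilde x}_j)$, the quantity $\,\Gamma_j^*=\sup_t\|\Gamma(\tilde x_j(t))\|\to0\,$ by the uniform convergence $\,\tilde x_j(t)\to0\,$ already obtained. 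Setting $\,\ve_j=|b_j-a_j|\to0\,$ I then rescale, $\,\xi_j(t)=(\tilde x_j(t)-a_j)/\ve_j$, so that $\,\xi_j(0)=0$, $\,\xi_j(1)\to e$, and $\,\ddot\xi_j=-\ve_j\hh\Gamma(\tilde x_j)(\dot\xi_j,\dot\xi_j)$.

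The heart of the matter is to bound the velocities so that this rescaled acceleration is negligible. A nonlinear Gron\-wall (Bihari) comparison applied to $\,V_j(t)=|\dot{\tilde x}_j(t)|\le V_j(0)+\Gamma_j^*\int_0^tV_j(s)^2\,ds$, together with $\,V_j(0)\to0\,$ (as $\,\dot{\tilde x}_j(0)\,$ is the image of $\,w_j\to0\,$ under a differential tending to the identity), yields $\,\sup_tV_j(t)\le2\hh V_j(0)\,$ for large $\,j$. Feeding this into $\,b_j-a_j=\dot{\tilde x}_j(0)+O(\Gamma_j^*V_j(0)^2)\,$ and using $\,\Gamma_j^*V_j(0)\to0\,$ gives $\,V_j(0)\le2\ve_j$, so the speed is comparable to the endpoint separation and $\,\sup_t|\dot\xi_j|\le4$. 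Hence $\,|\ddot\xi_j|\le16\,\ve_j\Gamma_j^*\to0\,$ uniformly, whence $\,\dot\xi_j(t)=\dot\xi_j(0)+o(1)$, and from $\,\int_0^1\dot\xi_j=\xi_j(1)\to e\,$ also $\,\dot\xi_j(0)\to e$; thus $\,\dot\xi_j(t)\to e\,$ uniformly in $\,t$. Since $\,\dot{\tilde x}_j(t)=\ve_j\dot\xi_j(t)$, the coordinate directions $\,\bbR\dot{\tilde x}_j(t)\to\bbR e=L\,$ uniformly, and transporting back by $\,d\hh\mathrm{exp}_z$, continuous and equal to the identity at $\,z$, gives $\,\bbR\dot x_j(t)\to L\,$ in the Gras\-mann\-i\-an bundle, uniformly in $\,t$. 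I expect the velocity estimate to be the main obstacle: one must rule out the geodesic speed collapsing faster than, or exploding relative to, $\,\ve_j=|b_j-a_j|$, for otherwise $\,\dot\xi_j\,$ could degenerate; it is precisely the quadratic Gron\-wall bound together with the two-sided comparison $\,V_j(0)\asymp\ve_j\,$ that keeps the rescaled limit nondegenerate.
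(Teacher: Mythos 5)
Your proof is correct, but it takes a genuinely different route from the paper's. The paper's argument is soft and never writes down the geodesic equation: its key observation is that, under the diffeomorphism $\,\mathrm{Exp}\,$ of Section~\ref{nc}, the velocity $\,\dot x_j(t)\,$ is, for every $\,t$, the image under the differential of $\,\mathrm{Exp}\,$ at $\,(x_j,t\hh w_j)\,$ of one and the same vertical vector $\,(x_j,w_j)$, independent of $\,t$; hence, after passing to a subsequence along which $\,\bbR\dot x_j(0)\,$ converges to some line $\,L'\nnh$, the uniform convergence $\,\bbR\dot x_j(t)\to L'$ comes for free from continuity of $\,d\,\mathrm{Exp}\,$ and $\,(x_j,t\hh w_j)\to(z,0)$, the identification $\,L'\nh=L\,$ is made by comparing the initial velocity with the chord via (\ref{fwe}) for $\,\varPsi=\mathrm{exp}\hh_{x_j}$, and compactness of the space of directions removes the subsequence. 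You instead argue quantitatively in normal coordinates at $\,z$: rescaling each coordinate geodesic by the chord length $\,\ve_j\,$ and running a quadratic Gronwall (Bihari) estimate on the speed, you show the rescaled geodesics are uniformly $\,C^1$-close to the straight chord, which yields the uniformity in $\,t\,$ and the identification of the limit with $\,L\,$ in one stroke, with no subsequence extraction. The paper's route buys brevity and freedom from coordinate estimates; yours buys explicit quantitative control (the two-sided comparison of geodesic speed with chord length, which keeps the rescaled limit nondegenerate) that the soft argument does not state. Two points you should flag explicitly: first, writing the geodesic equation in the chart presupposes that the segments lie in the chart for large $\,j$, which is exactly what your first step (uniform convergence $\,x_j(t)\to z$, proved before coordinates are introduced) provides, so the order of your two steps is essential, not incidental; second, since the lemma assumes only a connection, possibly with torsion, the full Christoffel symbols need not vanish at $\,0\,$ -- only their symmetric part does -- but this is harmless, as the geodesic equation sees only the symmetric part, and in fact your estimates need only boundedness of $\,\Gamma_j^*$ near $\,z$, because $\,\ve_j\to0\,$ already forces $\,\ve_j\hh\Gamma_j^*\to0$.
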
 
\begin{proof}For $\,\varOmega\,$ associated with $\,\,U'$ as in 
Section~\ref{nc}, $\,(x,w)\in\varOmega$, and any $\,t\in[\hs0,1\hh]$, let us 
set $\,x(t)=\mathrm{exp}\hh_x\hskip1.3pttw$. Then 
$\,(x,x(t))=\hh\mathrm{Exp}\hh(x,tw)$, cf.\ Section~\ref{nc}, and so the 
pre\-image of $\,(0,\dot x(t))\in T_{(x,x(t))}(M\times M)\,$ under the 
differential of $\,\hh\mathrm{Exp}\,$ at $\,(x,tw)\in\tm\nh$, is, obviously, 
the vector $\,(x,w)\in\{x\}\times\txm=T_{(x,tw)}(\{x\}\times\txm)
\subseteq T_{(x,tw)}(\tm)$, {\it independent\/} (under this identification) of 
$\,t$.

Passing to a subsequence, if necessary, we may assume that 
$\,\bbR\dot x_j(0)\to L\nnh'$ as $\,j\to\infty\,$ for some line 
$\,L\nnh'\nh\subseteq\tzn\nh$. Since 
$\,(x_j(0),x_j(t))=\hh\mathrm{Exp}\hh(x_j(0),tw_j)\,$ for 
$\,w_j\nh=\dot x_j(0)$, the relations $\,x_j(t)\to z$, where $\,t=0,1$, and 
$\,\bbR\dot x_j(0)\to L\nnh'$ amount to $\,(x_j(0),w_j)\to(z,0)\,$ and 
$\,(x_j(0),c_jw_j)\to(z,u)\,$ in $\,\tm\,$ with suitable 
$\,c_j\nh\in(0,\infty)\,$ and a vector $\,u\in L\nnh'\smallsetminus\{0\}$. The 
former relation clearly gives $\,(x_j(0),tw_j)\to(z,0)\in\tm\nh$, and the 
latter, combined with the remark about independence of $\,t\,$ made in the 
last paragraph, implies that $\,\bbR\dot x_j(t)\to L\nnh'\nnh$. In both cases, 
the convergence is uniform in $\,t$.

Let us now identify $\,\,U'$ with a neighborhood of $\,0\,$ in a vector space 
$\,W\nnh$, which also trivializes $\,\tm\,$ over $\,\,U'\nnh$, and fix a norm 
$\,|\hskip3pt|\,$ in $\,W\nnh$. Omitting the subscript $\,j\,$ in $\,x_j,y_j$ 
and $\,w_j\nh=\dot x_j(0)$, we thus have $\,y=\mathrm{exp}\hh_x\hs w\,$ and 
$\,w/|w|\to u\,$ for some vector $\,u\,$ spanning $\,L\nnh'\nnh$. From 
(\ref{fwe}) for $\,\varPsi=\mathrm{exp}\hh_x$ we now obtain 
$\,(y-x)/|w|=[\hh\varPsi(w)-\varPsi(0)]/|w|\to u\,$ (cf.\ the lines preceding 
Remark~\ref{submf}), and, again, convergence of $\,(y-x)/|w|\,$ to the 
$\,|\hskip3pt|$-unit vector $\,u\,$ implies the same for the normalized 
sequence $\,(y-x)/|\hh y-x|$. Thus, $\,L\nnh'$ is the connecting limit for 
the pair $\,x_j,y_j$, so that $\,L\nnh'=L$. Since this happens for the 
limit $\,L\nnh'$ of any convergent subsequence of $\,\bbR\dot x_j(0)$, our 
assertion follows.
\end{proof}
Let $\,z\,$ be a point in a manifold $\,M\nh$. If $\,X,Y\nh\subseteq M\nh$, we 
define $\,\bbL_z(X,Y)\,$ to be the set of all connecting limits for pairs 
$\,x_j,y_j$ of sequences in $\,M\,$ such that $\,x_j,y_j$ both converge to 
$\,z$, while $\,x_j\in X$, $\,y_j\in Y$ and $\,x_j\nh\ne y_j$ for all $\,j$. 
By {\it radial limit directions\/} of a subset $\,Z\subseteq M\,$ at a point 
$\,z\in M\,$ we mean elements of $\,\bbL_z(\{z\},Z)$. Radial limit directions 
of a sub\-man\-i\-fold $\,N\subseteq M\,$ at a point $\,z\in N\hs$ are the 
same as lines through $\,0\,$ in $\,\tzn\nh$, as one sees choosing the 
dif\-feo\-mor\-phism $\,\varPsi\,$ used to define connecting limits in such a 
way that it maps a neighborhood of $\,0\,$ in $\,\tzn\nh\subseteq\tzm\,$ into 
$\,N\nnh$.

\section{Quadratic forms}\label{qf}
In this section all {\it vector spaces\/} are 
fi\-\hbox{nite\hh-}\hskip0ptdi\-men\-sion\-al and real. Given a symmetric 
bi\-lin\-e\-ar form $\,\langle\,,\rangle\,$ in a vector space $\,W\nnh$, we 
will denote by $\,C=\{x\in W:\langle x,x\rangle=0\}\,$ its null cone, and 
by $\,V^\perp\nh
=\{x\in W:\langle x,\,\cdot\,\rangle=0\hskip5pt\mathrm{on}\hskip4ptV\}\,$ the 
$\,\langle\,,\rangle$-or\-thog\-o\-nal complement of a vector subspace 
$\,V\subseteq W\nnh$. Thus, $\,W^\perp$ is the null\-space of 
$\,\langle\,,\rangle$. The {\it quadratic function\/} $\,Q:W\to\bbR\,$ 
corresponding to $\,\langle\,,\rangle\,$ is given by 
$\,Q(x)=\langle x,x\rangle$, and so its differential at any 
$\,x\in W\hs$ is $\,d\hh Q_x\nh=2\langle x,\,\cdot\,\rangle$. Consequently,
\begin{equation}\label{wpc}
\mathrm{the\ null\-space\ }\,\hs W^\perp\hs\mathrm{\ coincides\ with\ 
the\ set\ of\ critical\ points\ of\ }\hs\,Q\hs.
\end{equation}
\begin{remark}\label{semdf}Let a symmetric bi\-lin\-e\-ar form 
$\,\langle\,,\rangle\,$ in a vector space $\,W\hs$ be sem\-i\-def\-i\-nite. 
Then $\,\langle\,,\rangle\,$ satisfies the Schwarz inequality 
$\,\langle x,y\rangle^2\le\hs\langle x,x\rangle\langle y,y\rangle\,$ for 
$\,x,y\in W\nnh$. (In fact, changing the sign of $\,\langle\,,\rangle\,$ if 
necessary so as to make it positive sem\-i\-def\-i\-nite, we can 
approximate it with pos\-i\-tive-def\-i\-nite forms 
$\,\langle\,,\rangle+\ve\langle\,,\rangle\nnh_+^{\phantom i}$, where 
$\,\ve>0\,$ and $\,\langle\,,\rangle\nnh_+^{\phantom i}$ is positive 
definite.) Consequently, its null cone $\,C\,$ coincides with its 
null\-space $\,W^\perp\nnh$, and so $\,C\,$ is a vector subspace of $\,W\nnh$. 
Thus, $\,C\,$ is a (sin\-gu\-lar\-i\-ty\nh-free) sub\-man\-i\-fold of 
co\-di\-men\-sion $\,\mathrm{rank}\hskip1.7pt\langle\,,\rangle\,$ in $\,W\nnh$.
\end{remark}
\begin{remark}\label{snglr}Let $\,C\,$ be the null cone and $\,W^\perp$ the 
null\-space of a symmetric bi\-lin\-e\-ar form $\,\langle\,,\rangle\,$ in a 
vector space $\,W\hs$ which is not sem\-i\-def\-i\-nite. Then
\begin{enumerate}
  \def\theenumi{{\rm\alph{enumi}}}
\item[{\rm(a)}] the set of singular points of $\,C\,$ is nonempty, and 
coincides with $\,W^\perp\nnh$,
\item[{\rm(b)}] the nonsingular subset $\,C\smallsetminus W^\perp$ is dense 
in $\,C$,
\item[{\rm(c)}] the connected components of $\,C\smallsetminus W^\perp$ are 
co\-di\-men\-\hbox{sion\hh-}\hskip.7ptone sub\-man\-i\-folds of $\,W\nnh$,
\item[{\rm(d)}] for $\,y\in C$, denoting by $\,E_y\nh\subseteq W\hs$ the 
union of all radial limit directions of $\,C\,$ at $\,y\,$ (defined at the end 
of Section~\ref{cl}), we have $\,E_y\nh=C\,$ if $\,y\in W^\perp\nnh$, and 
$\,E_y\nh=T_yC=y^\perp$ if $\,y\in C\smallsetminus W^\perp\nnh$, so that in 
the former case $\,E_y$ spans $\,W\nnh$, and in the latter $\,E_y$ is a 
co\-di\-men\-\hbox{sion\hh-}\hskip.7ptone sub\-space of $\,W\nnh$.
\end{enumerate}
Namely, we have (c) since $\,0\,$ is a regular value of the function 
$\,u\mapsto\langle u,u\rangle\,$ restricted to 
$\,W\nnh\smallsetminus W^\perp\nnh$ (the differential of which, at any 
$\,u\in W\nnh$, is $\,2\langle u,\,\cdot\,\rangle$). Also, 
$\,\langle\,,\rangle\,$ descends to a symmetric bi\-lin\-e\-ar form in 
$\,W/W^\perp$ which is nondegenerate and indefinite, so that it has nonzero 
null vectors lying arbitrarily close to $\,0$, and (b) follows.

Next, $\,C\,$ spans $\,W\nnh$. In fact, we may choose a 
$\,\langle\,,\rangle$-or\-thog\-o\-nal basis $\,w_j,u_a,v_\mu$, where the 
index $\,j\,$ (or $\,a$, or $\,\mu$) ranges between $\,1\,$ and some 
$\,i_+^{\phantom i}\ge 1\,$ (or, some $\,i_-^{\phantom i}\ge 1$, or, 
respectively, some $\,k\ge0$), while 
$\,\langle w_j,w_j\rangle=1=-\hh\langle u_a,u_a\rangle\,$ and 
$\,\langle v_\mu,v_\mu\rangle=0$. Thus, $\,W\hs$ has a basis of 
$\,\langle\,,\rangle$-null vectors, formed by all 
$\,w_1^{\phantom i}-u_a$, all $\,u_1^{\phantom i}+w_j$, and all $\,v_\mu$. Now 
(d) is immediate from (c) and the final sentence of Section~\ref{cl}, while 
(a) is an obvious consequence of (d). 
\end{remark}
\begin{remark}\label{rgvlu}If $\,(\hskip2pt,\hskip.2pt)\,$ is a 
nondegenerate symmetric bi\-lin\-e\-ar form on a vector space $\,W\hs$ and 
$\,\varSigma=\{u\in W:|u|=1\}\,$ denotes the unit sphere of a fixed Euclidean 
norm $\,|\hskip3pt|\,$ in $\,W\nnh$, then $\,0\,$ is a regular value of the 
function $\,\varSigma\ni u\mapsto(u,u)$.

More precisely, the differential of this function at any $\,u\in\varSigma\,$ 
is $\,2(u,\,\cdot\,)\,$ restricted to $\,T_u\varSigma$, which is nonzero when 
$\,(u,u)=0$, or else $\,u\,$ would be 
$\,(\hskip2pt,\hskip.2pt)$-or\-thog\-o\-nal to $\,u\,$ as well as to 
$\,T_u\varSigma$, and hence to the whole space 
$\,W\nnh=\hs\bbR u\hh\oplus T_u\varSigma$.
\end{remark}

\section{Some consequences of the \hbox{Morse\hs-}\hskip0ptBott 
lemma}\label{mb}
The following result is often referred to as the 
{\it\hbox{Morse\hs-}\hskip0ptBott lemma}.
\begin{lemma}\label{mrsbt}Suppose that a sub\-man\-i\-fold $\,K$ of a 
manifold\/ $\,N$ consists of critical points of a function\/ $\,Q:N\to\bbR$, 
while\/ $\,z\in K\,$ and\/ $\,Q(z)=0$. If for the Hess\-ian\/ 
$\,\partial\hs d\hh Q_z$ we have\/ 
$\,\mathrm{rank}\hskip2.9pt\partial\hs d\hh Q_z\nh\ge\dim N\nh-\hh\dim K$, 
then there exists a dif\-feo\-mor\-phism\/ $\,\varPsi\,$ of a neighborhood\/ 
$\,\,U\hs$ of\/ $\,0\,$ in\/ $\,\tzn$ onto a neighborhood\/ $\,\,U'$ of\/ 
$\,z\,$ in\/ $\,M\,$ such that\/ $\,\varPsi(0)=z\,$ and\/ 
$\,d\hs\varPsi_0^{\phantom i}=\hh\mathrm{Id}:\tzn\to\tzn\nnh$, while\/ 
$\,Q\nh\circ\varPsi\,$ equals the restriction to\/ $\,\,U$ of the quadratic 
function of\/ $\,\partial\hs d\hh Q_z$, and\/ 
$\,K\nh\cap\hh U'\nnh=\varPsi(V\nnh\cap\hh U)$, where $\,V\nh\subseteq\tzm\,$ 
is the null\-space of\/ $\,\partial\hs d\hh Q_z$.
\end{lemma}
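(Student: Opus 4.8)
The plan is to establish the identity $T_zK=V$ first, reduce to a flat model by submanifold charts, and then carry out a \emph{parametrized} Morse reduction in the normal directions. First I would note that, since every point of $K$ is critical for $Q$, differentiating the relation $dQ=0$ along a curve in $K$ through $z$ shows (cf.\ (\ref{hss})) that $T_zK\subseteq V=\mathrm{Ker}\,\partial d Q_z$; moreover $Q\equiv Q(z)=0$ on $K$ near $z$, as $dQ$ vanishes there. The rank hypothesis gives $\dim V=\dim N-\mathrm{rank}\,\partial d Q_z\le\dim K$, so in fact $T_zK=V$, and $\partial d Q_z$ is nondegenerate on any complement of $V$. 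Writing $k=\dim K$ and $m=\dim N-k$, I would choose a submanifold chart identifying a neighborhood of $z$ with a neighborhood of $0$ in $\mathbb{R}^k\times\mathbb{R}^m=T_zN$, with coordinates $(y,w)$ and $K=\{w=0\}$, the splitting arranged so that $V=\mathbb{R}^k\times\{0\}$, the chart differential at $z$ is the identity of $T_zN$, and $\partial d Q_z$ restricts on the normal factor to a fixed nondegenerate symmetric form $J$ while vanishing on $V$.

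Next, using that $Q(y,0)=0$ and $\partial_wQ(y,0)=0$ for every $y$ (the points of $K$ being critical), the integral form of Taylor's formula in the normal variable (as in (\ref{bei}.ii), applied coordinatewise in $w$) produces a smooth, symmetric-matrix-valued function $R(y,w)$ with $Q(y,w)=\langle w,R(y,w)w\rangle$ and $R(0,0)=\frac12 J$, hence $R(0,0)$ nondegenerate; by continuity $R(y,w)$ remains nondegenerate near the origin.

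The crux is a uniform reduction of this family of forms. Consider $\mu(P)=P^{\top}JP$ on $\mathrm{GL}(m,\mathbb{R})$; its differential at $P=I$ is $\dot P\mapsto\dot P^{\top}J+J\dot P$, which is onto the symmetric matrices (send $S$ to $\frac12 J^{-1}S$). Thus $\mu$ is a submersion at $I$ with $\mu(I)=J$, and so it admits a smooth local section $\sigma$ near $J$ with $\sigma(J)=I$. Setting $P(y,w)=\sigma(2R(y,w))$ and $\tilde w=P(y,w)w$ then gives $\langle w,R(y,w)w\rangle=\langle\tilde w,\frac12 J\tilde w\rangle$, so that $Q$ becomes, in the coordinates $(y,\tilde w)$, the quadratic function determined by $\partial d Q_z$. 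Because $P(0,0)=I$ and $\tilde w=P(y,w)w$ vanishes where $w=0$, the map $(y,w)\mapsto(y,\tilde w)$ has differential the identity at $0$, hence is a local diffeomorphism; composing its inverse with the initial chart produces the required $\Psi$, with $\Psi(0)=z$ and $d\Psi_0=\mathrm{Id}$. Finally $K=\{w=0\}$ corresponds, through the invertible $P$, to $\{\tilde w=0\}=V$, which yields $K\cap U'=\Psi(V\cap U)$.

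I expect the main obstacle to be precisely the smoothness of the section $\sigma$, that is, the \emph{simultaneous} (parameter-dependent) normalization of the family $R(y,w)$, together with the bookkeeping that pins down $P(0,0)=I$ so the resulting $\Psi$ satisfies $d\Psi_0=\mathrm{Id}$ and carries $V$ exactly onto $K$. Everything else is routine once the identity $T_zK=V$, which is where the rank hypothesis is used, has been secured.
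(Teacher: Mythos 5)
Your proposal is correct in substance, but it necessarily takes a different route from the paper, because the paper does not prove this lemma at all: its ``proof'' is a citation of \cite{banyaga-hurtubise}, supplemented by two remarks --- that the nullspace of $\partial\hs d\hh Q_x$ contains $T_xK$ for every $x\in K$ (so the assumed rank inequality is actually an equality), and that the normalization $d\varPsi_0=\mathrm{Id}$ can be arranged. Your opening step ($T_zK\subseteq V$, hence $T_zK=V$ by the rank hypothesis) is exactly the paper's first remark; the rest of your argument is a self-contained proof in the style of the classical parametrized Morse lemma: an adapted submanifold chart with identity differential at $z$, the integral Taylor remainder giving $Q(y,w)=\langle w,R(y,w)w\rangle$ with $R$ smooth and symmetric, and --- the key point --- a smooth local section $\sigma$ of the submersion $P\mapsto P^{\top}\!JP$ of $\mathrm{GL}(m,\bbR)$ onto a neighborhood of $J$ in the symmetric matrices, which normalizes the whole family $R(y,w)$ at once. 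The individual steps are sound: the differential $\dot P\mapsto\dot P^{\top}\!J+J\dot P$ at $P=\mathrm{Id}$ is onto (your right inverse $S\mapsto\frac12 J^{-1}S$ works because $J$ is symmetric), $2R(0,0)=J$ puts $2R(y,w)$ in the domain of $\sigma$ near the origin, and $(y,w)\mapsto(y,P(y,w)w)$ has differential $\mathrm{Id}$ at $0$ since every term involving $dP$ is multiplied by $w=0$; invertibility of $P$ then carries $\{w=0\}$, i.e.\ $K$, exactly onto $\{\tilde w=0\}=V$. What your approach buys is an actual proof where the paper has only a reference.

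One caveat, which is a defect of the lemma's statement rather than of your construction. With $d\varPsi_0=\mathrm{Id}$, the exact equality of $Q\circ\varPsi$ with the quadratic function of $\partial\hs d\hh Q_z$ is impossible unless $\partial\hs d\hh Q_z=0$: the Hessian at $0$ of $x\mapsto\partial\hs d\hh Q_z(x,x)$ is $2\hh\partial\hs d\hh Q_z$, whereas the Hessian of $Q\circ\varPsi$ at $0$ equals $\partial\hs d\hh Q_z$ whenever $d\varPsi_0=\mathrm{Id}$ and $dQ_z=0$. Accordingly, what your construction really yields is $Q\circ\varPsi(y,\tilde w)=\frac12\langle\tilde w,J\tilde w\rangle$, i.e.\ the quadratic function of $\frac12\,\partial\hs d\hh Q_z$, not of $\partial\hs d\hh Q_z$ as claimed at the end of your second paragraph. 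The paper carries the same factor of $2$ (in the proof of Lemma~\ref{ncleq} it notes that the Hessians of its ``quadratic'' $Q$ and $Q\hh'$ equal $2(\hskip2pt,\hskip.2pt)$), and the discrepancy is harmless in every application, since only the null cone and nullspace of the modelling form are used downstream, and these are invariant under positive rescaling. So your proof should be read as establishing the consistent version of the statement, with the modelling form $\frac12\,\partial\hs d\hh Q_z$; apart from this normalization, it is complete.
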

\begin{proof}See \cite{banyaga-hurtubise}. Note that, as the null\-space of 
$\,\partial\hs d\hh Q_x$ contains $\,\txk\,$ whenever $\,x\in K$, the 
inequality assumed about $\,\mathrm{rank}\hskip2.9pt\partial\hs d\hh Q_x$ at 
$\,x=z\,$ is actually an equality, not just at $\,z$, but also at all nearby 
$\,x\in K$. Also, the requirement that 
$\,d\hs\varPsi_0^{\phantom i}=\hh\mathrm{Id}$, not explicitly mentioned in 
\cite{banyaga-hurtubise}, can easily be realized, as it satisfied when $\,f\,$ 
is already dif\-feo\-mor\-phi\-cal\-ly identified with a quadratic function.
\end{proof}
Consider now a subset $\,Z\,$ of a manifold $\,N\nnh$, a point $\,z\in Z\,$ 
and a symmetric bi\-lin\-e\-ar form $\,(\hskip2pt,\hskip.2pt)\,$ in 
$\,\tzn\nnh$. We will call $\,Z\,$ a {\it quadric at\/} $\,z\,$ {\it in\/} 
$\,N\hs$ {\it modelled on\/} $\,(\hskip2pt,\hskip.2pt)\,$ if there 
exists a dif\-feo\-mor\-phism $\,\varPsi\,$ of a neighborhood $\,\,U\hs$ of 
$\,0\,$ in $\,\tzn$ onto a neighborhood $\,\,U'$ of $\,z\,$ in $\,M\,$ such 
that $\,\varPsi(0)=z\,$ and 
$\,d\hs\varPsi_0^{\phantom i}=\hh\mathrm{Id}:\tzn\to\tzn\nnh$, a s well as 
$\,Z\cap U'\nnh=\varPsi(C\cap U)$, where $\,C=\{u\in\tzn:(u,u)=0\}\,$ is the 
null cone of $\,(\hskip2pt,\hskip.2pt)$.

We may now rephrase one immediate consequence of Lemma~\ref{mrsbt} as follows.
\begin{lemma}\label{quadr}Under the hypotheses of Lemma\/~{\rm\ref{mrsbt}}, 
the zero set\/ $\,Z=Q^{-1}(0)\,$ is a quadric at\/ $\,z\,$ in\/ $\,N$ 
modelled on\/ $\,\partial\hs d\hh Q_z$.
\end{lemma}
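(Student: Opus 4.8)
The plan is to read Lemma~\ref{quadr} as an essentially immediate corollary of the Morse--Bott lemma (Lemma~\ref{mrsbt}), the point being only to match the data of the hypotheses to the \emph{definition} of a quadric given in the paragraph just before the statement. First I would invoke Lemma~\ref{mrsbt} to obtain a diffeomorphism $\varPsi$ of a neighborhood $U$ of $0$ in $\tzn$ onto a neighborhood $U'$ of $z$ with $\varPsi(0)=z$, $d\hs\varPsi_0=\mathrm{Id}$, and with $Q\circ\varPsi$ equal, on $U$, to the restriction of the quadratic function of the Hessian $\partial\hs d\hh Q_z$. These are exactly the normalizations demanded in the definition of a quadric modelled on $\partial\hs d\hh Q_z$, so the only thing left to check is that $\varPsi$ carries the null cone of $\partial\hs d\hh Q_z$ onto $Z=Q^{-1}(0)$.

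The key step is therefore the set-theoretic identity. Writing $R$ for the quadratic function of $\partial\hs d\hh Q_z$ on $\tzn$, its null cone is $C=\{u\in\tzn:\partial\hs d\hh Q_z(u,u)=0\}=R^{-1}(0)$. Since $Q\circ\varPsi=R$ on $U$ and $Q(z)=0$, a point $u\in U$ satisfies $R(u)=0$ precisely when $\varPsi(u)\in Q^{-1}(0)$; that is,
\begin{equation*}
\varPsi(C\cap U)\,=\,\varPsi(U)\cap Q^{-1}(0)\,=\,U'\cap Z\,,
\end{equation*}
using that $\varPsi$ maps $U$ diffeomorphically onto $U'$. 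This is exactly the condition $Z\cap U'=\varPsi(C\cap U)$ in the definition, completing the verification that $Z$ is a quadric at $z$ in $N$ modelled on $\partial\hs d\hh Q_z$.

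I do not expect any genuine obstacle here; the content is entirely in Lemma~\ref{mrsbt}, and Lemma~\ref{quadr} merely repackages its conclusion in the quadric terminology. The one minor point to be careful about is the distinction between the null\-space $V$ of $\partial\hs d\hh Q_z$, which appears in Lemma~\ref{mrsbt} and describes $K\cap U'$, and the null \emph{cone} $C$, which is what enters the quadric definition: the former governs where the critical manifold $K$ goes, whereas the present assertion concerns the zero level set $Q^{-1}(0)$, so it is the null cone and not the null\-space that is relevant. Since the Morse--Bott normal form literally equates $Q\circ\varPsi$ with the quadratic function of $\partial\hs d\hh Q_z$, the level set $Q^{-1}(0)$ corresponds term by term to the null cone, and the claim follows.
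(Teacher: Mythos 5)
Your proposal is correct and coincides with the paper's treatment: the paper states Lemma~\ref{quadr} as ``one immediate consequence'' of Lemma~\ref{mrsbt} with no further argument, and your spelled-out verification (that the normal form $Q\circ\varPsi = \partial\hs d\hh Q_z(\,\cdot\,,\cdot\,)$ forces $Z\cap U'=\varPsi(C\cap U)$, with the null cone $C$, not the null\-space $V$, being the relevant set) is exactly the intended reading. Nothing is missing.
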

\begin{remark}\label{dcomp}A fi\-\hbox{nite\hh-}\hskip0ptdi\-men\-sion\-al 
real vector space with a fixed symmetric bi\-lin\-e\-ar form 
$\,(\hskip2pt,\hskip.2pt)\,$ can always be decomposed into a 
$\,(\hskip2pt,\hskip.2pt)$-or\-thog\-o\-nal direct sum $\,W\nnh\oplus V\hs$ 
of sub\-spaces such that $\,(\hskip2pt,\hskip.2pt)\,$ is nondegenerate on 
$\,W\,$ and $\,(\hskip2pt,\hskip.2pt)=0\,$ on $\,V\nnh$. Consequently, 
$\,V\hs$ is the null\-space of $\,(\hskip2pt,\hskip.2pt)$. Denoting by 
$\,\varSigma\,$ the $\,|\hskip3pt|$-unit sphere around $\,0\,$ in $\,W\nnh$, 
for a fixed Euclidean norm $\,|\hskip3pt|\,$ in $\,W\nnh\oplus V\nnh$, and by 
$\,Q\,$ the quadratic function of $\,(\hskip2pt,\hskip.2pt)$, we clearly have 
$\,Q(su+x)=s^2(u,u)\,$ whenever $\,(s,u,x)\in\bbR\times\varSigma\times V\nnh$. 
Every neighborhood of $\,0\,$ in $\,W\nnh\oplus V\hs$ contains a smaller 
neighborhood of the form $\,B\oplus K=\{y+x:y\in B,\,\,x\in K\}$, where 
$\,B\subseteq W$ is the open $\,|\hskip3pt|$-ball in $\,W$ of some radius 
$\,\ve>0$, centered at $\,0$, and $\,K\,$ is a neighborhood of $\,0\,$ in 
$\,V\nnh$.
\end{remark}
The next lemma will be used in the proof of Theorem~\ref{tgdir}. 
\begin{lemma}\label{ncleq}If\/ $\,Y\nnh,Y'$ are quadrics at a point\/ $\,z\,$ 
in a manifold\/ $\,P\nnh$, both modelled on the same symmetric bi\-lin\-e\-ar 
form\/ $\,(\hskip2pt,\hskip.2pt)\,$ in\/ $\,\tzp\nnh$, and\/ 
$\,Y\nnh\subseteq Y'\nnh$, then\/ $\,U'\nnh\cap Y=\,U'\nnh\cap Y'$ for some 
neighborhood\/ $\,\,U'$ of\/ $\,z\,$ in\/ $\,P\nnh$.
\end{lemma}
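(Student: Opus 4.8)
The claim is that two quadrics at $z$ modelled on the same bilinear form $(\,,\,)$, one contained in the other, must coincide near $z$. The plan is to pull both quadrics back to the tangent space $\tzp$ via the diffeomorphisms furnished by the definition, reduce the problem to an inclusion of null cones of a single form $(\,,\,)$, and show that such an inclusion of germs forces equality. Since a quadric is by definition the $\varPsi$-image of the null cone $C=\{u\in\tzp:(u,u)=0\}$ intersected with a neighborhood of $0$, the real content is geometric: the null cone $C$ ``fills up'' a definite portion of every neighborhood of $0$, so a subset of $P$ that agrees with $\varPsi(C\cap U)$ and simultaneously sits inside another copy $\varPsi'(C\cap U)$ cannot be a proper subset.

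First I would set up notation. Let $\varPsi,\varPsi'$ be the two diffeomorphisms, defined on neighborhoods $U_1,U_1'$ of $0$ in $\tzp$, with $\varPsi(0)=\varPsi'(0)=z$, $d\varPsi_0=d\varPsi_0'=\mathrm{Id}$, and $Y\cap U_1'=\varPsi(C\cap U_1)$, $Y'\cap U_2'=\varPsi'(C\cap U_2)$ for the \emph{same} null cone $C$. On a common small neighborhood $U'$ of $z$ the hypothesis $Y\subseteq Y'$ reads $\varPsi(C\cap U_1)\supseteq$\,\dots\,, so after applying $(\varPsi')^{-1}$ the composite $\Phi=(\varPsi')^{-1}\circ\varPsi$ is a diffeomorphism fixing $0$ with $d\Phi_0=\mathrm{Id}$, and the inclusion becomes $\Phi(C\cap\widetilde U)\subseteq C$ for a neighborhood $\widetilde U$ of $0$. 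The goal is then to upgrade this to $\Phi(C\cap\widetilde U)=C\cap\widetilde U''$ near $0$, i.e.\ to show the reverse inclusion of germs, which back in $P$ yields $U'\cap Y'\subseteq U'\cap Y$.

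The key step is a radial/scaling argument exploiting that $C$ is a cone and that $d\Phi_0=\mathrm{Id}$. The plan is to decompose $\tzp=W\oplus V$ orthogonally as in Remark~\ref{dcomp}, with $(\,,\,)$ nondegenerate on $W$ and zero on $V$, and to use the product structure $Q(su+x)=s^2(u,u)$ there. Using (\ref{fwe}) I can write $\Phi(u)=u+R(u)$ with $R(u)/|u|\to0$ as $u\to0$; feeding a point $su_0$ of $C$ (with $u_0$ on the unit sphere $\varSigma$, $(u_0,u_0)=0$, and $x\in V$) into the condition $\Phi(C\cap\widetilde U)\subseteq C$ and letting $s\to0^+$ along rays, the leading-order term is controlled by $d\Phi_0=\mathrm{Id}$, while the remainder is higher order. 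This is where Remark~\ref{rgvlu} enters: $0$ is a \emph{regular} value of $u\mapsto(u,u)$ on $\varSigma$, so the smooth hypersurface $C\cap\varSigma\subseteq W$ (together with the flat $V$ factor) has no degeneracies, and the near-identity map $\Phi$ restricted to the punctured cone is a local diffeomorphism of $C\smallsetminus V$ onto a relatively open piece of $C$; dimension counting (both $Y,Y'$ have the same model, hence the same local dimension/codimension) then forces $\Phi(C\cap\widetilde U)$ to be relatively open in $C$, and being contained in the connected cone $C$ it must exhaust a full neighborhood of $0$ in $C$.

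The main obstacle will be handling the singular stratum, namely the nullspace $V=W^\perp$, where $C$ fails to be a manifold and the regular-value argument of Remark~\ref{rgvlu} breaks down. Away from $V$ the inclusion $\Phi(C\cap\widetilde U)\subseteq C$ plus $d\Phi_0=\mathrm{Id}$ gives an open inclusion of codimension-one submanifolds (Remark~\ref{snglr}(c)), hence local equality on $C\smallsetminus V$; the difficulty is promoting this to equality across $V$. Here I would invoke density: by Remark~\ref{snglr}(b) the nonsingular set $C\smallsetminus V$ is dense in $C$, so $\varPsi(C\cap\widetilde U)$ and $\varPsi'(C\cap\widetilde U)$, agreeing on the dense nonsingular part and both being closed in a neighborhood (as $\varPsi$-images of the closed set $C$), must agree on all of $C$ near $0$. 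Translating back through $\varPsi'$ gives $U'\cap Y=U'\cap Y'$, as required.
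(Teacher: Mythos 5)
Your reduction to a single near-identity diffeomorphism $\Phi$ with $\Phi(C\cap\widetilde U)\subseteq C$, and the reformulated goal that the image contain a full neighborhood of $0$ in $C$, is correct and is in the same spirit as the paper's normalization. But the argument breaks down exactly where the lemma's content lies: openness of $\Phi(C\cap\widetilde U)$ in $C$ \emph{at the singular points}, i.e.\ at $V$ and in particular at $0$. Invariance of domain can only give openness at points of $C\smallsetminus V$ whose images again lie in $C\smallsetminus V$; it says nothing at the vertex. The sentence ``being contained in the connected cone $C$ it must exhaust a full neighborhood of $0$'' is not a valid inference (a relatively open subset of a connected set need not be the whole set), and moreover the connectedness you would actually need is that of $C\smallsetminus V$, where your open-inclusion argument lives --- and $C\smallsetminus V$ is in general \emph{disconnected} (two nappes in Lorentzian signature, four sheets when the nondegenerate part of $(\hskip2pt,\hskip.2pt)$ has signature $(1,1)$). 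For the same reason, your final density step rests on the premise that $Y$ and $Y'$ ``agree on the dense nonsingular part,'' which has never been established: what openness could give you is only that $Y\cap\mathrm{nonsing}(Y')$ is \emph{open} in $\mathrm{nonsing}(Y')$. To convert this into equality near $z$ you would need a clopen argument run separately in each connected component of the punctured cone, together with a proof that $Y$ meets every such component arbitrarily close to $z$ (this does follow from $d\Phi_0=\mathrm{Id}$ and the conical structure, but it must be argued); none of this appears in the proposal. Note also that in the semidefinite case $C\smallsetminus V=\emptyset$, so Remark~\ref{snglr}(b),(c) do not apply and your density argument has nothing to work with; that case is easy (both $Y,Y'$ are submanifolds of equal dimension, by Remark~\ref{semdf}, and invariance of domain or the inverse mapping theorem finishes it, as in the paper), but your write-up does not notice that its machinery fails there.

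A second, subtler gap: even the openness claim on the punctured cone presupposes that $\Phi$ never sends a nonsingular point of $C$ into $V$. You assert this implicitly (``local diffeomorphism of $C\smallsetminus V$ onto a relatively open piece of $C$'') but do not prove it, and it is not automatic. It follows from a tangent-space argument (a smooth hypersurface inside $C$ through a point of $V$ would have its tangent hyperplane contained in $C$) \emph{except} when the nondegenerate part of $(\hskip2pt,\hskip.2pt)$ has signature $(1,1)$: there $C$ is a union of two hyperplanes through $V$, such hypersurfaces do exist, and excluding the pathology near $z$ requires an additional injectivity argument. (Alternatively, one can sidestep this by working with $Y\cap\mathrm{nonsing}(Y')$ instead of $\mathrm{nonsing}(Y)$, using only the easy direction $\mathrm{sing}(Y)\subseteq\mathrm{sing}(Y')$ from Remark~\ref{snglr}(d) --- but that is not what you wrote.) The paper's proof avoids every one of these pointwise-topological issues by arguing at the level of defining functions: it straightens $Y$ into the exact cone, writes $Y'$ as the zero set of $Q'=Q\circ\Theta$ with $\Theta$ near-identity, uses Remark~\ref{snglr}(d) only to conclude that $Q'$ and $dQ'$ vanish on $V$, factors $Q'(su+x)=s^2\mu(s,u,x)$ with $\mu(0,u,0)=(u,u)$, and then obtains from Remark~\ref{rgvlu}, compactness of $\varPi$, and Lemma~\ref{nzero}(b) a \emph{uniform} neighborhood on which $\mu\ne0$ off the cone locus; the inclusion $Y'\subseteq Y$ near $z$ then drops out at once. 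As written, your proposal assumes the conclusion at its critical point, so the gaps are genuine.
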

\begin{proof}In the case where $\,(\hskip2pt,\hskip.2pt)\,$ is 
sem\-i\-def\-i\-nite, $\,Y$ and $\,Y'$ are sub\-man\-i\-folds of 
co\-di\-men\-sion $\,\mathrm{rank}\hskip1.7pt(\hskip2pt,\hskip.2pt)\,$ in 
$\,P\hs$ (see Remark~\ref{semdf}), and our claim follows from the inverse 
mapping theorem applied to the inclusion $\,Y\nh\to Y'\nnh$.

Suppose now that $\,(\hskip2pt,\hskip.2pt)\,$ is not sem\-i\-def\-i\-nite. 
Using the notations and identifications introduced in Remark~\ref{dcomp}, we 
think of $\,P\hs$ as a neighborhood of $\,0\,$ in 
$\,\tzp\nnh=W\nnh\oplus V\hs$ having the form $\,B\oplus K$, with $\,Y$ and 
$\,Y'$ equal to the zero sets of the quadratic function $\,Q\,$ of 
$\,(\hskip2pt,\hskip.2pt)\,$ and, respectively, of the function $\,Q\hh'$ 
obtained as the composite of $\,Q\,$ with a dif\-feo\-mor\-phism between 
$\,B\oplus K\,$ and a neighborhood of $\,0\,$ in $\,W\nnh\oplus V\nnh$, whose 
value and differential at $\,0\,$ are $\,0\,$ and $\,\mathrm{Id}$. The 
Hess\-i\-ans of $\,Q\,$ and of $\,Q\hh'$ thus both equal 
$\,2(\hskip2pt,\hskip.2pt)$, while, by (\ref{wpc}), the neighborhood $\,K\,$ 
of $\,0\,$ in $\,V\hs$ appearing in the equality $\,P\nh=B\oplus K\,$ is 
precisely the set of critical points of $\,Q$, that is, singular points of 
$\,Y$ (see Remark~\ref{snglr}(a)). In view of the characterization of singular 
and nonsingular points of a quadric, given in the final clause of 
Remark~\ref{snglr}(d), all singular points of $\,Y$ are also singular in 
$\,Y'\nnh$, so that $\,K\,$ must consist of critical points, as well as zeros, 
of $\,Q\hh'\nnh$.  

For the open set $\,\varOmega=(-\hh\ve,\ve)\times\varSigma\times K\,$ in 
$\,\bbR\times\varSigma\times V\nnh$, where $\,\ve\,$ is the radius of 
$\,B\,$ (cf.\ Remark~\ref{dcomp}) and the function $\,\beta:\varOmega\to\bbR\,$ 
given by $\,\beta(s,u,x)=Q\hh'\nh(su+x)$, we thus have 
$\,\beta(0,u,x)=\beta_s(0,u,x)=0\,$ whenever $\,(0,u,x)\in\varOmega\,$ 
(notation of (\ref{bei})), and so
\begin{equation}\label{qsu}
\mathrm{i)}\hskip8ptQ\hh'\nh(su+x)\,=\,s^2\mu(s,u,x)\hs,\hskip18pt\mathrm{ii)}
\hskip8pt\mu(0,u,0)\,=\,(u,u)
\end{equation}
for $\,\mu:\varOmega\to\bbR\,$ with 
$\,\mu(s,u,x)=\int_0^1(1-t)\beta_{ss}^{\phantom i}(ts,u,x)\,dt\,$ and all 
$\,(s,u,x)\in\varOmega$. In fact, (\ref{bei}.ii) yields (\ref{qsu}.i), and 
(\ref{qsu}.ii) follows as 
$\,2\hh\mu(0,u,0)=\hh\partial\hs d\hh Q\hh_{\nh0}'(u,u)$, cf.\ (\ref{hss}).

According to Remark~\ref{rgvlu}, the points $\,u\in\varSigma\,$ such that 
$\,(u,u)=0\,$ form a (possibly disconnected) 
co\-di\-men\-\hbox{sion\hh-}\hskip0ptone sub\-man\-i\-fold $\,\varPi\,$ of 
$\,\varSigma$, and the function $\,\varSigma\ni u\mapsto(u,u)\,$ has a nonzero 
differential at every point of $\,\varPi$. For 
$\,\varOmega_0^{\phantom i}=(-\hh\ve,\ve)\times\varPi\times K$, (\ref{qsu}.ii) 
and compactness of $\,\varPi\,$ allow us to choose $\,\ve\,$ and $\,K\,$ small 
enough so as to ensure that $\,\mu:\varOmega\to\bbR\,$ has a nonzero 
differential at every point of the co\-di\-men\-\hbox{sion\hh-}\hskip0ptone 
sub\-man\-i\-fold $\,\varOmega_0^{\phantom i}$ of $\,\varOmega$. Also, by 
(\ref{qsu}.i), $\,\mu=0\,$ on $\,\varOmega_0^{\phantom i}$, since the equality 
$\,Q(su+x)=s^2(u,u)\,$ (see Remark~\ref{dcomp}) gives 
$\,\varOmega_0^{\phantom i}\subseteq Y\nnh\subseteq Y'\nnh$. 
Lemma~\ref{nzero}(b) now guarantees, for smaller $\,\ve\,$ and $\,K$, the 
existence of an open subset $\,\,U\,$ of $\,\varSigma\,$ such that 
$\,\varPi\subseteq U\,$ and $\,\mu\ne0\,$ everywhere in 
$\,(-\hh\ve,\ve)\times[\hh U\smallsetminus\varPi\hh]\times K$. Since, in 
addition, (\ref{qsu}.ii) yields $\,\mu\ne0\,$ at all points of the compact set 
$\,\{0\}\times[\varSigma\smallsetminus U\hh]\times\{0\}$, making 
$\,\ve\,$ and $\,K\,$ even smaller we obtain $\,\mu\ne0\,$ everywhere in 
$\,(-\hh\ve,\ve)\times[\varSigma\smallsetminus\varPi\hh]\times K
=\varOmega\smallsetminus\varOmega_0^{\phantom i}$. In view of (\ref{qsu}.i), 
this proves the lemma, with $\,\,U'\nh=B\oplus K\,$ for the current choices of 
$\,\ve\,$ and $\,K$.
\end{proof}
The following result is a key technical ingredient for the proof of 
Theorem~\ref{nbzrs}. For the definition of 
$\,\bbL_z(Z\smallsetminus\phi^{-1}(0),K)$, see Section~\ref{cl}.
\begin{theorem}\label{tgdir}Given a sub\-man\-i\-fold\/ $\,K$ of a manifold\/ 
$\,N\nh$, a point\/ $\,z\in K$, a vector space\/ $\,\mathcal{T}\hs$ containing 
$\,\tzn$ as a subspace, a symmetric bi\-lin\-e\-ar form\/ 
$\,\langle\,,\rangle\,$ in\/ $\,\mathcal{T}\nnh$, a vec\-tor-val\-ued 
function\/ $\,f:N\to\mathcal{T}\nnh$, and a function\/ $\,\phi:N\to\bbR$, for 
which\/ $\,d\phi_z\nh\ne0\,$ and\/ $\,P\nh=\phi^{-1}(0)\,$ is a 
co\-di\-men\-\hbox{sion\hh-}\hskip0ptone sub\-man\-i\-fold of\/ $\,N$ such 
that\/ $\,K\subseteq Y\nnh\subseteq P$ with some quadric\/ $\,Y$ at\/ $\,z\,$ 
in the manifold\/ $\,P$ modelled on the restriction of\/ 
$\,\langle\,,\rangle\,$ to the subspace\/ 
$\,H=\tzp\nnh$, let\/ $\,\partial\hs d\hskip-.8ptf_{\nh z}$ denote the 
Hess\-i\-an\/ of\/ $\,f\hs$ at\/ $\,z$. In addition, suppose that
\begin{enumerate}
  \def\theenumi{{\rm\alph{enumi}}}
\item[{\rm(a)}] the restriction of\/ $\,\langle\,,\rangle\,$ to\/ $\,\tzn$ is 
nonzero,
\item[{\rm(b)}] $V\nnh=\hs\tzk\,$ is the null\-space of the restriction of\/ 
$\,\langle\,,\rangle\,$ to\/ $\,H=\tzp\nnh$,
\item[{\rm(c)}] $d\hskip-.8ptf=0\,$ at all points of\/ $\,K$, and\/ 
$\,Y\nnh\subseteq Z$, where\/ $\,Z\subseteq N$ is the zero set of\/ $\,f$,
\item[{\rm(d)}] $\langle w,\partial\hs d\hskip-.8ptf_{\nh z}\rangle
=\hs d\hh\phi_z\nh\otimes\langle w,\,\cdot\,\rangle
+\langle w,\,\cdot\,\rangle\otimes\hs d\hh\phi_z\nh
-[\hh d\hh\phi_z(w)]\hh\langle\,,\rangle\,$ for every\/ $\,w\in\tzn\nh$.
\end{enumerate}
Then
\begin{enumerate}
  \def\theenumi{{\rm\roman{enumi}}}
\item[{\rm(i)}] $Z\cap P\cap\varOmega\,\subseteq\,Y$ for some neighborhood\/ 
$\,\varOmega\,$ of\/ $\,z\,$ in\/ $\,N\nnh$,
\item[{\rm(ii)}] no element of\/ $\,\bbL_z(Z\smallsetminus\phi^{-1}(0),K)\,$ 
is contained in\/ $\,(H\smallsetminus V\hh)\cup\{0\}$.
\end{enumerate}
\end{theorem}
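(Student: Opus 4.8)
The plan is to treat the two conclusions separately, extracting (i) from the Morse–Bott machinery already built in Section \ref{mb}, and (ii) from a contradiction argument driven by hypothesis (d) and the Hessian formula of Remark \ref{hessn}. First, for part (i), I would introduce the function $Q:P\to\bbR$ obtained by restricting $\langle f,f\rangle$ (that is, the quadratic function of $\langle\,,\rangle$ applied to $f$) to the submanifold $P=\phi^{-1}(0)$, where $f$ vanishes on $K$ and $Y\subseteq Z$ by (c). The key is that $Z\cap P$ is exactly the zero set of $Q$ near $z$: since $\langle\,,\rangle$ restricted to $H=\tzp$ may be degenerate with null\-space $V=\tzk$ by (b), vanishing of $Q$ need not force $f=0$, so I must show that, on a small enough neighborhood, the only way $\langle f,f\rangle=0$ can hold along $P$ without $f=0$ is accounted for by the quadric $Y$. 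The natural route is to compute the Hessian $\partial\hs d\hh Q_z$ on $\tzp$ using Remark \ref{hessn}(ii), with $\psi=d\hskip-.8ptf$ and an appropriate $\xi$; hypothesis (d) pins down this Hessian to be proportional to $\langle\,,\rangle|_H$, so that $Q$ satisfies the Morse–Bott rank condition of Lemma \ref{mrsbt} along $K$, and its zero set is a quadric modelled on $\langle\,,\rangle|_H$ by Lemma \ref{quadr}. Since $Y$ is given as a quadric on the same form and $Y\subseteq Z\cap P$, Lemma \ref{ncleq} then yields $Z\cap P\cap\varOmega\subseteq Y$ for a suitable neighborhood $\varOmega$, which is precisely (i).

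The delicate point in part (i) is verifying the hypothesis of Lemma \ref{ncleq}, namely that both $Y$ and the zero set of $Q$ are quadrics on the \emph{same} form: this requires checking that the Hessian of $Q$ genuinely equals (a scalar multiple of) the restriction of $\langle\,,\rangle$ to $H$, and that $K$ is its critical locus. I would compute $\partial\hs d\hh Q_z(u,u)=\langle f,\partial\hs d\hskip-.8ptf_{\nh z}(u,u)\rangle + \langle \partial\hs d\hskip-.8ptf_{\nh z}(u,\cdot),\ldots\rangle$ carefully; because $f_z=0$ and $d\hskip-.8ptf_z=0$ (as $z\in K$ and (c) gives $d\hskip-.8ptf=0$ on $K$), only the term quadratic in first derivatives survives, and contracting (d) with the tangent data along $P$—where $d\hh\phi_z$ vanishes on $H=\tzp$—kills the $d\hh\phi_z$ contributions, leaving $\partial\hs d\hh Q_z = 2\langle\,,\rangle|_H$. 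This is the computation I expect to be the main obstacle, since it must reconcile the ambient form on $\mathcal{T}$ with its restriction to the tangent subspace $H$ and correctly use that $d\hh\phi_z|_H=0$.

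For part (ii), I would argue by contradiction: suppose some $L\in\bbL_z(Z\smallsetminus\phi^{-1}(0),K)$ satisfies $L\subseteq(H\smallsetminus V)\cup\{0\}$, i.e.\ $L$ is spanned by a vector $u\in H\smallsetminus V$. By definition of $\bbL_z$, there are sequences $x_j\in Z\smallsetminus\phi^{-1}(0)$ and $y_j\in K$, both tending to $z$, with connecting limit $L$. Since $x_j,y_j\in Z$ and $f$ vanishes on $Z$, Remark \ref{submf}(ii) forces $L\subseteq\mathrm{Ker}\hskip2.2pt\partial\hs d\hskip-.8ptf_z$; but from (d) and the Hessian identity, $\partial\hs d\hskip-.8ptf_z$ restricted to $H$ is governed by $\langle\,,\rangle|_H$, whose null\-space on $H$ is exactly $V$ by (b). The condition $u\in H\smallsetminus V$ then contradicts $u\in\mathrm{Ker}\,\partial\hs d\hskip-.8ptf_z$, provided I can show the relevant component of the Hessian is nondegenerate off $V$. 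The subtlety is that $x_j$ lies off $P=\phi^{-1}(0)$, so $\phi(x_j)\ne0$ and the $d\hh\phi_z$ terms in (d) are active along the approach; I would need to track their first-order contribution, showing that nonvanishing of $\phi$ along $x_j$ does not create spurious kernel directions, using (a) to ensure $\langle\,,\rangle|_{\tzn}\ne0$ and hence that $d\hh\phi_z$ and $\langle\,,\rangle$ interact nontrivially. Combining these, the assumed $L\subseteq(H\smallsetminus V)\cup\{0\}$ is impossible, establishing (ii).
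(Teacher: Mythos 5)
Both halves of your proposal contain genuine gaps.

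For part (i), the choice $\,Q=\langle f,f\rangle\,$ cannot work. Since $\,z\in K$, hypothesis (c) gives $\,f_z=0\,$ \emph{and} $\,d\hskip-.8ptf_z=0$, so
$\,\partial\hs d\hh Q_z(u,u)=2\hh\langle d\hskip-.8ptf_z(u),d\hskip-.8ptf_z(u)\rangle+2\hh\langle f_z,\partial\hs d\hskip-.8ptf_z(u,u)\rangle=0\,$
for every $\,u$: the Hessian of $\,\langle f,f\rangle\,$ vanishes identically at $\,z\,$ (indeed at every point of $\,K$), and is certainly not $\,2\langle\,,\rangle\,$ restricted to $\,H$. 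Your phrase ``only the term quadratic in first derivatives survives'' contradicts your own observation that $\,d\hskip-.8ptf_z=0$, since that term \emph{is} the first summand above. With a vanishing Hessian, $\,Q\,$ produces no quadric modelled on $\,\langle\,,\rangle|_H^{\phantom i}$, and the comparison of quadrics via Lemma~\ref{ncleq} never gets off the ground. The paper's remedy, in subsection~\ref{ai}, is to contract $\,f\,$ with a \emph{fixed} vector rather than with itself: one fixes $\,w\in\tzn\,$ with $\,d\hh\phi_z(w)\ne0\ne\langle w,w\rangle\,$ (possible by (a)) and sets $\,Q(y)=\langle w,f(y)\rangle$, which is linear in $\,f$; its Hessian at $\,z\,$ is then exactly the right-hand side of (d), and by (\ref{qxe}) its restriction to $\,H=\mathrm{Ker}\hskip2.2ptd\hh\phi_z\,$ equals $\,-[\hh d\hh\phi_z(w)]\hh\langle\,,\rangle$, a nonzero multiple of the model form, with null\-space $\,V\,$ by (b). After this substitution, the rest of your outline for (i) --- Morse--Bott on $\,P\nh$, Lemma~\ref{quadr}, then Lemma~\ref{ncleq} applied to $\,Y\subseteq Z\cap P\subseteq P\cap Q^{-1}(0)\,$ --- is precisely the paper's argument.

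Part (ii) has a deeper gap: the step ``Remark~\ref{submf}(ii) forces $\,L\subseteq\mathrm{Ker}\hskip2.2pt\partial\hs d\hskip-.8ptf_z$'' is both unjustified and false. Remark~\ref{submf}(ii) needs a section vanishing along \emph{both} sequences: taking $\,\psi=f\,$ yields only $\,L\subseteq\mathrm{Ker}\hskip2.2pt\partial f_z=\tzn$, vacuous because $\,\partial f_z=d\hskip-.8ptf_z=0$; taking $\,\psi=d\hskip-.8ptf\,$ is illegitimate because (c) gives $\,d\hskip-.8ptf=0\,$ only on $\,K$, and nothing makes $\,d\hskip-.8ptf\,$ vanish at the points $\,x_j\in Z\smallsetminus\phi^{-1}(0)$. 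Worse, no Hessian analogue of Remark~\ref{submf}(ii) can hold: for $\,f(x,y)=x^2-y^2\,$ on $\,\rto\nnh$, the sequences $\,x_j=(1/j,1/j)\,$ and $\,y_j=(1/j,-1/j)\,$ consist of zeros of $\,f\,$ and their connecting limit is the $\,y$-axis, which does not lie in the (trivial) kernel of the Hessian at $\,0$. So even though the algebra you carry out afterwards is correct --- (d) does show $\,\partial\hs d\hskip-.8ptf_z(u,\,\cdot\,)\ne0\,$ for $\,u\in H\smallsetminus V\,$ --- the inclusion of $\,L\,$ in the Hessian kernel that you need is simply not available, and ``tracking the first-order contribution of $\,d\hh\phi_z$'' at the single point $\,z\,$ cannot produce it. This is exactly why the paper's proof of (ii) occupies subsections~\ref{id}--\ref{fs}: after identifying $\,N\,$ with a neighborhood of $\,0\,$ in $\,\tzn\,$ by Morse--Bott, it writes nearby points as $\,su+x\,$ with $\,u\,$ in the unit sphere $\,\varSigma\subseteq W\,$ and $\,x\in K$, and replaces $\,f\,$ and $\,\phi\,$ by the divided functions $\,F\,$ and $\,\varPhi\,$ of (\ref{vsu}), with $\,f(su+x)=s^2F(s,u,x)\,$ and $\,\phi(su+x)=s\hs\varPhi(s,u,x)$; the contradiction is then extracted from where $\,F\,$ can vanish, via (\ref{wdv}), Lemma~\ref{nzero}(b) and compactness of $\,\varSigma$, split into the cases where $\,(\hskip2pt,\hskip.2pt)\,$ is or is not sem\-i\-def\-i\-nite on $\,H\,$ (subsections~\ref{aa} and~\ref{cc}--\ref{fs}), and finally converted into assertion (ii) through (\ref{cez}). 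The obstruction lives in the behaviour of $\,f\,$ along sequences approaching $\,z\,$ transversally to $\,P\nh$, which only this blow-up analysis controls.
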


\section{Proof of Theorem~\ref{tgdir}}\label{pm}
In view of (a), both $\,\langle\,,\rangle\,$ and $\,d\phi_z$ are nonzero on 
$\,\tzn\nnh$. Let us fix $\,w\in\tzn\hs$ such that 
$\,d\phi_z(w)\ne0\ne\langle w,w\rangle$. 
The Hess\-i\-an $\,\partial\hs d\hh Q_z$ of the function $\,Q:N\to\bbR\,$ with 
$\,Q(y)=\langle w,f(y)\rangle\,$ obviously equals the right-hand side in (d). 
Thus, if we set $\,\xi=d\hh\phi_z$,
\begin{equation}\label{qxe}
\begin{array}{l}
\partial\hs d\hh Q_z(w,w)=[\hs\xi(w)]\hh\langle w,w\rangle\ne0\hskip11pt
\mathrm{and,\ for}\hskip8ptu\in H=\tzp\nnh=\hh\mathrm{Ker}\hskip2.2pt\xi\hs,\\
\partial\hs d\hh Q_z(u,u)=-\hs[\hs\xi(w)]\hh\langle u,u\rangle\hs,
\hskip25pt\partial\hs d\hh Q_z(w,u)=0\hs.
\end{array}
\end{equation}
\subsection{{\bf Assertion} {\rm (i)}\label{ai}}
By (b) and (\ref{qxe}), the assumptions of Lemma~\ref{mrsbt} hold for 
$\,P\hs$ (rather than $\,N$), our $\,z$, the restriction of $\,Q\,$ to 
$\,P\nh$, and $\,K$. Therefore, in view of Lemma~\ref{quadr}, 
$\,Y'\nnh=P\cap Q^{-1}(0)\,$ is a quadric at $\,z\,$ in $\,P\hs$ modelled 
on the restriction to $\,\tzp\hs$ of $\,\partial\hs d\hh Q_z$ or, 
equivalently, of 
$\,\langle\,,\rangle\,$ (cf.\ (\ref{qxe})). Lemma~\ref{ncleq} thus applies 
to $\,Y'$ and the quadric $\,Y\hs$ in the statement of Theorem~\ref{tgdir}, 
as the hypotheses $\,Y\nnh\subseteq P\hs$ and $\,Y\nnh\subseteq Z\,$ in 
Theorem~\ref{tgdir}, combined with the relation $\,Z\subseteq Q^{-1}(0)\,$ 
(obvious from the definitions of $\,Z\,$ and $\,Q$), give 
$\,Y\nnh\subseteq Z\cap P\subseteq Y'\nnh$. In view of Lemma~\ref{ncleq}, the 
latter inclusions turn into equalities if one replaces the sets involved by 
their intersections with a suitable neighborhood $\,\varOmega\,$ of $\,z\,$ 
in $\,N\nnh$. This not only yields the conclusion 
$\,Z\cap P\cap\varOmega\,\subseteq\,Y\hs$ claimed in (i), but also shows 
that
\begin{equation}\label{vez}
f=0\,\,\mathrm{\ at\ all\ points\ of\ }\,\,P\cap Q^{-1}(0)\,\,\mathrm{\ 
sufficiently\ close\ to\ }\,\,z\hs.
\end{equation}
The remainder of this section is devoted to proving assertion (ii).

\subsection{{\bf Identifications and decompositions}\label{id}}
In view of (b) and (\ref{qxe}), the hypotheses of Lemma~\ref{mrsbt} are also 
satisfied by our $\,N\nnh,z,Q\,$ and $\,K$. Replacing $\,N\hs$ by a 
neighborhood of $\,z\,$ in $\,N\nnh$, we may thus use Lemma~\ref{mrsbt} to 
identify $\,N\hs$ with a neighborhood of $\,0\,$ in the vector space 
$\,\tzn\nnh$, and $\,z\,$ with $\,0$, in such a way that $\,Q\,$ becomes the 
quadratic function of the symmetric bi\-lin\-e\-ar form 
$\,(\hskip2pt,\hskip.2pt)=\partial\hs d\hh Q_z$ on $\,\tzn\nnh$, and $\,K\,$ 
is the intersection of $\,N\hs$ with the null\-space of 
$\,(\hskip2pt,\hskip.2pt)$. We also decompose $\,\tzn\hs$ into a direct sum 
$\,W\nnh\oplus V\hs$ as in Remark~\ref{dcomp}, choosing $\,W$ so that 
$\,w\in W\nnh$. Thus, $\,V\hs$ is the null\-space of 
$\,(\hskip2pt,\hskip.2pt)$. As a result, we obtain three 
$\,(\hskip2pt,\hskip.2pt)$-or\-thog\-o\-nal decompositions:
\begin{equation}\label{dec}
\tzn=\,W\nnh\oplus V,\hskip16ptW=\,\bbR w\oplus H',\hskip10pt
H\hs=\,H'\nnh\oplus V,\hskip16pt
\end{equation}
where $\,H'\nnh=w^\perp\nnh\cap W\nnh$, with $\,(\hskip3pt)^\perp$ standing 
for the $\,(\hskip2pt,\hskip.2pt)$-or\-thog\-o\-nal complement in 
$\,\tzn\nnh$.

As $\,V\hs$ is the null\-space of $\,(\hskip2pt,\hskip.2pt)$, 
we have $\,K\nh=V\nnh\cap N\nnh$. Replacing $\,N\hs$ and $\,K\,$ with smaller 
neighborhoods of $\,0\,$ in $\,\tzn\hs$ and $\,V\nnh$, we thus get 
$\,N\nnh=B\oplus K$, meaning that $\,N\nnh=\{y+x:y\in B,\,\,x\in K\}$, where 
$\,B\subseteq W$ is the open $\,|\hskip3pt|$-ball in $\,W$ of some radius 
$\,\ve>0$, centered at $\,0$, for a fixed Euclidean norm $\,|\hskip3pt|\,$ in 
$\,\tzn\nnh$. Summarizing, we have
\begin{equation}\label{sum}
H'\nh=H\cap W,\hskip22ptH'\hs\subseteq\,H=\tzp\nh=\mathrm{Ker}\hskip2.2pt\xi
=w^\perp,\hskip22ptV\nh=\,\tzk\hh.
\end{equation}
As $\,(\hskip2pt,\hskip.2pt)=\partial\hs d\hh Q_z$ satisfies (\ref{qxe}), it 
follows from (b) and (\ref{dec}) that
\begin{equation}\label{rst}
\begin{array}{l}
\mathrm{a)}\hskip6pt\mathrm{the\ restriction\ of\ }\,(\hskip2pt,\hskip.2pt)\,
\mathrm{\ to\ }\,H'\hs\mathrm{\ is\ nondegenerate,}\\
\mathrm{b)}\hskip6pt\mathrm{if\ }\,(\hskip2pt,\hskip.2pt)\,\mathrm{\ is\ 
positive\ or\ negative\ definite\ on\ }\,H',\hs\mathrm{\ so\ must\ be\ 
}\,\langle\,,\rangle\hs,
\end{array}
\end{equation}
(\ref{rst}.b) being obvious since $\,(\hskip2pt,\hskip.2pt)\,$ restricted to 
$\,H'$ is, by (\ref{qxe}), a nonzero multiple of $\,\langle\,,\rangle$.

We use the symbol $\,\varSigma\,$ for the $\,|\hskip3pt|$-unit sphere around 
$\,0\,$ in $\,W\nnh$.

\subsection{{\bf Factorizations of\/ $\,F\nnh,\phi,Q$, and a description of 
$\,\bbL_z(Z\smallsetminus\phi^{-1}(0),K)$}\label{fd}}
From now on $\,(s,u,x)\,$ denotes a generic element of the open set 
$\,\varOmega=(-\hh\ve,\ve)\times\varSigma\times K\,$ in 
$\,\bbR\times\varSigma\times V\nnh$. We define a $\,C^\infty$ function 
$\,\beta:\varOmega\to\tzn\hs$ by $\,\beta(s,u,x)=f(su+x)$. As $\,f\,$ 
and $\,d\hskip-.8ptf\,$ vanish on 
$\,V\nnh\cap N\nnh=K\subseteq Z$, we have $\,\beta(0,u,x)=f(0,x)=0\,$ as well 
as $\,\beta_s(0,u,x)=d\hskip-.8ptf_{\nh x}(u)=0\,$ whenever 
$\,(0,u,x)\in\varOmega\,$ (notation of (\ref{bei})). Similarly, the function 
$\,\gamma(s,u,x)=\phi(su+x)\,$ vanishes when $\,s=0$. Thus, 
$\,\beta(s,u,x)\,$ is smoothly divisible by $\,s^2\nnh$, and 
$\,\gamma(s,u,x)\,$ by $\,s$. Explicitly, according to (\ref{bei}),
\begin{equation}\label{vsu}
\begin{array}{l}
f(su+x)=s^2F(s,u,x)\hskip8.38pt\mathrm{with}\hskip4.4pt
F(s,u,x)=\int_0^1(1-t)\beta_{ss}^{\phantom i}(ts,u,x)\,dt\hs,\\
\phi(su+x)=s\hs\varPhi(s,u,x)\hs,\hskip12pt\mathrm{where}\hskip7pt
\varPhi(s,u,x)=\int_0^1\gamma_s(ts,u,x)\,dt\hs,\\
Q(su+x)\,=\,s^2(u,u)\,=\,s^2\langle w,F(s,u,x)\rangle\hs.
\end{array}
\end{equation}
By (\ref{vsu}), a vector spanning a line 
$\,L\in\bbL_z(Z\smallsetminus\phi^{-1}(0),K)\,$ is, up to a factor, the limit 
of a sequence $\,v_j/|v_j|$, $\,j=1,2,\dots\hs$, where 
$\,v_j\nh=s_ju_j\nh+x_j\nh-y_j$ with $\,(s_j,u_j,x_j)\in\varOmega\,$ and 
$\,y_j\nh\in K\,$ such that $\,(s_j,x_j,y_j)\to(0,0,0)$, as well as 
\begin{equation}\label{fsu}
\varPhi(s_j,u_j,x_j)\,\ne\hs\,0\,=\,F(s_j,u_j,x_j)\,=\,(u_j,u_j)\hskip14pt
\mathrm{for\ all}\hskip8ptj\ge1\hh.
\end{equation}
Passing to a subsequence, we may further assume that $\,u_j\nh\to u\,$ for 
some $\,u\in\varSigma$, while $\,s_ju_j/|v_j|\to c\hh u\,$ and 
$\,(x_j\nh-y_j)/|v_j|\to x\,$ for some $\,c\in\bbR\,$ and $\,x\in V\nnh$. 
Thus, since $\,F\,$ is continuous, $\,F(0,u,0)=(u,u)=0$. Also, 
$\,L=\bbR(c\hh u+x)$, so that, by (\ref{sum}),
\begin{equation}\label{cez}
\mathrm{if}\hskip8pt\xi(u)\ne\hs0\hs,\hskip8pt\mathrm{then}\hskip7pt
L\hskip7pt\mathrm{is\ not\ contained\ 
in}\hskip7pt(H\smallsetminus V\hh)\cup\{0\}\hs.
\end{equation}

\subsection{{\bf Values on} $\,\{0\}\times\varSigma\times\{0\}$, {\bf where} 
$\,\varSigma\,$ {\bf is the $\,|\hskip3pt|$-unit sphere in} $\,W$\label{vo}}
For $\,\xi=d\hh\phi_z$ and any $\,u\in\varSigma$, the definitions of 
$\,\beta\,$ and $\,\gamma\,$ along with (\ref{hss}) and (\ref{vsu}) give
\begin{equation}\label{azu}
\mathrm{i)}\hskip8pt2\hh F(0,u,0)\,
=\,\hh\partial\hs d\hskip-.8ptf_{\nh z}(u,u)\hs,\hskip18pt\mathrm{ii)}\hskip8pt
\varPhi(0,u,0)=\xi(u)\hs.
\end{equation}
Consequently, using (d) we see that, if $\,u\in\varSigma\,$ and 
$\,u\hh'\nh\in T_u\varSigma$, while $\,w\hh'\nh\in\tzn\nnh$,
\begin{equation}\label{wdv}
\begin{array}{l}
\mathrm{a)}\hskip6pt2\hh\langle w\hh'\nnh,F(0,u,0)\rangle
=2\hh[\hs\xi(u)]\hh\langle w\hh'\nnh,u\rangle
-[\hs\xi(w\hh'\hh)]\hh\langle u,u\rangle\hs,\\
\mathrm{b)}\hskip6pt
\langle w\hh'\nnh,d\hskip-.8ptF_{(0,u,0)}^{\phantom i}u\hh'\hh\rangle
=[\hs\xi(u)]\hh\langle w\hh'\nnh,u\hh'\hh\rangle
+[\hs\xi(u\hh'\hh)]\hh\langle w\hh'\nnh,u\rangle
-[\hs\xi(w\hh'\hh)]\hh\langle u,u\hh'\hh\rangle\hs,
\end{array}
\end{equation}
with $\,u\hh'$ on the left-hand side standing for the vector 
$\,(0,u\hh'\nnh,0)\,$ tangent to $\,\{0\}\times\varSigma\times\{0\}\,$ at 
$\,(0,u,0)$.

In the remainder of the proof, $\,(-\hh\ve,\ve)\,$ and $\,K\,$ will 
repeatedly be replaced with smaller neighborhoods of $\,0\,$ in $\,\bbR\,$ 
and $\,V\nnh$, as needed for the argument.

\subsection{{\bf Case A:} $\,(\hskip2pt,\hskip.2pt)\,$ is 
sem\-i\-def\-i\-nite on\/ $\,H$\label{aa}}
By (\ref{rst}.a), $\,(\hskip2pt,\hskip.2pt)\,$ restricted to $\,H'$ is 
positive or negative definite. Furthermore,
\begin{equation}\label{xnz}
\xi(u)\ne0\,\mathrm{\ for\ every\ }\,u\in\varSigma\,\mathrm{\ such\ that\ 
}\,F(0,u,0)=0\hs,
\end{equation}
where $\,\varSigma\subseteq W$ is the $\,|\hskip3pt|$-unit sphere, 
$\,\xi=d\hh\phi_z$ and $\,F\,$ is given by (\ref{vsu}). In fact, suppose that 
$\,u\in\varSigma\,$ and $\,\xi(u)=0$. Since $\,\varSigma\subseteq W$, 
(\ref{sum}) then gives $\,u\in H'\nnh$, and so $\,\langle u,u\rangle\ne0\,$ in 
view of (\ref{rst}.b). Thus, $\,F(0,u,0)\ne0$, as one sees evaluating 
(\ref{wdv}.a) for $\,w\hh'$ equal to the vector $\,w\in W\hs$ with 
$\,\xi(w)\ne0\,$ which appears in (\ref{qxe}).

Assertion (ii) now follows: for any $\,(s_j,u_j,x_j),y_j,c,u\,$ and $\,x\,$ 
with the properties listed in the lines following (\ref{vsu}), including 
$\,F(0,u,0)=0$, (\ref{xnz}) and (\ref{cez}) yield (ii).

\subsection{{\bf Case B:} $\,(\hskip2pt,\hskip.2pt)\,$ is not 
sem\-i\-def\-i\-nite on\/ $\,H$\label{cc}}
This time, $\,(\hskip2pt,\hskip.2pt)\,$ restricted to $\,H'$ is nondegenerate 
and indefinite, cf.\ (\ref{rst}.a) and (\ref{dec}). As before, $\,\varSigma\,$ 
denotes the $\,|\hskip3pt|$-unit sphere in $\,W\nnh$, and $\,\xi=d\hh\phi_z$. 
In view of Remark~\ref{rgvlu}, the condition $\,(u,u)=0\,$ imposed on 
$\,u\in\varSigma\,$ defines a (possibly disconnected) 
co\-di\-men\-\hbox{sion\hh-}\hskip0ptone sub\-man\-i\-fold $\,\varPi\,$ of 
$\,\varSigma$, containing the subset 
$\,\varLambda=\varPi\cap H'\nh=\varPi\cap\hh\mathrm{Ker}\hskip2.2pt\xi\,$ 
(cf.\ (\ref{sum})).

The set $\,\varLambda$, nonempty due to in\-def\-i\-nite\-ness of 
$\,(\hskip2pt,\hskip.2pt)\,$ on $\,H'\nnh$, contains no critical points of the 
restriction $\,\xi:\varPi\to\bbR$. In fact, let $\,u\in\varLambda$, so that 
$\,(u,u)=0\,$ and $\,u\in\varSigma\hh'\nnh$, where 
$\,\varSigma\hh'\nh=\varSigma\cap H'$ is the $\,|\hskip3pt|$-unit sphere in 
$\,H'\nnh$. Remark~\ref{rgvlu} applied to $\,\varSigma\hh'$ shows that the 
functional $\,(u,\,\cdot\,)\,$ is nonzero on 
$\,T_u\varSigma\hh'\nh=T_u\varSigma\cap\hh\mathrm{Ker}\hskip2.2pt\xi\,$ (cf. 
(\ref{sum})), and $\,\xi\,$ is nonzero on $\,T_u\varSigma\,$ (as 
$\,T_u\varSigma\hh'\nh=T_u\varSigma\cap\hh\mathrm{Ker}\hskip2.2pt\xi\,$ is a 
proper subspace of $\,T_u\varSigma$). The restrictions of the 
functionals $\,(u,\,\cdot\,)\,$ and $\,\xi\,$ to $\,T_u\varSigma\,$ are thus 
linearly independent. Consequently, $\,\xi\,$ is nonzero on 
$\,T_u\varSigma\cap\hh\mathrm{Ker}\hskip2.2pt(u,\,\cdot\,)=T_u\varPi$, as 
required.

In terms of 
$\,\varOmega_0^{\phantom i}=(-\hh\ve,\ve)\times\varPi\times K$, these 
conclusions and (\ref{azu}.ii) imply that the nonempty set 
$\,\varLambda'\nh=\{0\}\times\varLambda\times\{0\}\,$ 
consists precisely of all zeros of the function $\,\varPhi:\varOmega\to\bbR\,$ 
given by (\ref{vsu}) which lie in the sub\-man\-i\-fold 
$\,\{0\}\times\varPi\times\{0\}\,$ of $\,\varOmega_0^{\phantom i}$, and that 
the restriction of $\,\varPhi\,$ to $\,\{0\}\times\varPi\times\{0\}\,$ has a 
nonzero differential at every point of $\,\varLambda'\nnh$. 
As $\,\varPi\,$ is compact, choosing smaller $\,\ve\,$ and 
$\,K\,$ we can ensure that $\,0\,$ is a regular value of the restriction 
$\,\varPhi:\varOmega_0^{\phantom i}\to\bbR$. In addition, the sub\-mer\-sion 
$\,s:\varOmega_0^{\phantom i}\to(-\hh\ve,\ve)$, given by 
$\,(s,u,x)\mapsto s$, is constant on $\,\{0\}\times\varPi\times\{0\}$, and so 
$\,\varPhi\,$ and $\,s$, as functions on $\,\varOmega_0^{\phantom i}$, have 
linearly independent differentials at each point of $\,\varLambda'\nnh$. 
For even smaller $\,\ve\,$ and $\,K$, we thus have 
$\,d\hh\varPhi\wedge ds\ne0\,$ everywhere in 
$\,\varOmega_0^{\phantom i}\cap\varPhi^{-1}(0)$. Thus, 
$\,\varOmega_0^{\phantom i}\cap\varPhi^{-1}(0)\,$ is a (possibly disconnected) 
co\-di\-men\-\hbox{sion\hh-}\hskip0ptone sub\-man\-i\-fold of 
$\,\varOmega_0^{\phantom i}$, and the additional condition $\,s=0\,$ defines a 
further co\-di\-men\-\hbox{sion\hh-}\hskip0ptone sub\-man\-i\-fold of 
$\,\varOmega_0^{\phantom i}\cap\varPhi^{-1}(0)\,$ (so that, in particular, 
$\,s\ne0\,$ on a dense subset of 
$\,\varOmega_0^{\phantom i}\cap\varPhi^{-1}(0)$). Next, for $\,F\,$ given by 
(\ref{vsu}),
\begin{enumerate}
  \def\theenumi{{\rm\roman{enumi}}}
\item[{\rm($*$)}] $F:\varOmega_0^{\phantom i}\to\mathcal{T}\hs$ has the 
value $\,0\,$ and nonzero differential at every point $\,(s,u,x)\,$ of the 
co\-di\-men\-\hbox{sion\hh-}\hskip0ptone sub\-man\-i\-fold 
$\,\varOmega_0^{\phantom i}\cap\varPhi^{-1}(0)\,$ of 
$\,\varOmega_0^{\phantom i}$, containing 
$\,\varLambda'\nh=\{0\}\times\varLambda\times\{0\}$.
\end{enumerate}

\subsection{{\bf Justification of} {\rm($*$)}\label{ja}}
As $\,P\nh=\phi^{-1}(0)$, (\ref{vez}) and (\ref{vsu}) along with the 
definitions of $\,\varPi\,$ and $\,\varOmega_0^{\phantom i}$ give $\,F=0\,$ on 
$\,\varOmega_0^{\phantom i}\cap\varPhi^{-1}(0)$, while 
$\,\varLambda'\nh\subseteq\varOmega_0^{\phantom i}\cap\varPhi^{-1}(0)\,$ 
due to the definition 
of $\,\varLambda\,$ and (\ref{azu}.ii). Note that the conclusion here is 
$\,F=0$, rather than just $\,s^2F=0$, since, as mentioned above, the subset 
$\,s\ne0\,$ is dense in $\,\varOmega_0^{\phantom i}\cap\varPhi^{-1}(0)$.

Since $\,\varPi\,$ is compact and we are free to make $\,\ve\,$ and $\,K\,$ 
smaller, ($*$) will follow if we prove it just for $\,(s,u,x)=(0,u,0)\,$ with 
$\,u\in\varLambda$, while restricting $\,F\,$ further, to the 
sub\-man\-i\-fold $\,\{0\}\times\varPi\times\{0\}\,$ of 
$\,\varOmega_0^{\phantom i}$. (As we saw, $\,\varLambda'$ is the intersection 
of $\,\varOmega_0^{\phantom i}\cap\varPhi^{-1}(0)\,$ with 
$\,\{0\}\times\varSigma\times\{0\}$.) We thus only need to show that, whenever 
$\,u\in\varSigma\,$ and $\,(u,u)=\xi(u)=0$, the right-hand side of 
(\ref{wdv}.b) is nonzero for suitable $\,w\hh'\nh\in\tzn\hs$ and 
$\,u\hh'\nh\in W\hs$ with $\,(u,u\hh'\hh)=0$. Let us thus set $\,u\hh'\nh=w$, 
so that $\,(u,u\hh'\hh)=0\hh\ne\hh\xi(u\hh'\hh)\,$ in view of (\ref{qxe}). Also, as 
$\,(\hskip2pt,\hskip.2pt)\,$ is nondegenerate on 
$\,H'$ (see (\ref{rst}.a)), we may choose 
$\,w\hh'\nh\in H'$ with $\,(w\hh'\nnh,u)\ne0\,$ (that is, by (\ref{qxe}), 
$\,\langle w\hh'\nnh,u\rangle\ne0$), and hence $\,\xi(w\hh'\hh)=0\,$ by 
(\ref{sum}). Then 
$\,[\hs\xi(u\hh'\hh)]\hh\langle w\hh'\nnh,u\rangle\ne0$, and the other two 
terms on the right-hand side of (\ref{wdv}.b) vanish.

\subsection{{\bf The final step}\label{fs}}
We now conclude the argument in Case B. First, by ($*$) and 
Lemma~\ref{nzero}(b), $\,F\nh\ne0\,$ everywhere in 
$\,U\smallsetminus\varPhi^{-1}(0)\,$ for some open set 
$\,\,U\smallsetminus\varOmega_0^{\phantom i}$ containing 
$\,\varOmega_0^{\phantom i}\cap\varPhi^{-1}(0)$.

Let us now fix any $\,(s_j,u_j,x_j),y_j,c,u\,$ and $\,x\,$ satisfying the 
conditions in the lines following (\ref{vsu}), which include (\ref{fsu}) 
and $\,F(0,u,0)=(u,u)=0$. We then also have $\,\xi(u)\ne0\,$ (which, in 
view of (\ref{cez}), yields (ii)). To see this, suppose that, on the 
contrary, $\,\xi(u)=0$. Thus, 
$\,(0,u,0)\in\varLambda'\nh
\subseteq\varOmega_0^{\phantom i}\cap\varPhi^{-1}(0)\,$ (see ($*$)) and, by 
(\ref{fsu}), 
$\,(s_j,u_j,x_j)\in\varOmega_0^{\phantom i}\smallsetminus\varPhi^{-1}(0)$. 
Also, $\,(s_j,u_j,x_j)\to(0,u,0)$, so that, if $\,j\,$ 
is sufficiently large, $\,(s_j,u_j,x_j)\,$ must lie in 
$\,U\smallsetminus\varPhi^{-1}(0)$. Hence, according to the last paragraph, 
$\,F(s_j,u_j,x_j)\ne0\,$ for large $\,j$, which contradicts (\ref{fsu}), 
completing the proof.

\section{Con\-for\-mal vector fields}\label{cv}
The symbol $\,\nabla\hs$ always stands both for the Le\-vi-Ci\-vi\-ta 
connection and the gradient operator of a given 
pseu\-\hbox{do\hs-}\hskip0ptRiem\-ann\-i\-an manifold $\,(M,g)$. If $\,v\,$ 
is a vector field on $\,M\nh$, denoting by 
$\,A=\nabla\nh v-[\hh\nabla\nh v]^*$ twice the skew-ad\-joint part of 
$\,\nabla\nh v$, with $\,\nabla\nh v:\tm\to\tm\,$ as in the lines 
following (\ref{cnd}), we can rewrite condition (\ref{lvg}) as
\begin{equation}\label{tnv}
2\hh\nabla\nh v\,\,=\,\,A\,\,+\,\,\phi\hskip2pt\mathrm{Id}\hs.
\end{equation}
We then obviously have $\,e^\tau\nnh\pounds_v(e^{-\tau}\hskip-2ptg)=\pounds_vg
-[(d\hh\tau)(v)]\hs g\,$ for any function $\,\tau:M\to\bbR$, and so, under 
the assumption (\ref{lvg}), the condition $\,(d\hh\tau)(v)=\phi\,$ is 
necessary and sufficient in order that $\,v\,$ be a Kil\-ling field for the 
metric $\,e^{-\tau}\nnh g\,$ con\-for\-mal to $\,g$. At points where 
$\,v\,$ is nonzero, $\,\tau\,$ with $\,(d\hh\tau)(v)=\phi\,$ always exists 
locally, due to solvability of ordinary differential equations. Thus, such 
points are never essential (cf.\ the Introduction).
\begin{lemma}\label{krnll}Let\/ $\,z\in M\,$ be a zero of a con\-for\-mal 
vector field\/ $\,v\,$ on a pseu\-\hbox{do\hs-}\hskip0ptRiem\-ann\-i\-an 
manifold\/ $\,(M,g)$, and let\/ $\,\phi\,$ be the function in\/ 
{\rm(\ref{lvg})}.
\begin{enumerate}
  \def\theenumi{{\rm\alph{enumi}}}
\item[{\rm(a)}] If\/ $\,\phi(z)\ne0$, then\/ 
$\,\mathrm{Ker}\hskip1.7pt\nabla\nh v_z$ is a null subspace of\/ $\,\tzm\nh$.
\item[{\rm(b)}] If\/ $\,\phi(z)=0$, then\/ 
$\,\mathrm{Ker}\hskip1.7pt\nabla\nh v_z$ has even co\-di\-men\-sion in\/ 
$\,\tzm\nh$, and its orthogonal complement is the image\/ 
$\,\nabla\nh v_z(\tzm)$.
\end{enumerate}
\end{lemma}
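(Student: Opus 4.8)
The plan is to reduce everything to linear algebra at the single point $\,z$, working with the endomorphism $\,B=\nabla\nh v_z$ of $\,\tzm\nnh$. Since $\,v_z=0$, the covariant derivative $\,\nabla\nh v_z$ is a well-defined endomorphism of $\,\tzm\nh$, and evaluating (\ref{tnv}) at $\,z\,$ gives $\,2B=A_z+\phi(z)\hskip2pt\mathrm{Id}$, where $\,A_z$ is skew-ad\-joint relative to $\,g_z$ because $\,A=\nabla\nh v-[\hh\nabla\nh v]^*$ satisfies $\,A^*\nh=-A\,$ identically. Both assertions then follow from two standard facts about a skew-ad\-joint operator on a vector space carrying a nondegenerate symmetric form, applied to $\,A_z$.

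For (a), note that $\,\mathrm{Ker}\hskip1.7ptB=\mathrm{Ker}\hskip1.7pt(A_z+\phi(z)\hskip2pt\mathrm{Id})\,$ is precisely the eigenspace of $\,A_z$ for the eigenvalue $\,-\phi(z)$, which is nonzero by hypothesis. The key step is the elementary computation that, for any two vectors $\,u,u\hh'$ in this eigenspace,
\[
-\phi(z)\hs g_z(u,u\hh')=g_z(A_z u,u\hh')=-g_z(u,A_z u\hh')=\phi(z)\hs g_z(u,u\hh')\hh,
\]
which forces $\,g_z(u,u\hh')=0\,$ since $\,\phi(z)\ne0$. Hence $\,\mathrm{Ker}\hskip1.7ptB\,$ is a null subspace of $\,\tzm\nnh$. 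No diagonalizability of $\,A_z$ is invoked, which is exactly why the argument survives in arbitrary signature, where a skew-ad\-joint operator need not be semisimple.

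For (b), with $\,\phi(z)=0\,$ the relation above becomes $\,B=A_z/2$, so $\,B\,$ is itself skew-ad\-joint. Using $\,g_z(Bx,y)=-g_z(x,By)\,$ together with nondegeneracy of $\,g_z$, a vector $\,u\,$ lies in $\,\mathrm{Ker}\hskip1.7ptB\,$ if and only if it is $\,g_z$-or\-thog\-o\-nal to the image $\,\nabla\nh v_z(\tzm)$; thus $\,(\mathrm{Ker}\hskip1.7ptB)^\perp\nh=\nabla\nh v_z(\tzm)$, which is the orthogonal-complement clause. To obtain even co\-di\-men\-sion, I would pass to the bilinear form $\,\omega(x,y)=g_z(Bx,y)$; skew-ad\-joint-ness makes $\,\omega\,$ alternating, its null\-space is exactly $\,\mathrm{Ker}\hskip1.7ptB$, and since the rank of any alternating form is even, $\,\mathrm{codim}\hskip1.7pt\mathrm{Ker}\hskip1.7ptB=\mathrm{rank}\hskip1.7pt\omega\,$ is even.

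The calculations are routine; the one point deserving care is the opening reduction -- verifying that (\ref{tnv}) holds pointwise at $\,z\,$ with $\,A_z$ genuinely skew-ad\-joint for the possibly indefinite inner product $\,g_z$, so that the two classical lemmas about skew operators apply without change. Once that is secured, there is no substantive obstacle.
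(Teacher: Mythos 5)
Your proposal is correct and takes essentially the same route as the paper: the paper's one-line proof likewise observes, via (\ref{tnv}), that $\mathrm{Ker}\,\nabla v_z\smallsetminus\{0\}$ consists of eigenvectors of the skew-adjoint operator $A_z$ for the eigenvalue $-\phi(z)$, and then tacitly invokes exactly the two linear-algebra facts you verify explicitly (isotropy of such an eigenspace when $\phi(z)\ne0$; and, when $\phi(z)=0$, the kernel--image duality together with evenness of the rank of the alternating form $g_z(A_z\,\cdot\,,\cdot\,)$). Your write-up simply supplies the details the paper calls ``immediate.''
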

\begin{proof}This is immediate since, in view of (\ref{tnv}), 
$\,\mathrm{Ker}\hskip1.7pt\nabla\nh v_z\nh\smallsetminus\{0\}\,$ consists 
of eigen\-vectors of the skew-ad\-joint operator $\,A_z:\tzm\to\tzm\,$ for 
the eigen\-value $\,-\hh\phi(z)$.
\end{proof}
It is well-known \cite{cartan-cr,cartan-as,ogiue,capocci} that (\ref{lvg}) 
implies further differential equations. In dimensions $\,n\ge3$, this allows 
us to identify con\-for\-mal vector fields on $\,(M,g)\,$ with parallel 
sections of a certain vector bundle over $\,M\nh$, carrying a natural 
connection; consequently, the dimension of the space of con\-for\-mal fields 
on $\,(M,g)\,$ cannot exceed $\,(n+1)(n+2)/2$. Specifically, if $\,v\,$ 
satisfies (\ref{lvg}) on $\,(M,g)$, and $\,\dim M=n\ge2$, then
\begin{equation}\label{nwn}
\begin{array}{l}
\mathrm{a)}\hskip6pt2\hh\nabla_{\!u}\nabla\nh v
=2\hs R(v\wedge u)+d\hh\phi\nh\otimes\nh u
-g(u,\,\cdot\,)\nnh\otimes\nnh\nabla\nh\phi
+[(d\hh\phi)(u)]\hs\mathrm{Id}\hs,\\
\mathrm{b)}\hskip6pt(1-n/2)[\nabla d\hh\phi](u,u)
=S(u,\nabla_{\!u}v)+S(u,\nabla_{\!u}v)+[\hh\nabla_{\!v}S\hh](u,u)
\end{array}
\end{equation}
for all vector fields $\,u$, where both sides in (\ref{nwn}.a) are bundle 
morphisms $\,\tm\to\tm\nh$, the symbol $\,R\,$ stands for the curvature 
tensor, with the sign convention 
$\,R(w\wedge u)\hh u\hh'\nh=\nabla_{\!u}\nabla_{\!w}u\hh'\nh
-\nabla_{\!w}\nabla_{\!u}u\hh'\nh+\nabla_{\![w,u]}u\hh'$ for vector fields 
$\,w,u,u\hh'\nnh$, and 
$\,S=\mathrm{Ric}-\hs(2n-2)^{-1}\,\sigma g\,$ is the {\it Schouten tensor}, 
with $\,\sigma\,$ denoting the scalar curvature. In coordinates, 
$\,2\hh v^{\hs l}{}_{,\hh kj}\nh=2\hh R_{\hh pjk}{}^{\hh l}v^{\hh p}\nh
+\phi_{\nh,\hh k}^{\phantom i}\delta_{\nh j}^{\hs l}\nh
-\phi^{\hs,\hh l}g_{jk}\nh+\phi_{\nh,\hs j}^{\phantom i}\delta_k^{\hs l}$ and 
$\,(1-n/2)\phi_{\nh,\hs jk}\nh=S_{jp}v^{\hs p}{}_{,\hh k}\nh
+S_{kp}v^{\hs p}{}_{,\hs j}\nh+S_{jk,\hh p}v^{\hh p}\nnh$. In fact, the 
coordinate version of (\ref{nwn}.a) follows from the more general fact that, 
given a $\,1$-form $\,\xi\,$ on a manifold with a tor\-sion\-free connection, 
setting $\,a_{jk}^{\phantom i}\nh=\xi_{\hh k,\hs j}^{\phantom i}\nh
+\xi_{\hh j,\hs k}^{\phantom i}$, one obtains 
$\,\xi_{\hh l,\hs kj}^{\phantom i}=R_{lkj}{}^p\hs\xi_p^{\phantom i}
+(a_{lj,\hs k}^{\phantom i}\nh+a_{lk,\hs j}^{\phantom i}\nh
-a_{kj,\hs l}^{\phantom i})/2$, in view of the Ric\-ci and Bian\-chi 
identities, cf.\ \cite[the bottom of p. 572]{derdzinski-roter}.

Equality (\ref{nwn}.b) can be justified as follows (with (\ref{lvg}) and 
(\ref{nwn}) always meaning the coordinate versions). First, due to the second 
Bian\-chi identity, 
$\,R_{\hh pjk}{}^{\hh l}{}_{,\hs l}=R_{kp,\hh j}\nh-R_{kj,\hh p}$, while the 
Boch\-ner formula (contracted Ric\-ci identity) gives 
$\,v^{\hs l}{}_{,\hh kl}\nh=R_{kp}v^{\hh p}\nh
+n\hs\phi_{,\hh k}/2\,$ (as $\,v^{\hh p}{}_{,\hh p}\nh
=n\hs\phi/2$), and hence $\,v^{\hs l}{}_{,\hh klj}
=R_{kp,\hh j}v^{\hh p}\nh+R_{kp}v^{\hh p}{}_{,\hs j}\nh
+n\hs\phi_{,\hh jk}/2$. Subtracting the Ric\-ci 
identity $\,v^{\hs l}{}_{,\hh kjl}\nh-v^{\hs l}{}_{,\hh klj}\nh
=R_{\hh pjk}{}^{\hh l}v^{\hh p}{}_{,\hh l}\nh+R_{jp}v^{\hh p}{}_{,\hh k}$ from 
$\,1/2\,$ times the formula obtained by applying $\,\nabla_{\!l}^{\phantom i}$ 
to (\ref{nwn}.a), and using the above expressions for 
$\,R_{\hh pjk}{}^{\hh l}{}_{,\hs l}$ and $\,v^{\hs l}{}_{,\hh klj}$, we see 
that 
$\,(1-n/2)\phi_{\nh,\hs jk}\nh=R_{jp}v^{\hs p}{}_{,\hh k}\nh
+R_{kp}v^{\hs p}{}_{,\hs j}\nh+R_{jk,\hh p}v^{\hh p}\nh
+\phi_{\nh,\hh l}{}^lg_{jk}/2$. Now 
(\ref{nwn}.b) easily follows from (\ref{lvg}) since 
$\,R_{jk}\nh=S_{jk}\nh+\hs(2n-2)^{-1}\,\sigma g_{jk}$ and 
$\,(1-n)\hh\phi_{\nh,\hh k}{}^k\nh=\sigma\phi+(d\hh\sigma)(v)$. The last 
relation is another general consequence of (\ref{lvg}): 
$\,\phi_{\nh,\hh k}{}^k\nh=(\phi\hs g_{jk})^{,\hh jk}\nh
=(v_{j,\hh k}\nh+v_{\hh k,\hs j})^{,\hh jk}\nh
=2\hh v^{\hs l}{}_{,\hh kl}{}^k\nh=2(R_{kp}v^{\hh p})^{,\hh k}\nh
+n\hs\phi_{\nh,\hh k}{}^k$ in view of the equality 
$\,v^{\hh j,\hh k}{}_{jk}\nh=v^{\hh j,\hh k}{}_{kj}$ (immediate from the 
Ric\-ci identity) and the Boch\-ner formula just mentioned; on the other hand, 
$\,2(R_{kp}v^{\hh p})^{,\hh k}\nh=
R^{jk}(v_{j,\hh k}\nh+v_{\hh k,\hs j})+2R_{kp}{}^{,\hh k}v^{\hh p}\nh=
\sigma\phi+(d\hh\sigma)(v)$, as the Bian\-chi identity for the Ric\-ci tensor 
gives $\,2R_{kp}{}^{,\hh k}\nh=\sigma_{,\hh p}$.

\section{The case of pseu\-\hbox{do\hs-}\hskip0ptEuclid\-e\-an 
spaces}\label{pe}
Let $\,V\hs$ be an \hbox{$\,n\hh$-}\hskip0ptdi\-men\-sion\-al 
pseu\-\hbox{do\hs-}\hskip0ptEuclid\-e\-an space with the inner product 
$\,\langle\,,\rangle$. For any $\,w,u\in V\nnh$, any skew-ad\-joint 
en\-do\-mor\-phism $\,B:V\to V\nnh$, and $\,c\in \bbR$, the formula
\begin{equation}\label{vxe}
v_x\,=\,\hs w\,+\,Bx\,+\,cx\,+\,2\langle u,x\rangle\hh x\,
-\,\langle x,x\rangle\hh u
\end{equation}
is easily seen to define a con\-for\-mal vector field $\,v\,$ on 
$\,(V\nnh,g)$, where $\,g\,$ is the constant flat metric correspoding to 
$\,\langle\,,\rangle$. If $\,n\ge3$, the resulting vector space of 
con\-for\-mal fields has the maximum possible dimension $\,(n+1)(n+2)/2\,$ 
(See the lines preceding (\ref{nwn}).) Thus, (\ref{vxe}) describes {\it all\/} 
con\-for\-mal fields on any open sub\-man\-i\-fold of $\,(V\nnh,g)$.

Defining $\,v\,$ by (\ref{vxe}) with $\,w=0\,$ and $\,c=0$, we see that 
$\,v=0\,$ everywhere in the set $\,\varPi
=\{x\in\mathrm{Ker}\hskip1.7ptB:\langle u,x\rangle=\langle x,x\rangle=0\}$. 
If, in addition, $\,u\,$ does not lie in the image $\,B(V)$, then all zeros 
$\,x\,$ of $\,v\,$ sufficiently close to $\,0\,$ lie in $\,\varPi$. In fact, 
as $\,0=\langle v_x,x\rangle=\langle u,x\rangle\langle x,x\rangle$, it follows 
that $\,\langle x,x\rangle=0$, or else the equality 
$\,0=v_x\nh=Bx+2\langle u,x\rangle\hh x-\langle x,x\rangle\hh u\,$ with 
$\,\langle u,x\rangle=0\,$ would give $\,u\in B(V)$. Thus, 
$\,0=v_x\nh=Bx+2\langle u,x\rangle\hh x$. Such $\,x\,$ which also have the 
property that $\,\langle u,x\rangle\ne0\,$ cannot be arbitrarily close to 
$\,0$, since they all lie in hyperplanes given by 
$\,2\langle u,x\rangle=-\hs b$, where $\,b\,$ ranges over nonzero eigenvalues 
of $\,B$. Consequently, $\,\langle u,x\rangle=\langle x,x\rangle=0\,$ for all 
zeros of $\,v\,$ near $\,0$, and then also $\,0=v_x\nh=Bx$.
\begin{example}\label{gensg}
For $\,v\,$ as in the last paragraph, let $\,Z'$ be the connected 
component of the zero set of $\,v\,$ containing $\,\varPi\nnh$. If $\,B\,$ and 
$\,u\,$ are chosen so that the restriction of $\,\langle\,,\rangle\,$ to 
$\,u^\perp\nh\cap\mathrm{Ker}\hskip1.7ptB\,$ is not sem\-i\-def\-i\-nite, 
$\,Z'$ will have a singularity at $\,0\,$ (Remark~\ref{snglr}(a)). On the 
other hand, sem\-i\-def\-i\-nite\-ness of $\,\langle\,,\rangle\,$ on 
$\,u^\perp\nh\cap\mathrm{Ker}\hskip1.7ptB\,$ implies that $\,\varPi\,$ is a 
sub\-man\-i\-fold of $\,V\hs$ (see Remark~\ref{semdf}), while, if 
$\,\langle\,,\rangle\,$ is indefinite, one can also choose such $\,B\,$ and 
$\,u\,$ for which, in addition, $\,\dim\varPi\ge1\,$ and 
$\,\dim V\nnh-\dim\varPi\hs$ is odd.
\end{example}
\begin{example}\label{srfcs}In $\,\rto$ with the Cartesian coordinates 
$\,x^{\hs j}\nnh$, let $\,g\,$ be the flat metric given by 
$\,g_{12}^{\phantom i}\nh=g_{21}^{\phantom i}\nh=1\,$ and 
$\,g_{11}^{\phantom i}\nh=g_{22}^{\phantom i}\nh=0$. The con\-for\-mal vector 
fields $\,v\,$ for $\,g\,$ are obviously characterized by the 
par\-tial de\-riv\-a\-tive conditions 
$\,\partial_1^{\phantom i}v_1^{\phantom i}\nh
=\partial_2^{\phantom i}v_2^{\phantom i}\nh=0$, that is, 
$\,\partial_1^{\phantom i}v^2\nnh=\partial_2^{\phantom i}v^1\nnh=0$. Hence 
$\,v^1$ may be any function of $\,x^1$ and $\,v^2$ any function of 
$\,x^2\nnh$. Thus, the zero set of $\,v\,$ can have the form 
$\,\varXi\times\varXi'\nnh$, with any closed sets 
$\,\varXi,\varXi'\subseteq\bbR$. 
\end{example}

\section{Intermediate sub\-man\-i\-folds}\label{is}
The proof of Theorem~\ref{nbzrs} under the assumption (\ref{cnd}.b), given in 
Section~\ref{cb}, uses a sub\-man\-i\-fold $\,N\hs$ containing all zeros of a 
given con\-for\-mal vector field that lie near a prescribed zero $\,z$, and 
having the tangent space $\,\mathrm{Ker}\hskip1.7pt\nabla\nh v_z$ at $\,z$. 
According to Example~\ref{zrset}, such $\,N\hs$ always exists. For easy 
reference, this fact and some properties of $\,N\hs$ are gathered in the 
following lemma. Radial limit directions of a set are defined at the end of 
Section~\ref{cl}.
\begin{lemma}\label{intsb}Given a con\-for\-mal vector field\/ $\,v\,$ on a 
pseu\-\hbox{do\hs-}\hskip0ptRiem\-ann\-i\-an manifold\/ $\,(M,g)\,$ and a 
zero\/ $\,z\in M\hs$ of $\,v$, there exists a sub\-man\-i\-fold\/ $\,N$ of\/ 
$\,M\hs$ such that
\begin{enumerate}
  \def\theenumi{{\rm\roman{enumi}}}
\item[{\rm(i)}] all zeros of\/ $\,v\,$ sufficiently close to\/ $\,z\,$ lie 
in\/ $\,N\nnh$,
\item[{\rm(ii)}] $\txn\nh=\hh\mathrm{Ker}\hskip1.7pt\nabla\nh v_x$ at every\/ 
$\,x\in N$ at which\/ $\,v_x\nh=0\,$ and 
$\,\mathrm{rank}\hskip2.9pt\nabla\nh v_x\nh
=\mathrm{rank}\hskip2.9pt\nabla\nh v_z$,
\item[{\rm(iii)}] all radial limit directions of the zero set of\/ $\,v\,$ 
at\/ $\,z\,$ lie in\/ $\,\tzn\nh=\hh\mathrm{Ker}\hskip1.7pt\nabla\nh v_z$,
\item[{\rm(iv)}] if\/ $\,\mathrm{rank}\hskip2.9pt\nabla\nh v_x\nh
=\mathrm{rank}\hskip2.9pt\nabla\nh v_z$ at a point\/ $\,x\in N$ at which\/ 
$\,v_x\nh=0$, and\/ $\,w\,$ is a vector field on $\,M\hs$ with\/ 
$\,w_x\nh\in\hh\mathrm{Ker}\hskip1.7pt\nabla\nh v_x$, while\/ $\,\phi(x)=0$, 
for\/ $\,\phi\,$ in\/ {\rm(\ref{lvg})}, then the function\/ 
$\,Q=2\hh g(w,v)\,$ restricted to\/ $\,N\hs$ has a critical point at $\,x$, 
and its Hess\-i\-an\/ $\,\partial\hs d\hh Q_x$ equals the right-hand side in 
Theorem\/~{\rm\ref{tgdir}(d)} with\/ $\,\langle\,,\rangle=g_x$ and\/ $\,w=w_x$.
\end{enumerate}
\end{lemma}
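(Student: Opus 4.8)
The plan is to obtain $N$ directly from Example~\ref{zrset}, taking $\,\mathcal{E}=\tm\,$ and $\,\psi=v$. Since $\,v_z=0$, one has $\,\partial\hh v_z=\nabla\nh v_z$, so, with $\,r=\mathrm{rank}\hskip2.9pt\nabla\nh v_z$, that example produces an $\,(n-r)$-dimensional submanifold $\,N\,$ through $\,z\,$ containing every zero of $\,v\,$ near $\,z$ — which is (i) — and satisfying $\,\mathrm{Ker}\hskip1.7pt\nabla\nh v_x\subseteq\txn\,$ at each zero $\,x\in N$. For (ii) I would simply count dimensions: $\,\dim\txn=n-r\,$ while $\,\dim\mathrm{Ker}\hskip1.7pt\nabla\nh v_x=n-\mathrm{rank}\hskip2.9pt\nabla\nh v_x$, so the inclusion becomes an equality exactly when $\,\mathrm{rank}\hskip2.9pt\nabla\nh v_x=r$; in particular $\,\tzn=\mathrm{Ker}\hskip1.7pt\nabla\nh v_z$.

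Assertion (iii) I would read off Remark~\ref{submf}(ii) with the same $\,\psi=v$. A radial limit direction of the zero set $\,Z\,$ at $\,z\,$ is, by definition, a connecting limit for sequences $\,x_j=z\,$ and $\,y_j\in Z\,$ tending to $\,z$; as $\,v\,$ vanishes at $\,z\,$ and at every $\,y_j$, that remark confines the limiting line to $\,\mathrm{Ker}\hskip1.7pt\partial\hh v_z=\mathrm{Ker}\hskip1.7pt\nabla\nh v_z=\tzn$.

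The real content is (iv), which I would derive from Remark~\ref{hessn} applied at the point $\,x\,$ (in the role of $\,z\,$ there), with $\,\psi=v\,$ and the $\,1$-form $\,\xi=2\hh g(w,\,\cdot\,)$, so that $\,Q=2\hh g(w,v)=\xi(v)$. Since $\,v_x=0\,$ and $\,\mathrm{rank}\hskip2.9pt\nabla\nh v_x=r$, part (ii) already gives $\,\txn=\mathrm{Ker}\hskip1.7pt\nabla\nh v_x$, whence Remark~\ref{hessn}(i) yields the critical point $\,d\hh Q_x=0$. To invoke Remark~\ref{hessn}(ii) I must verify its hypothesis $\,\nabla\nh v_x(\txm)\subseteq\mathrm{Ker}\hskip2.2pt\xi_x$: because $\,\phi(x)=0$, Lemma~\ref{krnll}(b) identifies $\,\mathrm{Ker}\hskip1.7pt\nabla\nh v_x\,$ with $\,[\nabla\nh v_x(\txm)]^\perp$, so $\,w_x\in\mathrm{Ker}\hskip1.7pt\nabla\nh v_x\,$ forces $\,\xi_x=2\hh g(w_x,\,\cdot\,)\,$ to annihilate that image. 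Remark~\ref{hessn}(ii) then delivers, for $\,u\in\txn$,
\[
\partial\hs d\hh Q_x(u,u)\,=\,\xi(\theta(u,u))\hs,\qquad\theta(u,u)=[\nabla_{\!u}(\nabla\nh v)]u\hs.
\]

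To finish I would compute $\,\theta(u,u)\,$ by applying both sides of (\ref{nwn}.a) to $\,u$; the term $\,R(v\wedge u)u\,$ drops out because $\,v_x=0$, leaving $\,\theta(u,u)=[(d\hh\phi)(u)]\hh u-g(u,u)\hh\nabla\nh\phi/2$. Pairing with $\,\xi_x=2\hh g(w_x,\,\cdot\,)\,$ and using $\,g(w_x,\nabla\nh\phi)=(d\hh\phi)(w_x)\,$ gives $\,\partial\hs d\hh Q_x(u,u)=2\hh[(d\hh\phi)(u)]\hh g(w_x,u)-[(d\hh\phi)(w_x)]\hh g(u,u)$, which is precisely the value on the diagonal of the symmetric bilinear form in Theorem~\ref{tgdir}(d) for $\,\langle\,,\rangle=g_x\,$ and $\,w=w_x$; polarization upgrades this to equality of the two forms on $\,\txn$. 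The one delicate point will be confirming that the derivatives of the (in general non-parallel) field $\,w\,$ do not contribute to the Hessian: they enter only through terms of the shape $\,\xi_{a,j}\hh\psi^a{}_{,k}\hh u^ju^k$, which vanish because $\,u\in\txn=\mathrm{Ker}\hskip1.7pt\nabla\nh v_x\,$ makes $\,\nabla_{\!u}v=0$, and this cancellation — together with the vanishing of $\,\xi_x\,$ on $\,\nabla\nh v_x(\txm)$ — is exactly what licenses the use of Remark~\ref{hessn}(ii).
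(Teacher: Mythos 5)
Your proposal is correct and takes essentially the same route as the paper's own proof: $N$ is obtained from Example~\ref{zrset} with $\mathcal{E}=\tm$ and $\psi=v$ (giving (i), (ii)), assertion (iii) comes from Remark~\ref{submf} (you invoke part (ii) directly, the paper part (i) via $N$ — both are valid), and (iv) is established exactly as in the paper by combining Lemma~\ref{krnll}(b) (to check that $\xi_x=2\hh g(w_x,\,\cdot\,)$ kills the image $\nabla\nh v_x(\txm)$), Remark~\ref{hessn}(ii), and the second-derivative formula (\ref{nwn}.a) with the curvature term annihilated by $v_x=0$. Your explicit evaluation of $\theta(u,u)$ and the check that derivatives of the non-parallel field $w$ do not contribute merely spell out details the paper leaves implicit.
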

\begin{proof}For $\,N\hs$ constructed in Example~\ref{zrset}, with 
$\,\mathcal{E}=\tm\,$ and $\,\psi=v$, one clearly has (i) and (ii), while 
(iii) follows from Remark~\ref{submf}(i). Finally, in (iv), the condition 
$\,w_x\nh\in\mathrm{Ker}\hskip1.7pt\nabla\nh v_x$ implies, by 
Lemma~\ref{krnll}(b), that the value $\,\xi_x$ of the $\,1$-form 
$\,\xi=2\hh g(w,\,\cdot\,)\,$ vanishes on the image 
$\,\nabla\nh v_x(\txm)$. Now (iv) is obvious from the Hess\-i\-an formula in 
Remark~\ref{hessn}(ii), combined with the expression for the second covariant 
derivative of $\,v\,$ at $\,x\,$ provided by (\ref{nwn}.a), in which the 
curvature term vanishes since $\,v_x\nh=0$.
\end{proof}

\section{Con\-for\-mal fields along geodesics}\label{cg}
Given a con\-for\-mal vector field $\,v\,$ on a 
pseu\-\hbox{do\hs-}\hskip0ptRiem\-ann\-i\-an manifold $\,(M,g)\,$ of dimension 
$\,n\ge3$, with (\ref{lvg}), let us consider a parallel vector field 
$\,t\mapsto u(t)\in T_{x(t)}M\,$ along a geodesic $\,t\mapsto x(t)\,$ of 
$\,(M,g)$. Transvecting the coordinate versions of (\ref{lvg}) and (\ref{nwn}) 
with $\,\dot x^{\hh j}\dot x^{\hs k}$ or, respectively, 
$\,\dot x^{\hh j}u^{\hs k}\nnh$, we obtain
\begin{equation}\label{vdx}
\begin{array}{rl}
\mathrm{i)}&\hskip3pt2\hh\langle v,\dot x\rangle\dot{\,}\hs
=\,\phi\langle\dot x,\dot x\rangle\hs,\\
\mathrm{ii)}&\hskip3pt2\hh\nabla_{\!\dot x}\nabla_{\!u}v
=2\hs R(v\wedge \dot x)\hh u+[(d\hh\phi)(u)]\hs\dot x
+\hskip3.3pt\dot{\hskip-3.3pt\phi}\hh u
-\langle\dot x,u\rangle\nabla\nh\phi\hs,\\
\mathrm{iii)}&\hskip3pt(1-n/2)[(d\hh\phi)(u)]\hs\dot{\,}
=S(u,\nabla_{\!\dot x}v)+S(\dot x,\nabla_{\!u}v)
+[\hh\nabla_{\!v}S\hh](u,\dot x)\hs,
\end{array}
\end{equation}
where $\,(\hskip1.7pt,\hskip1pt)\dot{\,}\nh=\hh d/dt$, 
cf.\ (\ref{dfe}), the symbol $\,\langle\,,\rangle\,$ stands for $\,g$, and the 
dependence of both sides on $\,t\,$ suppressed in the notation: 
$\,v=v_{x(t)},\hs\phi=\phi(x(t))$. With $\,u=\dot x$, (\ref{vdx}) gives
\begin{equation}\label{ndx}
\begin{array}{rl}
\mathrm{i)}&\hskip3pt\nabla_{\!\dot x}\nabla_{\!\dot x}v
=R(v\wedge \dot x)\hh\dot x+\hskip2.8pt\dot{\hskip-3.3pt\phi}\hs\dot x
-\langle\dot x,\dot x\rangle\nabla\nh\phi/2\hs,\\
\mathrm{ii)}&\hskip3pt(1-n/2)\hh\hskip3.3pt\ddot{\hskip-3.3pt\phi}
=2S(\dot x,\nabla_{\!\dot x}v)+[\hh\nabla_{\!v}S\hh](\dot x,\dot x)\hs.
\end{array}
\end{equation}
As a consequence of (\ref{ndx}.i), condition (\ref{lvg}) implies that
\begin{equation}\label{nwd}
\nabla_{\!\dot x}\nabla_{\!\dot x}(v\wedge\dot x)
=[R(v\wedge\dot x)\hh\dot x]\wedge\dot x\hskip12pt\mathrm{\ if\ }
\,t\mapsto x(t)\,\mathrm{\ is\ a\ null\ geodesic.}
\end{equation}
If, in addition, $\,v\,$ is tangent to a null geodesic $\,t\mapsto x(t)$, that 
is, $\,v_{x(t)}$ is a multiple of $\,\dot x(t)\,$ for every $\,t$, then, by 
(\ref{ndx}.i) with $\,v\wedge\dot x=0\,$ and $\,\langle\dot x,\dot x\rangle=0$,
\begin{equation}\label{nvf}
\nabla_{\!\dot x}\nabla_{\!\dot x}v\,\,
\,=\,\hskip2.8pt\dot{\hskip-3.3pt\phi}\hs\dot x\hs.
\end{equation}
\begin{remark}\label{phiez}If two distinct zeros\/ $\,z,x\,$ of a 
con\-for\-mal vector field\/ $\,v\,$ on a 
pseu\-\hbox{do\hs-}\hskip0ptRiem\-ann\-i\-an manifold\/ $\,(M,g)\,$ are 
joined by a non-null geodesic segment $\,\varGamma$ and\/ $\,\phi\,$ is the 
function in\/ {\rm(\ref{lvg})}, then\/ $\,\phi=0\,$ somewhere in\/ 
$\,\varGamma\smallsetminus\{z,x\}$. 

This is clear since, in (\ref{vdx}.i), $\,\langle v,\dot x\rangle=0\,$ at both 
ends of the parameter interval.
\end{remark}
In the next lemma, $\,T\hskip-3.1pt_\varGamma^{\phantom i}\nh M\,$ denotes 
the restriction of $\,\tm\,$ to the \hbox{one\hh-}\hskip0ptdi\-men\-sion\-al 
null immersed sub\-man\-i\-fold $\,\varGamma\nnh$. Thus, $\,T\nh\varGamma$ 
and $\,(T\nh\varGamma)^\perp$ are sub\-bun\-dles of 
$\,T\hskip-3.1pt_\varGamma^{\phantom i}\nh M\nh$, while $\,g\,$ obviously 
induces a (nondegenerate) fibre metric in the quotient bundle 
$\,(T\nh\varGamma)^\perp\nnh/\hh(T\nh\varGamma)$. By 
$\,\mathfrak{conf}\,[(T\nh\varGamma)^\perp\nnh/\hh(T\nh\varGamma)]\,$ we 
denote the vector bundle over $\,\varGamma$ whose sections are 
infinitesimal con\-for\-mal en\-do\-mor\-phism of 
$\,(T\nh\varGamma)^\perp\nnh/\hh(T\nh\varGamma)$. (An {\it infinitesimal 
con\-for\-mal en\-do\-mor\-phism\/} is one with the self-ad\-joint part equal 
to a function times $\,\mathrm{Id}$.)
\begin{lemma}\label{nvprl}Let a con\-for\-mal vector field\/ $\,v\hs$ on a 
pseu\-\hbox{do\hs-}\hskip0ptRiem\-ann\-i\-an manifold\/ $\,(M,g)\,$ be 
tangent to a nontrivial null geodesic\/ $\,\varGamma$ with a parametrization 
$\,[\hs0,1\hh]\ni t\mapsto x(t)$, so that\/ $\,x(0)=y\,$ and\/ 
$\,\nabla_{\!\dot x}v=\lambda\dot x\,$ at\/ $\,t=0$, for some\/ $\,y\in M\hs$ 
and\/ $\,\lambda\in\bbR$.
\begin{enumerate}
  \def\theenumi{{\rm\alph{enumi}}}
\item[{\rm(a)}] Along\/ $\,\varGamma\nnh$, we have\/ 
$\,\nabla_{\!\dot x}v=[\lambda+\phi-\phi(y)]\hh\dot x$, with\/ $\,\phi\,$ as 
in\/ {\rm(\ref{lvg})}.
\item[{\rm(b)}] If\/ $\,\phi\,$ is constant along\/ $\,\varGamma$ and\/ 
$\,n=\dim M\ge2$, then\/ $\,\nabla\nh v$, restricted to\/ $\,\varGamma\nnh$,
\begin{enumerate}
  \def\theenumi{{\rm\roman{enumi}}}
\item[{\rm(i)}] acts on\/ $\,T\nh\varGamma$ and\/ 
$\,(T\hskip-3.1pt_\varGamma^{\phantom i}\nh M)/(T\nh\varGamma)^\perp$ as the 
multiplications by\/ $\,\lambda\,$ and\/ $\,\phi\hs-\nh\lambda$,
\item[{\rm(ii)}] descends to a parallel section of\/ 
$\,\mathfrak{conf}\,[(T\nh\varGamma)^\perp\nnh/\hh(T\nh\varGamma)]\,$ with 
the trace\/ $\,(n-2)\hh\phi/2$,
\item[{\rm(iii)}] has the same characteristic polynomial at all points of\/ 
$\,\varGamma\nnh$.
\end{enumerate}
\end{enumerate}
\end{lemma}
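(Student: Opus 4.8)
The plan is to establish (a) by a one-dimensional integration and then derive every part of (b) from (a) together with the structural identity (\ref{tnv}) and the Jacobi-type equation (\ref{vdx}.ii). For (a), since $v$ is tangent to $\varGamma$, I would write $v=\mu\,\dot x$ along $\varGamma$ for a scalar function $\mu$; because $\varGamma$ is a geodesic, $\nabla_{\dot x}\dot x=0$, so $\nabla_{\dot x}v=f\,\dot x$ with $f=\dot\mu$ and $f(0)=\lambda$. Differentiating once more gives $\nabla_{\dot x}\nabla_{\dot x}v=\dot f\,\dot x$. Comparing this with (\ref{nvf}), which applies precisely because $v$ is tangent to a null geodesic, yields $\dot f=\dot\phi$, so $f-\phi$ is constant along $\varGamma$ and equals $\lambda-\phi(y)$ at $t=0$. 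Since $\dot x\neq0$ (as $\varGamma$ is nontrivial), this is exactly $\nabla_{\dot x}v=[\lambda+\phi-\phi(y)]\,\dot x$, proving (a).

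For (b) I assume $\phi$ constant, so (a) reduces to $\nabla_{\dot x}v=\lambda\,\dot x$, and hence $A\dot x=(2\lambda-\phi)\,\dot x$ by (\ref{tnv}). The first task is to show $\nabla v$ preserves the filtration $T\varGamma\subseteq(T\varGamma)^\perp\subseteq T_\varGamma M$. For $u\in(T\varGamma)^\perp$, skew-adjointness of $A$ gives $2\langle\nabla_u v,\dot x\rangle=\langle Au,\dot x\rangle+\phi\langle u,\dot x\rangle=-\langle u,A\dot x\rangle=-(2\lambda-\phi)\langle u,\dot x\rangle=0$, so $\nabla_u v\in(T\varGamma)^\perp$. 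The same computation without assuming $\langle u,\dot x\rangle=0$ gives $\langle\nabla_u v,\dot x\rangle=(\phi-\lambda)\langle u,\dot x\rangle$. Since the functional $w\mapsto\langle w,\dot x\rangle$ descends to $T_\varGamma M/(T\varGamma)^\perp$, this shows $\nabla v$ acts there by $\phi-\lambda$, while on $T\varGamma$ it acts by $\lambda$ directly from $\nabla_{\dot x}v=\lambda\,\dot x$. This is assertion (i).

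For (ii), note that $A=2\nabla v-\phi\,\mathrm{Id}$ preserves both $T\varGamma$ (as $A\dot x=(2\lambda-\phi)\dot x$) and $(T\varGamma)^\perp$, hence descends to a skew-adjoint endomorphism of $(T\varGamma)^\perp/(T\varGamma)$, the induced fibre metric being the one coming from the parallel $g$. Therefore the descended $\nabla v$ equals this skew part plus $\tfrac{\phi}{2}\mathrm{Id}$; its self-adjoint part is $\tfrac{\phi}{2}\mathrm{Id}$, so it is infinitesimal conformal, and its trace is $(n-2)\phi/2$ because the skew part is traceless and $\dim[(T\varGamma)^\perp/(T\varGamma)]=n-2$. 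For parallelism I would take $u$ a parallel field along $\varGamma$ with $u(0)\in(T\varGamma)^\perp$; it stays in $(T\varGamma)^\perp$ since $\langle u,\dot x\rangle$ is constant. Evaluating (\ref{vdx}.ii), the curvature term vanishes as $v\wedge\dot x=0$, the term $\dot\phi\,u$ vanishes as $\phi$ is constant, and $\langle\dot x,u\rangle\nabla\phi$ vanishes as $u\in(T\varGamma)^\perp$, leaving $2\nabla_{\dot x}\nabla_u v=[(d\phi)(u)]\,\dot x\in T\varGamma$. Thus, for the quotient connection, $(\nabla_{\dot x}\overline{\nabla v})[u]=[\nabla_{\dot x}\nabla_u v]=0$, so the descended section is parallel. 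I expect this parallelism to be the \emph{main obstacle}, since it is the one step requiring the second-order identity and a careful check that all the surviving terms land in $T\varGamma$.

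Finally, (iii) follows formally: the characteristic polynomial of $\nabla v$ on $T_\varGamma M$ factors through the filtration as $(\xi-\lambda)\,(\xi-(\phi-\lambda))\,\chi(\xi)$, where $\chi$ is the characteristic polynomial of the descended endomorphism on $(T\varGamma)^\perp/(T\varGamma)$. The first two factors are constant because $\phi$, and hence $\lambda$, is constant along $\varGamma$, while $\chi$ is constant because a parallel endomorphism has constant characteristic polynomial, parallel transport conjugating it between fibres. A consistency check on the trace, $\lambda+(\phi-\lambda)+(n-2)\phi/2=n\phi/2=\mathrm{div}\,v$, confirms the computation.
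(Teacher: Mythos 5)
Your proof is correct and follows essentially the same route as the paper's: part (a) by integrating (\ref{nvf}), part (b\hs-i) from (\ref{tnv}) and skew-adjointness of $A$, parallelism in (b\hs-ii) by evaluating (\ref{vdx}.ii) on a parallel field orthogonal to $\dot x$, and (b\hs-iii) by factoring the characteristic polynomial through the filtration $T\nh\varGamma\subseteq(T\nh\varGamma)^\perp\subseteq T\hskip-3.1pt_\varGamma^{\phantom i}\nh M$. The only cosmetic difference is that you introduce $v=\mu\,\dot x$ explicitly before integrating, and your trace identity $n\phi/2=\mathrm{div}\,v$ is a nice sanity check not present in the paper.
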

\begin{proof}Integrating (\ref{nvf}), we obtain (a), while (b\hs-i) for 
$\,T\nh\varGamma$ is obvious from (a) with $\,\phi=\phi(y)$. Next, in view of 
(\ref{tnv}) with $\,A\nnh^*\nh=-\hs A$, the sub\-bun\-dles $\,T\nh\varGamma$ and 
$\,(T\nh\varGamma)^\perp$ of $\,T\hskip-3.1pt_\varGamma^{\phantom i}\nh M\,$ 
are $\,\nabla\nh v$-in\-var\-i\-ant; $\,\nabla\nh v$-in\-var\-i\-ance of 
the latter follows from that of the former, since it is the same as 
$\,A\nh$-in\-var\-i\-ance. Thus, $\,\nabla\nh v\,$ descends to an 
en\-do\-mor\-phism of $\,(T\nh\varGamma)^\perp\nnh/\hh(T\nh\varGamma)$, which 
is obviously con\-for\-mal and has the trace claimed in (b\hs-ii), since, by 
(\ref{lvg}), $\,\nabla\nh v+[\hh\nabla\nh v]^*\nh=\phi\hskip2pt\mathrm{Id}$. 

Let $\,t\mapsto u(t)\,$ be any vector field along $\,\varGamma\nnh$. Using 
(\ref{tnv}) with $\,A\nnh^*\nh=-\hs A\,$ and (a) with $\,\phi=\phi(y)$, we 
see that 
$\,2\langle\dot x,\nabla_{\!u}v\rangle=\langle\dot x,Au+\phi\hh u\rangle
=\langle\phi\hh\dot x-A\dot x,u\rangle
=2\langle\phi\hh\dot x-\nabla_{\!\dot x}v,u\rangle
=2\langle\dot x,(\phi\hs-\nh\lambda)\hh u\rangle$, which proves (b\hs-i) for 
$\,(T\hskip-3.1pt_\varGamma^{\phantom i}\nh M)/(T\nh\varGamma)^\perp\nnh$. 
Now, if $\,\nabla_{\!\dot x}u=0\,$ and $\,\langle\dot x,u\rangle=0$, 
(\ref{vdx}.ii) implies that 
$\,2\hh\nabla_{\!\dot x}\nabla_{\!u}v=[(d\hh\phi)(u)]\hs\dot x$, as our 
assumptions give $\,v\wedge \dot x=0\,$ and 
$\,\hskip3.3pt\dot{\hskip-3.3pt\phi}=0$. Thus, $\,\nabla_{\!u}v\,$ projects 
onto a parallel section of $\,(T\nh\varGamma)^\perp\nnh/\hh(T\nh\varGamma)$, 
which proves (b\hs-ii). 

Due to $\,\nabla\nh v$-in\-var\-i\-ance of the sub\-bun\-dles 
$\,T\nh\varGamma\nnh,\,(T\nh\varGamma)^\perp$ of 
$\,T\hskip-3.1pt_\varGamma^{\phantom i}\nh M\,$ and the inclusion 
$\,T\nh\varGamma\subseteq(T\nh\varGamma)^\perp\nnh$, the characteristic 
polynomial of $\,\nabla\nh v\,$ in 
$\,T\hskip-3.1pt_\varGamma^{\phantom i}\nh M\,$ equals the product of the 
characteristic polynomials of the en\-do\-mor\-phisms induced by 
$\,\nabla\nh v\,$ in the three bundles 
$\,T\nh\varGamma\nnh$, $\,(T\nh\varGamma)^\perp\nnh/\hh(T\nh\varGamma)$ and 
$\,(T\hskip-3.1pt_\varGamma^{\phantom i}\nh M)/(T\nh\varGamma)^\perp\nnh$. 
By (b\hs-i), the first and last of these are polynomials of degree one with 
the roots $\,\lambda\,$ and $\,\phi\hs-\nh\lambda$, constant along 
$\,\varGamma\nnh$, while the second polynomial is constant along 
$\,\varGamma$ as a consequence of (b\hs-ii), which completes the proof.
\end{proof}
\begin{lemma}\label{zeros}Let there be given a con\-for\-mal vector field\/ 
$\,v\,$ on a pseu\-\hbox{do\hs-}\hskip0ptRiem\-ann\-i\-an manifold\/ 
$\,(M,g)\,$ of dimension\/ $\,n\ge3$, a point\/ $\,z\in M\nh$, a nonzero 
null vector\/ $\,w\in\tzm\nh$, and a nontrivial geodesic segment\/ 
$\,\varGamma\,$ in\/ $\,(M,g)\,$ containing\/ $\,z\,$ and\/ tangent to\/ 
$\,w\,$ at\/ $\,z$.
\begin{enumerate}
  \def\theenumi{{\rm\alph{enumi}}}
\item[{\rm(a)}] If\/ $\,v_z\nh=0\,$ and\/ 
$\,w\in\hh\mathrm{Ker}\hskip1.7pt\nabla\nh v_z
\cap\hh\mathrm{Ker}\hskip2.2ptd\hh\phi_z$, for the function\/ $\,\phi\,$ in\/ 
{\rm(\ref{lvg})}, then\/ $\,v=0\,$ and\/ $\,\phi=\phi(z)\,$ at every point 
of\/ $\,\varGamma\nnh$.
\item[{\rm(b)}] If\/ $\,v_z\nh=0\,$ and\/ $\,v_x\nh=0\,$ for some point\/ 
$\,x\in\varGamma\nh\smallsetminus\{z\}\,$ lying in a 
nor\-mal-co\-or\-di\-nate neighborhood\/ $\,\,U'$ of\/ $\,z$, or\/ 
$\,v_z$ and\/ $\,\nabla_{\!w}v\in\tzm\,$ are both tangent to\/ 
$\,\varGamma\,$ at\/ $\,z$, then\/ $\,v\,$ is tangent to\/ $\,\varGamma\,$ at 
every point of\/ $\,\varGamma\nnh$.
\end{enumerate}
\end{lemma}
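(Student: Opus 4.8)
The plan is to parametrize $\varGamma$ as $t\mapsto x(t)$ with $x(0)=z$ and $\dot x(0)=w$; since $w$ is null and $\varGamma$ is a geodesic, $\dot x$ is then parallel and null along $\varGamma$, so $\langle\dot x,\dot x\rangle\equiv0$ and $\nabla_{\dot x}\dot x\equiv0$. Both parts will follow from uniqueness of solutions to homogeneous linear ordinary differential equations along $\varGamma$, the equations being furnished by the identities of Section~\ref{cg}.

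For part (a), with $\psi=\phi-\phi(z)$, I would treat (\ref{ndx}.i) and (\ref{ndx}.ii) as a single first-order homogeneous linear system for the quadruple $(v,\nabla_{\dot x}v,\psi,\dot\psi)$ along $\varGamma$. Because $\langle\dot x,\dot x\rangle=0$, equation (\ref{ndx}.i) reads $\nabla_{\dot x}\nabla_{\dot x}v=R(v\wedge\dot x)\dot x+\dot\psi\,\dot x$, whose right-hand side is linear in $(v,\dot\psi)$, while (\ref{ndx}.ii) expresses $\ddot\psi$ linearly in $(v,\nabla_{\dot x}v)$; the latter uses $n\ge3$, which makes the coefficient $1-n/2$ nonzero. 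The three hypotheses $v_z=0$, $w\in\mathrm{Ker}\,\nabla v_z$ and $w\in\mathrm{Ker}\,d\phi_z$ say exactly that $v$, $\nabla_{\dot x}v$, $\psi$ and $\dot\psi$ vanish at $t=0$, so uniqueness forces $v\equiv0$ and $\phi\equiv\phi(z)$ along $\varGamma$, as claimed.

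For part (b) the governing equation is (\ref{nwd}), a second-order homogeneous linear ODE for the bivector $\omega=v\wedge\dot x$, its right-hand side $[R(v\wedge\dot x)\dot x]\wedge\dot x$ depending linearly on $\omega$ through the curvature operator; note also that $\nabla_{\dot x}\omega=(\nabla_{\dot x}v)\wedge\dot x$, as $\dot x$ is parallel. Under the second hypothesis $v_z$ and $\nabla_w v$ are multiples of $w=\dot x(0)$, so $\omega$ and $\nabla_{\dot x}\omega$ vanish at $t=0$, and uniqueness yields $\omega\equiv0$, i.e.\ $v$ is tangent to $\varGamma$. The first hypothesis, $v_z=v_x=0$, is the one delicate point, since two-point vanishing of $v$ does not by itself produce initial data for a second-order ODE. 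To supply it I would appeal to Remark~\ref{nlseg}: as $x$ lies in a normal-coordinate neighborhood of $z$, the sub-segment of $\varGamma$ from $z$ to $x$ is a rigid null geodesic segment with $v$ vanishing at both ends, so the local flow of $v$ preserves it, whence $v$ is tangent to $\varGamma$ all along that sub-segment. On it $\omega$ and $\nabla_{\dot x}\omega$ therefore vanish, and a final application of uniqueness for (\ref{nwd}) propagates $\omega\equiv0$ to the whole of $\varGamma$, completing the proof.
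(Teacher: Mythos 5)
Your proposal is correct and takes essentially the same approach as the paper: part (a) by uniqueness of solutions for the first-order linear homogeneous system obtained from (\ref{ndx}) along the null geodesic, and part (b) by uniqueness for (\ref{nwd}) in the tangency case, combined with the flow-invariance argument of Remark~\ref{nlseg} in the two-zero case exactly as the paper does. The only cosmetic difference is that you carry $\psi=\phi-\phi(z)$ as a fourth unknown, whereas the paper works with the triple $(v,\nabla_{\dot x}v,\dot\phi)$ and deduces constancy of $\phi$ from $\dot\phi\equiv0$.
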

\begin{proof}Let $\,t\mapsto x(t)\,$ be a geodesic pa\-ram\-e\-tri\-za\-tion 
of $\,\varGamma\,$ with $\,x(0)=z$. As $\,\langle\dot x,\dot x\rangle=0$, 
(\ref{ndx}) constitutes a system of first-or\-der linear homogeneous ordinary 
differential equations with the unknowns $\,v,\nabla_{\!\dot x}v\,$ and 
$\,\hskip2.8pt\dot{\hskip-3.3pt\phi}$, for which the assumption in (a) amount 
to choosing the zero initial conditions at $\,t=0$. The conclusion of (a) is 
now obvious from uniqueness of solutions. Similarly, if $\,v\,$ and 
$\,\nabla_{\!\dot x}v\,$ are both tangent to the geodesic at $\,t=0$, 
(\ref{nwd}) implies, for the same reason, that $\,v\wedge\dot x=0\,$ at every 
$\,t$, proving (b) in this case. Finally, if $\,v_z\nh=0=v_x$ for $\,x\,$ as 
in (b), the local flow of $\,v\,$ sends the portion of $\,\varGamma\,$ joining 
$\,z\,$ to $\,x\,$ into itself (Remark~\ref{nlseg}). Combined with the 
preceding sentence, this shows that $\,v\,$ is tangent to $\,\varGamma\nnh$.
\end{proof}
\begin{lemma}\label{zrnlg}Suppose that\/ $\,[\hs0,1\hh]\ni t\mapsto x(t)\,$ is 
a parametrization of a nontrivial null geodesic segment\/ $\,\varGamma$ with 
the endpoints\/ $\,z=x(0)\,$ and\/ $\,y=x(1)\,$ in a 
pseu\-\hbox{do\hs-}\hskip0ptRiem\-ann\-i\-an manifold\/ $\,(M,g)$, while\/ 
$\,v\,$ is a con\-for\-mal vector field on\/ $\,(M,g)\,$ vanishing at both\/ 
$\,z\,$ and\/ $\,x$. For\/ $\,\phi\,$ and\/ 
$\,\hskip2.8pt\dot{\hskip-3.3pt\phi}\,$ as in\/ {\rm(\ref{lvg})} and\/ 
{\rm(\ref{dfe})}, one has
\begin{enumerate}
  \def\theenumi{{\rm\roman{enumi}}}
\item[{\rm(i)}] if\/ $\,v\,$ is tangent to\/ $\,\varGamma\nnh$, then 
$\,\nabla_{\!\dot x}v=0\,$ somewhere in\/ 
$\,\varGamma\nh\smallsetminus\{z,y\}$,
\hskip3ptand
\item[{\rm(ii)}] if\/ $\,\nabla_{\!\dot x}v=0\,$ at\/ $\,z$, then\/ 
$\,\phi-\phi(z)\,$ and\/ $\,\hskip2.8pt\dot{\hskip-3.3pt\phi}\,$ vanish at 
some points of\/ $\,\varGamma\nh\smallsetminus\{z,y\}$.
\end{enumerate}
\end{lemma}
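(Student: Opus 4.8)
The plan is to reduce both parts to a statement about a single scalar function along $\,\varGamma\,$ and then apply Rolle's theorem. Throughout I fix the geodesic parametrization $\,t\mapsto x(t)$, $\,t\in[\hs0,1\hh]$, with $\,x(0)=z$, $\,x(1)=y$, and use $\,\nabla_{\!\dot x}\dot x=0$. Since in each part $\,v\,$ turns out to be a multiple of the nonvanishing field $\,\dot x$, I write $\,v=f\dot x\,$ for a smooth $\,f:[\hs0,1\hh]\to\bbR\,$ (well-defined and smooth because $\,\dot x\neq0$), so that $\,\nabla_{\!\dot x}v=\dot f\dot x\,$ and $\,\nabla_{\!\dot x}\nabla_{\!\dot x}v=\ddot f\dot x$.

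For part (i), $\,v\,$ is tangent to $\,\varGamma\,$ by hypothesis, and the assumptions $\,v_z=v_y=0\,$ give $\,f(0)=f(1)=0$. Rolle's theorem then provides $\,t_*\in(0,1)\,$ with $\,\dot f(t_*)=0$, whence $\,\nabla_{\!\dot x}v=\dot f\dot x=0\,$ at the point $\,x(t_*)\in\varGamma\smallsetminus\{z,y\}$.

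For part (ii) the first move is the crucial one: I would argue that the hypotheses already force $\,v\,$ to be tangent to $\,\varGamma$. Indeed $\,v_z=0\,$ and $\,\nabla_{\!\dot x}v=0\,$ at $\,z\,$ are both (trivially) tangent to $\,\varGamma\,$ at $\,z$, so the second alternative in Lemma~\ref{zeros}(b) applies and $\,v\,$ is tangent to $\,\varGamma\,$ everywhere. With $\,v=f\dot x\,$ as above, comparing $\,\nabla_{\!\dot x}\nabla_{\!\dot x}v=\ddot f\dot x\,$ with (\ref{nvf}) yields $\,\ddot f=\dot\phi$. The condition $\,\nabla_{\!\dot x}v=0\,$ at $\,z\,$ says $\,\dot f(0)=0$, so integrating $\,\ddot f=\dot\phi\,$ from $\,0\,$ gives $\,\dot f=\phi-\phi(z)\,$ along $\,\varGamma\,$ (this is exactly Lemma~\ref{nvprl}(a) with $\,\lambda=0$). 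Now two applications of Rolle finish things: from $\,f(0)=f(1)=0\,$ there is $\,t_1\in(0,1)\,$ with $\,\dot f(t_1)=0$, i.e.\ $\,\phi(t_1)=\phi(z)$, so $\,\phi-\phi(z)\,$ vanishes at $\,x(t_1)$; and from $\,\dot f(0)=\dot f(t_1)=0\,$ Rolle applied to $\,\dot f\,$ on $\,[\hs0,t_1\hh]\,$ gives $\,t_2\in(0,t_1)\,$ with $\,\ddot f(t_2)=0$, i.e.\ $\,\dot\phi(t_2)=0$. Both $\,x(t_1),x(t_2)\,$ lie in $\,\varGamma\smallsetminus\{z,y\}$.

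I expect the only genuine obstacle to be precisely that opening step of (ii): recognizing that $\,\nabla_{\!\dot x}v=0\,$ at $\,z\,$ (with $\,v_z=0$) is exactly the input needed to invoke Lemma~\ref{zeros}(b) and collapse $\,v\,$ onto the $\,\dot x$-direction. Without this reduction one is left with the curvature term $\,R(v\wedge\dot x)\dot x\,$ in (\ref{ndx}.i), which blocks any attempt to recover $\,\phi\,$ by pairing $\,\nabla_{\!\dot x}\nabla_{\!\dot x}v\,$ with a transversal vector; once tangency is established that term drops out and everything is elementary.
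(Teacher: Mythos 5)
Your proposal is correct and takes essentially the same route as the paper: tangency in case (ii) via the second alternative of Lemma~\ref{zeros}(b), reduction of $\,v\,$ to a scalar function $\,f\,$ along $\,\varGamma\nnh$, and then Rolle's theorem combined with Lemma~\ref{nvprl}(a) (with $\,\lambda=0$) and (\ref{nvf}) to produce the interior zeros of $\,\nabla_{\!\dot x}v$, $\,\phi-\phi(z)\,$ and $\,\hskip2.8pt\dot{\hskip-3.3pt\phi}$. The paper's proof is just a compressed version of exactly this argument.
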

\begin{proof}By Lemma~\ref{zeros}(b), $\,v\,$ is tangent to $\,\varGamma\hs$ 
in case (ii) as well. Since $\,v_{x(t)}$ is a multiple of $\,\dot x(t)$, it 
may be viewed as a function $\,[\hs0,1\hh]\to\bbR$, equal to $\,0\,$ at the 
endpoints. Its derivative $\,\nabla_{\!\dot x}v\,$ therefore vanishes at some 
$\,t\in(0,1)$. Under the assumption of (ii), $\,\nabla_{\!\dot x}v=0\,$ both 
at $\,0\,$ and somewhere in $\,(0,1)$, so that (\ref{nvf}) (for the second 
derivative $\,\nabla_{\!\dot x}\nabla_{\!\dot x}v$) and the equality 
$\,\nabla_{\!\dot x}v=[\hh\phi-\phi(z)]\hs\dot x$, due to 
Lemma~\ref{nvprl}(a), imply (ii).
\end{proof}

\section{The first of the two inclusions}\label{fi}
Assertion (a) in the following lemma implies a part of the conclusion in 
Theorem~\ref{nbzrs}. See Remark~\ref{incls} for details.
\begin{lemma}\label{cstdm}For a con\-for\-mal vector field\/ $\,v\,$ on a 
pseu\-\hbox{do\hs-}\hskip0ptRiem\-ann\-i\-an manifold\/ 
$\,(M,g)\,$ of dimension\/ $\,n\ge3$, a point\/ $\,z\in M\,$ with\/ 
$\,v_z\nh=0$, and\/ $\,\phi\,$ as in\/ {\rm(\ref{lvg})}, let us set\/ 
$\,V\nnh=H\cap H^\perp$ and $\,E=C\cap H$, where\/ 
$\,H=\hs\mathrm{Ker}\hskip1.7pt\nabla\nh v_z
\cap\hs\mathrm{Ker}\hskip2.2ptd\hh\phi_z$ and\/ $\,C=\{u\in\tzm:g_z(u,u)=0\}\,$ 
denotes the null cone, so that $\,V$ is a null vector subspace of\/ $\,\tzm\,$ 
contained in the subset\/ $\,E$. Whenever\/ $\,\,U\hs$ is a sufficiently small 
star-shap\-ed neighborhood of\/ $\,0\,$ in\/ $\,\tzm\,$ mapped by\/ 
$\,\mathrm{exp}\hh_z$ dif\-feo\-mor\-phi\-cal\-ly onto a neighborhood of\/ 
$\,z\,$ in\/ $\,M\nh$, the image\/ 
$\,K\hs\,=\,\hs\mathrm{exp}\hh_z[\hh V\nnh\cap U\hh]\,$ is a sub\-man\-i\-fold 
of\/ $\,M\nh$, while
\begin{enumerate}
  \def\theenumi{{\rm\alph{enumi}}}
\item[{\rm(a)}] $v=0\,$ and\/ $\,\phi=\phi(z)\,$ everywhere in\/ 
$\,\mathrm{exp}\hh_z[E\cap U\hh]$, and hence everywhere in\/ $\,K$,
\item[{\rm(b)}] for any\/ $\,x\in K$, the parallel transport along the 
geodesic contained in\/ $\,K\,$ which joins\/ $\,z\,$ to\/ $\,x\,$ sends\/ 
$\,H\,$ onto\/ $\,\mathcal{H}_x=\hs\mathrm{Ker}\hskip1.7pt\nabla\nh v_x
\cap\mathrm{Ker}\hskip2.2ptd\hh\phi_x$,
\item[{\rm(c)}] $\dim\hs\mathcal{H}_x$ is constant as a function of\/ 
$\,x\in K$,
\item[{\rm(d)}] if\/ $\,\phi(z)=0$, then\/ 
$\,\mathrm{rank}\hskip2.9pt\nabla\nh v_x$ and\/ 
$\,\dim\hs\mathrm{Ker}\hskip1.7pt\nabla\nh v_x$ are constant as functions of\/ 
$\,x\in K$.
\end{enumerate}
\end{lemma}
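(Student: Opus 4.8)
The plan is to reduce every assertion to an analysis along the null geodesics issuing from $\,z\,$ in the directions of $\,V\nnh$, using that $\,V\,$ is null and that $\,v,\phi\,$ are already pinned down on $\,E\,$ by Lemma~\ref{zeros}(a). For the preliminaries: if $\,u\in V\nh=H\cap H^\perp\nnh$, then $\,u\in H\,$ and $\,u\perp H\,$ force $\,g_z(u,u)=0$, so $\,V\subseteq C\cap H=E$, and $\,V\,$ is a null subspace. Since $\,V\,$ is a vector subspace and $\,\mathrm{exp}\hh_z$ is a diffeomorphism on the star-shaped $\,U$, the image $\,K=\mathrm{exp}\hh_z[V\nnh\cap U\hh]\,$ is a submanifold (the $\,\mathrm{exp}\hh_z$-image of the linear slice $\,V\nnh\cap U$), and for $\,x=\mathrm{exp}\hh_z(w)\in K\,$ with $\,w\in V\nnh\cap U\,$ the curve $\,t\mapsto\mathrm{exp}\hh_z(tw)\,$ is the null geodesic in $\,K\,$ joining $\,z\,$ to $\,x$. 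Assertion (a) is then immediate: each $\,w\in E\cap U\,$ is a null vector in $\,H=\mathrm{Ker}\,\nabla\nh v_z\cap\mathrm{Ker}\,d\hh\phi_z$, so Lemma~\ref{zeros}(a) applied to $\,\varGamma_w\colon t\mapsto\mathrm{exp}\hh_z(tw)\,$ gives $\,v=0\,$ and $\,\phi=\phi(z)\,$ along $\,\varGamma_w$, hence on all of $\,\mathrm{exp}\hh_z[E\cap U\hh]\supseteq K$.

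Now fix $\,x=\mathrm{exp}\hh_z(w)\in K$, let $\,t\mapsto x(t)\,$ parametrize $\,\varGamma_w$, and let $\,P_t\,$ be $\,g$-parallel transport from $\,z\,$ to $\,x(t)$. By (a), along $\,\varGamma_w\,$ we have $\,v\equiv0$, $\,\phi\equiv\phi(z)$, $\,\dot\phi\equiv0$, and $\,\nabla_{\!\dot x}v\equiv0\,$ (immediate from $\,v\circ x\equiv0$), so $\,\dot x(t)\in\mathrm{Ker}\,\nabla\nh v_{x(t)}\cap\mathrm{Ker}\,d\hh\phi_{x(t)}$. For the forward inclusion in (b), take $\,u_0\in H$, put $\,u(t)=P_tu_0$, $\,a=\nabla_{\!u}v$, $\,b=d\hh\phi(u)$. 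Since $\,w\in H^\perp\nh$ and $\,u_0\in H$, parallel transport gives $\,\langle\dot x,u\rangle=\langle w,u_0\rangle=0$; feeding $\,v=0$, $\,\dot\phi=0$, $\,\nabla_{\!\dot x}v=0$ and $\,\langle\dot x,u\rangle=0\,$ into (\ref{vdx}.ii)--(\ref{vdx}.iii) kills the curvature and $\,\nabla\nh\phi\,$ terms and leaves
\[
2\hh\nabla_{\!\dot x}a=b\,\dot x,\qquad (1-n/2)\,\dot b=S(\dot x,a),
\]
with $\,a(0)=0$, $\,b(0)=0$. The first equation keeps $\,a\,$ a multiple of the parallel field $\,\dot x$, say $\,a=c\,\dot x$, and $\,(c,b)\,$ then solves a homogeneous linear system with zero initial data; as $\,n\ge3\,$ gives $\,1-n/2\ne0$, uniqueness yields $\,a\equiv b\equiv0$. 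Hence $\,u(t)\in\mathcal H_{x(t)}$, i.e.\ $\,P_t(H)\subseteq\mathcal H_{x(t)}$.

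The equality $\,P_1(H)=\mathcal H_x$, equivalently $\,\dim\mathcal H_x=\dim H$, is the heart of the matter and the step I expect to be hardest. Running the displayed system \emph{backward} from $\,x\,$ with data $\,\tilde u_0\in\mathcal H_x\,$ (so $\,\tilde a(1)=\tilde b(1)=0$) reproduces $\,\tilde u(0)\in H\,$ provided $\,\langle\dot x(1),\tilde u_0\rangle=0$, i.e.\ provided $\,\mathcal H_x\perp\dot x(1)$. When $\,\phi(z)\ne0\,$ this is free: by Lemma~\ref{krnll}(a) the subspace $\,\mathrm{Ker}\,\nabla\nh v_x\,$ is totally isotropic, and both $\,\dot x(1)\,$ and $\,\tilde u_0\,$ lie in it, so their inner product vanishes; backward uniqueness then gives $\,\mathcal H_x\subseteq P_1(H)$, proving (b). Assertion (c) follows at once, since $\,P_1\,$ is a linear isomorphism, whence $\,\dim\mathcal H_x=\dim H\,$ for every $\,x\in K$. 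When $\,\phi(z)=0\,$ the operator $\,\nabla\nh v_x=A_x/2\,$ is $\,g$-skew but its $\,0$-eigenspace need not be isotropic, so $\,\mathcal H_x\perp\dot x(1)\,$ must be extracted from the parallel structure of Lemma~\ref{nvprl}(b): with $\,\phi\,$ constant, $\,\nabla\nh v\,$ preserves the flag $\,T\varGamma\subseteq(T\varGamma)^\perp\nnh\subseteq T_\varGamma M$, acts as $\,0\,$ on the two outer lines, and descends to a parallel conformal endomorphism $\,\bar B\,$ of $\,(T\varGamma)^\perp\nnh/(T\varGamma)$, while its off-diagonal blocks evolve by $\,2\hh\nabla_{\!\dot x}(\nabla\nh v)=d\hh\phi\otimes\dot x-\dot x^\flat\otimes\nabla\nh\phi\,$ (from (\ref{vdx}.ii) with $\,v=0$, $\,\dot\phi=0$). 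I would propagate the relation $\,\dot x\in\nabla\nh v(T M)+\bbR\,\nabla\nh\phi=\mathcal H^\perp\nnh$, which holds at $\,z\,$ because $\,w\in H^\perp\nh$ (Lemma~\ref{krnll}(b)), along $\,\varGamma_w\,$ using this block evolution; establishing that $\,\mathcal H_{x(t)}\subseteq(T\varGamma)^\perp\nh$ is preserved is the genuinely technical part.

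Finally (d). With $\,\phi(z)=0$, hence $\,\phi\equiv0\,$ on $\,K$, every $\,\nabla\nh v_x\,$ is $\,g$-skew, so $\,\mathrm{rank}\,\nabla\nh v_x=n-\dim\mathrm{Ker}\,\nabla\nh v_x\,$ and it suffices to show $\,\dim\mathrm{Ker}\,\nabla\nh v_x\,$ is constant. Lemma~\ref{nvprl}(b-iii) already makes the characteristic polynomial, hence the algebraic multiplicity of the eigenvalue $\,0$, constant along each $\,\varGamma_w$, and parallelism of $\,\bar B\,$ makes the whole Jordan type of $\,\bar B_x\,$ constant; combining this with the $\,t$-independent pattern by which the two outer lines of the flag attach to $\,(T\varGamma)^\perp\nnh/(T\varGamma)$, read off from the same block evolution, yields constancy of the geometric multiplicity of $\,0$. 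Equivalently, once (b) is in hand one has $\,\dim\mathrm{Ker}\,\nabla\nh v_x=\dim\mathcal H_x+\epsilon_x\,$ with $\,\epsilon_x\in\{0,1\}\,$ equal to $\,1\,$ exactly when $\,\nabla\nh\phi_x\notin\nabla\nh v_x(T_xM)\,$ (Lemma~\ref{krnll}(b)), and the same propagation shows $\,\epsilon_x\,$ is constant on $\,K$.
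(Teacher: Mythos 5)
Your preliminaries, assertion (a), and the ``into'' half of (b) -- parallel transport maps $H$ into $\mathcal{H}_{x(t)}$, via the homogeneous linear ODE system in $(\nabla_{\!u}v,\,d\phi(u))$ extracted from (\ref{vdx}.ii)--(\ref{vdx}.iii) after $v$, $\nabla_{\!\dot x}v$, $\dot\phi$ and $\langle\dot x,u\rangle$ are killed -- coincide with the paper's proof. The genuine gap is the ``onto'' half of (b) when $\phi(z)=0$, and you flag it yourself. Your backward integration from $x$ needs $\langle\dot x(1),\tilde u_0\rangle=0$ for every $\tilde u_0\in\mathcal{H}_x$ before the ODE system is even valid along the geodesic; you obtain this from Lemma~\ref{krnll}(a) only when $\phi(z)\ne0$, and for $\phi(z)=0$ you substitute a sketch (propagating $\mathcal{H}_{x(t)}\subseteq(T\varGamma)^\perp$ through a ``block evolution'' of $\nabla v$) that is never carried out -- and is essentially circular, since that orthogonality statement is equivalent to the conclusion $\mathcal{H}_x=P_1(H)$ being proved, as $P_1$ is a $g$-isometry and $\dot x(1)=P_1(w)$ with $w\perp H$. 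The same incompleteness infects (d): Lemma~\ref{nvprl}(b-iii) controls only the characteristic polynomial, hence the algebraic multiplicity of the eigenvalue $0$, not the geometric multiplicity you need, and your fallback claim that ``the same propagation shows $\epsilon_x$ is constant'' is again only asserted. Note that the case you leave open, $\phi(z)=0$, is exactly the case (\ref{cnd}.b) needed for Theorem~\ref{nbzrs}.

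Both gaps are closed in the paper by two soft observations. First, $\mathcal{H}_x$ is the kernel of a linear operator depending continuously on $x\in K$ (namely $u\mapsto(\nabla_{\!u}v,\,d\phi_x(u))$), so its dimension is upper semicontinuous: $\dim\mathcal{H}_x\le\dim\mathcal{H}_z$ for $x$ near $z$. Combined with your own ``into'' inclusion, which gives $\dim\mathcal{H}_x\ge\dim H=\dim\mathcal{H}_z$, this forces equality, hence ``onto'', in all cases at once -- no backward integration and no case split on $\phi(z)$; (c) follows immediately. Second, for (d): since $\mathcal{H}_x$ has codimension $0$ or $1$ in $\mathrm{Ker}\,\nabla v_x$, assertion (c) gives $p_x:=\dim\mathrm{Ker}\,\nabla v_x\in\{\dim H,\ \dim H+1\}$, while $\phi\equiv0$ on $K$ (by (a)) and Lemma~\ref{krnll}(b) force $n-p_x$ to be even; only one of the two admissible values has the correct parity, so $p_x$, and with it $\mathrm{rank}\,\nabla v_x=n-p_x$, is constant. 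Your backward-ODE argument for $\phi(z)\ne0$ is a correct alternative route in that subcase, but as written the proposal does not prove (b), (c), (d) in general.
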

\begin{proof}Lemma~\ref{zeros}(a) implies (a). Next, if 
$\,t\mapsto u(t)\in T_{x(t)}M\,$ is a parallel vector field along a geodesic 
$\,t\mapsto x(t)\,$ in $\,K\,$ with $\,x(0)=z\,$ and $\,u(0)\in H$, the 
equality $\,V\nnh=H\cap H^\perp$ gives $\,\langle\dot x,u\rangle=0\,$ at 
$\,t=0\,$ and, consequently, for all $\,t$. 
(As usual, $\,\langle\,,\rangle\,$ stands for $\,g$.) By (a), 
$\,v=\nabla_{\!\dot x}v=0\,$ and $\,\hskip2.8pt\dot{\hskip-3.3pt\phi}=0\,$ 
along the whole geodesic, so that equations (\ref{vdx}.ii) -- (\ref{vdx}.iii) 
now read $\,2\hh\nabla_{\!\dot x}\nabla_{\!u}v
=[(d\hh\phi)(u)]\hs\dot x\,$ and 
$\,(1-n/2)[(d\hh\phi)(u)]\hs\dot{\,}=S(\dot x,\nabla_{\!u}v)$. This is a 
system of first-or\-der linear homogeneous ordinary differential equations 
with the 
unknowns $\,\nabla_{\!u}v\,$ and $\,(d\hh\phi)(u)$, which equal zero at 
$\,t=0\,$ and, therefore, at every $\,t$. The parallel transport along the 
geodesic from $\,z\,$ to $\,x=x(t)\,$ thus sends $\,\mathcal{H}_z$ {\it 
into\/} $\,\mathcal{H}_x$. The word `into' can be replaced with `onto' if 
$\,\,U\hs$ is small enough. Namely, $\,\mathcal{H}_x$ is the kernel of a 
linear operator depending continuously on $\,x\in K$, and so 
$\,\dim\hs\mathcal{H}_x$ is sem\-i\-con\-tin\-u\-ous: 
$\,\dim\hs\mathcal{H}_x\nh\le\dim\hs\mathcal{H}_z$ for $\,x\,$ near $\,z\,$ in 
$\,K$. However, the ``into'' conclusion established above gives 
$\,\dim\hs\mathcal{H}_x\nh\ge\dim\hs\mathcal{H}_z$. Now (b) and (c) follow. 
Finally, $\,p_z\nh-1\le p_x\le p_z$ for 
$\,p_x\nh=\dim\hs\mathrm{Ker}\hskip1.7pt\nabla\nh v_x$ and all $\,x\in K\,$ 
close to $\,z$. In fact, $\,p_x\le p_z$ due, again, to 
sem\-i\-con\-ti\-nu\-i\-ty, and 
$\,p_z\nh-1\le\dim\hs\mathcal{H}_z\nh=\dim\hs\mathcal{H}_x\nh\le p_x$ by (c). 
On the other hand, if $\,\phi=0\,$ at $\,z$, (a) gives $\,\phi=0\,$ on $\,K$. 
Thus, $\,n-p_x$ is even (Lemma~\ref{krnll}(b)), which, combined with the 
inequality $\,p_z\nh-1\le p_x\le p_z$, yields (d).
\end{proof}
\begin{remark}\label{incls}Proving Theorem~\ref{nbzrs} has now been reduced to 
showing that 
\begin{equation}\label{ncl}
Z\cap U'\hs\subseteq\,\hh\mathrm{exp}\hh_z[\hh C\cap H\cap U\hh]\hskip7pt
\mathrm{for\ sufficiently\ small\ }\hs\,U\hs\mathrm{\ and\ }\hs\,U'\nh,
\end{equation}
since the opposite inclusion is provided by Lemma~\ref{cstdm}(a).
\end{remark}

\section{Connecting limits for the zero set}\label{cz}
This section consists of two lemmas needed in the proof of 
Theorem~\ref{nbzrs}. For the definition of 
$\,\bbL_z(Z\smallsetminus\phi^{-1}(0),K)$, see Section~\ref{cl}.
\begin{lemma}\label{tadir}For\/ $\,M,g,v,Z,z,\phi\,$ and\/ $\,H\,$ as in 
Theorem\/~{\rm\ref{nbzrs}}, let\/ $\,V=H\cap H^\perp\nnh$. If\/ 
$\,\phi(z)=0\,$ and\/ $\,\nabla\nh\phi_z\notin\nabla\nh v_z(\tzm)$, while no 
element of\/ $\,\bbL_z(Z\smallsetminus\phi^{-1}(0),K)\,$ is contained in\/ 
$\,(H\smallsetminus V\hh)\cup\{0\}$, then\/ $\,\phi=0\,$ at every zero of\/ 
$\,v\,$ sufficiently close to\/ $\,z$.
\end{lemma}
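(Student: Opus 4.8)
The plan is to argue by contradiction, producing from a hypothetical sequence of zeros with $\phi\neq0$ a connecting limit that lands precisely in the forbidden set $(H\smallsetminus V)\cup\{0\}$, contradicting the hypothesis. So suppose $v$ has zeros $x_j\to z$ with $\phi(x_j)\neq0$ for all $j$, yet arbitrarily close to $z$. I would first invoke Lemma~\ref{intsb} to obtain the intermediate submanifold $N$ containing all nearby zeros of $v$ and with $\tzn=\mathrm{Ker}\,\nabla v_z$; since $\phi(z)=0$ and $\nabla\phi_z\notin\nabla v_z(\tzm)$, the restriction $\phi|_N$ has $d(\phi|_N)_z=d\phi_z|_{\tzn}\neq0$ (the gradient being orthogonal to $\nabla v_z(\tzm)=(\mathrm{Ker}\,\nabla v_z)^\perp$ would force $\nabla\phi_z\in\nabla v_z(\tzm)$, contrary to (\ref{cnd}.b)). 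Hence $P=\phi^{-1}(0)\cap N$ is a codimension-one submanifold of $N$ near $z$, with tangent space $H=\tzn\cap\mathrm{Ker}\,d\phi_z=\mathrm{Ker}\,\nabla v_z\cap\mathrm{Ker}\,d\phi_z$, matching the $H$ of Theorem~\ref{nbzrs}.

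\textbf{Key steps.} The zeros $x_j$ all lie in $N$, and since $\phi(x_j)\neq0$ they lie in $N\smallsetminus P=N\smallsetminus\phi^{-1}(0)$. I would pass to a subsequence so that the directions $\mathbb{R}(\exp_z^{-1}x_j)$, suitably normalized, converge to a line $L$; by the definition at the end of Section~\ref{cl}, $L$ is a radial limit direction, i.e. $L\in\mathbb{L}_z(\{z\},Z\smallsetminus\phi^{-1}(0))$. By Lemma~\ref{intsb}(iii), every radial limit direction of the zero set lies in $\tzn=\mathrm{Ker}\,\nabla v_z$, so $L\subseteq\mathrm{Ker}\,\nabla v_z$. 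The main point is then to upgrade this to $L\subseteq H$, and this is where $K$ enters: taking $K=\exp_z[V\cap U]$ as in Lemma~\ref{cstdm} (so $V=\tzk$), I would show that the relevant connecting limit actually arises from a pair with one sequence in $K$, landing $L$ in $\mathbb{L}_z(Z\smallsetminus\phi^{-1}(0),K)$. Since $z\in K$, comparing $x_j$ against the fixed point $z\in K$ already realizes $L$ as such a limit.

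\textbf{The crux} is ruling out $L\subseteq V$, so that $L$ genuinely lands in $(H\smallsetminus V)\cup\{0\}$ and triggers the contradiction. Here I would use that along the null geodesic from $z$ to $x_j$, Remark~\ref{phiez} and the analysis of Section~\ref{cg} constrain $\phi$: if the limiting direction lay in $V=H\cap H^\perp$ (the nullspace of $g|_H$), the geodesic would be forced into $K$, where Lemma~\ref{cstdm}(a) gives $\phi\equiv\phi(z)=0$, contradicting $\phi(x_j)\neq0$ in the limit. Thus $L\not\subseteq V$, while $L\subseteq\mathrm{Ker}\,\nabla v_z$ together with $\phi(x_j)\to0$ (by continuity, since $x_j\to z$ and $\phi(z)=0$) forces $L\subseteq\mathrm{Ker}\,d\phi_z$, hence $L\subseteq H$. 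Therefore $L\subseteq H$ but $L\not\subseteq V$, so $L\in(H\smallsetminus V)\cup\{0\}$, contradicting the hypothesis that no element of $\mathbb{L}_z(Z\smallsetminus\phi^{-1}(0),K)$ lies there.

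\textbf{The main obstacle} I anticipate is the bookkeeping in the previous paragraph: establishing rigorously that $L\subseteq\mathrm{Ker}\,d\phi_z$ requires more than continuity of $\phi$, since $\phi(x_j)\to0$ only gives vanishing of $\phi$ on $L$ to first order along the limiting direction. The clean way is to apply Remark~\ref{submf}(ii) with the section whose zero set cuts out the locus where the first-order behavior of $\phi$ degenerates, or to observe directly that the zeros $x_j$ of $v$ satisfy $\nabla v_{x_j}=0$ on $\mathrm{Ker}\,\nabla v_{x_j}$ and track $d\phi$ along the connecting geodesics using (\ref{vdx}.iii) and Lemma~\ref{cstdm}(b), which transports $H$ parallelly and forces the limiting tangent into $\mathrm{Ker}\,d\phi_z$. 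Once $L\subseteq H$ is secured, the contradiction with the hypothesis is immediate, completing the proof.
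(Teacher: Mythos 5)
Your setup is sound as far as it goes: nearby zeros with $\phi\neq0$ do lie in the intermediate submanifold $N$ of Lemma~\ref{intsb}, the restriction of $\phi$ to $N$ has nonzero differential at $z$ (by Lemma~\ref{krnll}(b) and (\ref{cnd}.b)), and a radial limit direction $L$ of such zeros is indeed an element of $\bbL_z(Z\smallsetminus\phi^{-1}(0),K)$ (pair the zeros with the constant sequence $z\in K$), contained in $\mathrm{Ker}\hskip1.7pt\nabla\nh v_z$ by Remark~\ref{submf}(ii). But both steps at your ``crux'' are genuine gaps. First, nothing you offer yields $L\subseteq\mathrm{Ker}\hskip2.2ptd\hh\phi_z$: continuity only gives $\phi(x_j)\to0$, which says nothing about the direction of approach (take $\phi$ to be a linear coordinate and $x_j=(1/j,0,\dots,0)$: then $\phi(x_j)\to0$ while $L\not\subseteq\mathrm{Ker}\hskip2.2ptd\hh\phi_z$); Remark~\ref{submf}(ii) is unavailable for $\psi=\phi$ precisely because $\phi(x_j)\neq0$; and your fallback suggestions (a section ``cutting out the locus where the first-order behavior of $\phi$ degenerates,'' or tracking $d\hh\phi$ via (\ref{vdx}.iii) and Lemma~\ref{cstdm}(b)) are not arguments. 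Second, your exclusion of $L\subseteq V$ is false as stated: a sequence of points lying outside $K$ can perfectly well have its radial limit direction tangent to $K$ (points $(1/j,1/j^2)$ against the $x$-axis), so nothing ``forces the geodesic into $K$.'' Note also that your strategy would prove something stronger than the paper does, namely that radial limit directions of $Z\smallsetminus\phi^{-1}(0)$ themselves land in $(H\smallsetminus V\hh)\cup\{0\}$; the paper never establishes this.

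The paper's proof avoids radial directions entirely, and that is exactly how your two gaps get filled. It fixes an auxiliary Riemannian metric $h$ and joins each nearby zero $y$ with $\phi(y)\neq0$ to a foot point $p_y\in K$ by a $g$-geodesic segment $\varGamma\hskip-3pt_y$ which is $h$-normal to $K$ at $p_y$; the connecting limits considered are those of the pairs $(y_j,p_{y_j})$. The $h$-normality is what rules out $L\subseteq V$ (a limit of $h$-normal directions is $h$-normal to $\tzk=V$, hence meets $V$ only at $0$), and a Rolle-type argument for $\phi$ along $\varGamma\hskip-3pt_y$ --- using $\phi=0$ at $p_y$ together with a hypothetical interior zero of $\phi$, plus uniform convergence of tangent directions of rigid geodesics (Lemma~\ref{sbcnv}) --- is what puts $L$ in $\mathrm{Ker}\hskip2.2ptd\hh\phi_z$. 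Moreover, this contradiction only proves an intermediate claim, that $\phi\neq0$ on the interior of every $\varGamma\hskip-3pt_y$; the lemma's conclusion then requires a second, independent argument: Remark~\ref{phiez} forces each $\varGamma\hskip-3pt_y$ to be null, Lemma~\ref{zeros}(b) makes $v$ tangent to it, Lemma~\ref{nvprl}(a),(b-iii) and finiteness of the spectrum of $\nabla\nh v_z$ force the relevant eigenvalue $\lambda_y$ to vanish, and Lemma~\ref{zrnlg}(i) then produces an interior zero of $\phi$, contradicting the intermediate claim. Your proposal contains neither the normal-geodesic device nor this second half, and it cannot be repaired along the lines you indicate.
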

\begin{proof}We fix a nor\-mal-co\-or\-di\-nate neighborhood $\,\,U'$ of 
$\,z\,$ which is {\it sub\-con\-vex\/} (Section~\ref{nc}), a star-shap\-ed 
neighborhood $\,\,U\,$ of $\,0\,$ in $\,\tzm\,$ mapped by 
$\,\mathrm{exp}\hh_z$ dif\-feo\-mor\-phi\-cal\-ly onto $\,\,U'\nnh$, and a 
Riemannian metric $\,h\,$ on $\,\,U'\nnh$. By Lemma~\ref{cstdm}(a), 
$\,K=\hh\mathrm{exp}\hh_z[\hh V\nnh\cap U\hh]\,$ is a sub\-man\-i\-fold of 
$\,\,U'$ contained in both $\,\phi^{-1}(0)\,$ and the zero set $\,Z\,$ of 
$\,v$. In view of the inverse mapping theorem, applied to the 
$\,g$-ex\-po\-nen\-tial mapping of the $\,h$-nor\-mal bundle of $\,K\,$ in 
$\,\,U'\nnh$, by making $\,\,U'$ smaller we can also ensure that every 
$\,y\in U'\hh\smallsetminus K\,$ is joined to some point $\,p_y\in K\,$ by a 
nontrivial $\,g$-ge\-o\-des\-ic segment $\,\varGamma\hskip-3pt_y$ contained in 
a nor\-mal-co\-or\-di\-nate neighborhood of $\,p_y$ and $\,h$-nor\-mal to 
$\,K\,$ at $\,p_y$.

Furthermore, $\,\phi\ne0\,$ everywhere in 
$\,\varGamma\hskip-3pt_y\smallsetminus\{y,p_y\}$, for all 
$\,y\in(Z\cap U'\hh)\smallsetminus\phi^{-1}(0)$, as long as $\,\,U'$ is 
sufficiently small. Namely, if this were not the case, there would exist a 
sequence of points $\,y\in(Z\cap U'\hh)\smallsetminus\phi^{-1}(0)\,$ 
converging to $\,z\,$ with $\,\phi=0\,$ at some interior point of each 
$\,\varGamma\hskip-3pt_y$. As $\,\phi=0\,$ at the endpoint $\,p_y$ due to the 
inclusion $\,K\subseteq\phi^{-1}(0)$, the 
tangent direction of each $\,\varGamma\hskip-3pt_y$ at some other interior 
point would thus be contained in $\,\mathrm{Ker}\hskip2.2ptd\hh\phi$. In 
view of Lemma~\ref{sbcnv} and Remark~\ref{submf}(ii), a subsequence of such a 
sequence of interior tangent directions would converge to an element $\,L\,$ 
of $\,\bbL_z(Z\smallsetminus\phi^{-1}(0),K)\,$ contained in both 
$\,\mathrm{Ker}\hskip1.7pt\nabla\nh v_z$ and 
$\,\mathrm{Ker}\hskip2.2ptd\hh\phi_z$, so that 
$\,L\subseteq H=\hs\mathrm{Ker}\hskip1.7pt\nabla\nh v_z
\cap\hs\mathrm{Ker}\hskip2.2ptd\hh\phi_z$. By 
Lemma~\ref{sbcnv}, the tangent direction of $\,\varGamma\hskip-3pt_y$ at 
the endpoint $\,p_y$ would tend to $\,L\,$ as well, so that $\,L\,$ would be 
$\,h$-nor\-mal to $\,K\,$ at $\,z$, and hence not contained in $\,V\nnh=\tzk$, 
contrary to our assumption.

We now show that 
$\,(Z\cap U'\hh)\smallsetminus\phi^{-1}(0)=$\hskip3.5pt{\rm\O} \hskip2.6ptfor 
small enough $\,\,U'\nnh$, as required. In fact, otherwise we might fix a 
sequence of distinct points $\,y\in(Z\cap U'\hh)\smallsetminus\phi^{-1}(0)\,$ 
converging to $\,z$. In view of the last paragraph and Remark~\ref{phiez}, 
each of the geodesic segments $\,\varGamma\hskip-3pt_y$ is null. Since the 
null geodesic segment $\,\varGamma\hskip-3pt_y$ lies in a 
nor\-mal-co\-or\-di\-nate neighborhood of $\,p_y$, while $\,v=0\,$ at both 
$\,p_y$ and $\,y$, Lemma~\ref{zeros}(b) implies that $\,v$, and hence 
$\,\nabla_{\!\dot x}v$, is tangent to $\,\varGamma\hskip-3pt_y$. 
In terms of a geodesic pa\-ram\-e\-tri\-za\-tion 
$\,[\hs0,1\hh]\ni t\mapsto x(t)\,$ of $\,\varGamma\hskip-3pt_y$ such that 
$\,x(0)=y$, we thus have, by Lemma~\ref{nvprl}(a), 
$\,\nabla_{\!\dot x}v=[\lambda+\phi-\phi(y)]\hh\dot x\,$ along 
$\,\varGamma\hskip-3pt_y$, with some $\,\lambda\in\bbR$. As 
$\,p_y\in K\subseteq\phi^{-1}(0)$, we have $\,\phi(x(1))=\phi(p_y)=0$, and so 
$\,\dot x(1)\,$ is an eigen\-vector of $\,\nabla\nh v_{x(1)}$ for the 
eigen\-value $\,\lambda_y\nh=\lambda-\phi(y)$. Since $\,y\to z$, a subsequence 
of the tangent directions of $\,\varGamma\hskip-3pt_y$ at $\,p_y$ tends to a 
limit, which must lie in 
$\,\mathrm{Ker}\hskip1.7pt\nabla\nh v_z\nh\subseteq\tzm\,$ (see 
Remark~\ref{submf}(ii)), so that, for the subsequence, $\,\lambda_y\nh\to0\,$ 
as $\,y\to z$. At the same time, according to Lemma~\ref{nvprl}(b\hs-iii) 
applied to the geodesic segment $\,\varGamma\subseteq K\,$ joining $\,z\,$ to 
$\,p_y$, each $\,\lambda_y$ is an eigen\-value of $\,\nabla\nh v_z$, and 
finiteness of the spectrum of $\,\nabla\nh v_z$ gives $\,\lambda_y\nh=0\,$ for 
all but finitely many $\,y\,$ in the subsequence. For such $\,y$, the equality 
$\,\nabla_{\!\dot x}v=[\lambda+\phi-\phi(y)]\hh\dot x\,$ reads 
$\,\nabla_{\!\dot x}v=\phi\hh\dot x$. Lemma~\ref{zrnlg}(i) now implies that 
$\,\phi=0\,$ at some interior point of each $\,\varGamma\hskip-3pt_y$ in 
question, contrary to the last paragraph.
\end{proof}
\begin{lemma}\label{nlsbs}Let\/ $\,M,g,v,Z,z,\phi,H\,$ be as in 
Theorem\/~{\rm\ref{nbzrs}}. If\/ $\,H\subseteq\tzm\,$ is a null subspace,  
$\,\phi(z)=0$, and\/ $\,\nabla\nh\phi_z\notin\nabla\nh v_z(\tzm)$, then\/ 
$\,Z\cap U'\nh=\hh\mathrm{exp}\hh_z[H\cap U\hh]\,$ for any sufficiently small 
star-shap\-ed neighborhood\/ $\,\,U\hs$ of\/ $\,0\,$ in\/ $\,\tzm\,$ mapped 
by\/ $\,\mathrm{exp}\hh_z$ dif\-feo\-mor\-phi\-cal\-ly onto a neighborhood\/ 
$\,\,U'$ of\/ $\,z\,$ in\/ $\,M\nh$.
\end{lemma}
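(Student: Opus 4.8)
The plan is to deduce from Lemma~\ref{tadir} — whose connecting-limit hypothesis turns out to be vacuous here — that $\,\phi\,$ vanishes on all nearby zeros of $\,v$, and then to identify the zero set with the smooth submanifold $\,\mathrm{exp}\hh_z[H\cap U\hh]\,$ by a dimension count.

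First, since $\,H\,$ is null we have $\,H\subseteq H^\perp\nnh$, so $\,V=H\cap H^\perp\nh=H\,$ and hence $\,(H\smallsetminus V\hh)\cup\{0\}=\{0\}$. A connecting limit is, by definition, a line through $\,0$, and therefore never contained in $\,\{0\}$; thus no element of $\,\bbL_z(Z\smallsetminus\phi^{-1}(0),K)\,$ lies in $\,(H\smallsetminus V\hh)\cup\{0\}$, and the corresponding hypothesis of Lemma~\ref{tadir} holds trivially. As $\,\phi(z)=0\,$ and $\,\nabla\nh\phi_z\notin\nabla\nh v_z(\tzm)\,$ by assumption, Lemma~\ref{tadir} then yields $\,\phi=0\,$ at every zero of $\,v\,$ sufficiently close to $\,z$.

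Next I would bring in the submanifold $\,N\,$ of Lemma~\ref{intsb}, which contains all nearby zeros of $\,v\,$ and has $\,\tzn=\mathrm{Ker}\,\nabla\nh v_z$. Since $\,\phi(z)=0$, Lemma~\ref{krnll}(b) gives $\,(\mathrm{Ker}\,\nabla\nh v_z)^\perp\nh=\nabla\nh v_z(\tzm)$, so the assumption $\,\nabla\nh\phi_z\notin\nabla\nh v_z(\tzm)\,$ is equivalent to $\,d\hh\phi_z\ne0\,$ on $\,\tzn$. Hence $\,\phi\,$ restricts to a submersion near $\,z\,$ on $\,N\nnh$, so that $\,P=N\cap\phi^{-1}(0)\,$ is, near $\,z$, a codimension-one submanifold of $\,N\,$ with $\,\tzp=\mathrm{Ker}\,\nabla\nh v_z\cap\mathrm{Ker}\,d\hh\phi_z=H$; in particular $\,H\,$ is a hyperplane in $\,\mathrm{Ker}\,\nabla\nh v_z$. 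By the preceding paragraph together with Lemma~\ref{intsb}(i), every zero of $\,v\,$ near $\,z\,$ lies in $\,P$.

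Finally I would compare $\,P\,$ with $\,K=\mathrm{exp}\hh_z[H\cap U\hh]$, which by Lemma~\ref{cstdm} (with $\,V=H\,$ and $\,E=C\cap H=H$, as $\,H\,$ is null) is a submanifold on which $\,v=0\,$ and $\,\phi=\phi(z)=0$, so that $\,\tzk=H$, $\,K\subseteq Z\cap U'\nnh$, and $\,K\subseteq N$, whence $\,K\subseteq P$. Both $\,K\,$ and $\,P\,$ are submanifolds of $\,N\,$ of dimension $\,\dim\mathrm{Ker}\,\nabla\nh v_z-1\,$ (recall $\,\dim N=\dim\mathrm{Ker}\,\nabla\nh v_z$ and $\,\dim K=\dim H=\dim\mathrm{Ker}\,\nabla\nh v_z-1$) with the same tangent space $\,H\,$ at $\,z$; since $\,K\subseteq P$, the inclusion $\,K\hookrightarrow P\,$ has invertible differential at $\,z$, hence is a local diffeomorphism, and after shrinking $\,U'$ we obtain $\,P\cap U'\subseteq K$. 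Therefore every zero of $\,v\,$ in $\,U'$ lies in $\,P\cap U'\subseteq K$, giving $\,Z\cap U'\subseteq\mathrm{exp}\hh_z[H\cap U\hh]$; together with the reverse inclusion $\,\mathrm{exp}\hh_z[H\cap U\hh]=K\subseteq Z\cap U'$ (Lemma~\ref{cstdm}(a); cf.\ Remark~\ref{incls}), this proves $\,Z\cap U'=\mathrm{exp}\hh_z[H\cap U\hh]$. The only nonroutine point is the local equality $\,K=P$, which rests on the coincidence of dimensions and tangent spaces through the inverse mapping theorem; everything else is assembly of the cited lemmas.
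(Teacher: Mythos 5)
Your proposal is correct and follows essentially the same route as the paper's own proof: the vacuous connecting-limit hypothesis of Lemma~\ref{tadir} (since $H\smallsetminus V=\emptyset$ when $H$ is null) gives $\phi=0$ at all nearby zeros, and then Lemmas~\ref{intsb}, \ref{krnll}(b) and~\ref{cstdm}(a) squeeze $Z\cap U'$ between $K=\mathrm{exp}\hh_z[H\cap U\hh]$ and $N\cap\phi^{-1}(0)$, with $H$ a hyperplane in $\mathrm{Ker}\hskip1.7pt\nabla\nh v_z$. The only (cosmetic) divergence is the final identification of $N\cap\phi^{-1}(0)$ with $K$ near $z$: the paper applies Lemma~\ref{nzero}(a) to $\beta=\phi$ on $N$, while you realize $N\cap\phi^{-1}(0)$ as a codimension-one submanifold $P$ via the implicit function theorem and then apply the inverse mapping theorem to the inclusion $K\hookrightarrow P$; both steps are equally routine and valid.
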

\begin{proof}As $\,H\,$ is a null subspace, $\,H\subseteq H^\perp\nnh$. Thus, 
$\,H\smallsetminus V\nnh=$\hskip3.8pt{\rm\O} \hskip3ptfor 
$\,V=H\cap H^\perp\nnh$, and so, by in Lemma~\ref{tadir}, $\,\phi=0\,$ at all 
zeros of\/ $\,v\,$ near\/ $\,z$. On the other hand, for $\,\,U\nh,\,U'$ as 
above, Lemma~\ref{cstdm}(a) states that 
$\,K=\hh\mathrm{exp}\hh_z[H\cap U\hh]\,$ is a sub\-man\-i\-fold of $\,\,U'$ 
contained in both $\,\phi^{-1}(0)\,$ and the zero set 
$\,Z\,$ of $\,v$. For sufficiently small $\,\,U,\,U'$ and a sub\-man\-i\-fold 
$\,N\hs$ of $\,M\,$ chosen as in Lemma~\ref{intsb}, we now have 
$\,K\subseteq Z\cap U'\nh\subseteq N\nh\cap\phi^{-1}(0)$. Since 
$\,\phi(z)=0\,$ and 
$\,\nabla\nh\phi_z\notin\nabla\nh v_z(\tzm)$, Lemma~\ref{krnll}(b) implies 
that $\,\nabla\nh\phi_z$ is not orthogonal to the whole space 
$\,\mathcal{B}_z=\hs\mathrm{Ker}\hskip1.7pt\nabla\nh v_z$. Thus, 
$\,d\hh\phi_z$ is not identically zero on $\,\mathcal{B}_z$, and 
$\,H=\hs\mathcal{B}_z\cap\hs\mathrm{Ker}\hskip2.2ptd\hh\phi_z$ is a 
co\-di\-men\-\hbox{sion\hh-}\hskip.7ptone subspace of
$\,\mathcal{B}_z$. As $\,H=\tzk\,$ and $\,\tzn\nh=\hh\mathcal{B}_z$ (see 
Lemma~\ref{intsb}(ii)), $\,K\,$ is a co\-di\-men\-\hbox{sion\hh-}\hskip.7ptone 
sub\-man\-i\-fold of $\,N\nnh$, and the restriction of $\,\phi\,$ to $\,N\hs$ 
has a nonzero differential at $\,z$. Consequently, applying 
Lemma~\ref{nzero}(a) to $\,\beta=\phi$, we can make 
$\,N\nnh,\,U\,$ and $\,\,U'$ even smaller, so as to have 
$\,K=N\nh\cap\phi^{-1}(0)$, which proves our assertion since 
$\,K\subseteq Z\cap U'\nh\subseteq N\nh\cap\phi^{-1}(0)$.
\end{proof}

\section{Proof of Theorem~\ref{nbzrs}, case {\rm(\ref{cnd}.a)}}\label{ca}
Let $\,\,U\,$ be a star-shap\-ed neighborhood of $\,0\,$ in $\,\tzm\,$ 
mapped by $\,\mathrm{exp}\hh_z$ dif\-feo\-mor\-phi\-cal\-ly onto a 
neighborhood $\,\,U'$ of $\,z\,$ in $\,M\nh$, such that $\,\phi\ne0\,$ 
everywhere in $\,\,U'\nnh$. For every 
$\,x\in(Z\cap U'\hh)\smallsetminus\{z\}$, we denote by $\,L_x$ the tangent 
direction at $\,z\,$ of the geodesic segment $\,\varGamma\hskip-3pt_x\,$ 
joining $\,z\,$ to $\,x\,$ in $\,\,U'\nnh$. Then
\begin{equation}\label{lxs}
\varGamma\hskip-3pt_x\hskip6pt\mathrm{is\ null\ and}\hskip6ptL_x
\,\subseteq\,\mathcal{B}_z=\hs\mathrm{Ker}\hskip1.7pt\nabla\nh v_z\hskip6.5pt
\mathrm{for\ all}\hskip7ptx\in(Z\cap U'\hh)\smallsetminus\{z\},
\end{equation}
provided that $\,\,U\,$ and $\,\,U'$ are chosen small enough. In fact, 
$\,\varGamma\hskip-3pt_x\hs$ is null by Remark~\ref{phiez}. 
Lemma~\ref{zeros}(b) in turn shows that $\,v\,$ is tangent to 
$\,\varGamma\hskip-3pt_x$, and hence so is the covariant derivative of 
$\,v\,$ in the direction of $\,\varGamma\hskip-3pt_x$. Thus, each $\,L_x$ is 
contained in the eigen\-space of $\,\nabla\nh v_z$ for some eigen\-value 
$\,\lambda_x$. If, no matter how small one made $\,\,U\,$ and $\,\,U'\nnh$, 
the inclusion in (\ref{lxs}) failed to hold, there would exist a sequence, 
converging to $\,z$, of points $\,x\in(Z\cap U'\hh)\smallsetminus\{z\}\,$ with 
$\,\lambda_x\nh\ne0$. Passing to a subsequence, we would have 
$\,L_x\nh\to L\,$ for some line $\,L\,$ through $\,0\,$ in $\,\tzm\nh$. As 
$\,L\,$ would then be a radial limit direction of $\,Z\,$ at $\,z\,$ (cf.\ the 
end of Section~\ref{cl}), Remark~\ref{submf}(ii) with $\,\psi=v\,$ would 
imply that $\,L\subseteq\mathrm{Ker}\hskip1.7pt\nabla\nh v_z$, and so 
$\,\lambda_x\nh\to0$. Finiteness of the spectrum of $\,\nabla\nh v_z$ would 
now give $\,\lambda_x\nh=0\,$ for all but finitely many $\,x\,$ in the 
subsequence, contrary to how the subsequence was selected.

On the other hand, by Lemma~\ref{krnll}(a), 
\begin{equation}\label{nls}
\mathrm{both}\hskip6pt\mathcal{B}_z\nh
=\mathrm{Ker}\hskip1.7pt\nabla\nh v_z\hskip5.5pt\mathrm{and}
\hskip6ptH\subseteq\mathcal{B}_z\hskip5.5pt\mathrm{are\ null\ subspaces\ of}
\hskip6pt\tzm.
\end{equation}
If $\,\mathcal{B}_z$ is contained in $\,\mathrm{Ker}\hskip2.2ptd\hh\phi_z$, 
so that $\,H=\mathcal{B}_z$, (\ref{lxs}) yields (\ref{ncl}), with 
$\,C\cap H=H\,$ in view of (\ref{nls}), which, according to 
Remark~\ref{incls}, proves Theorem~\ref{nbzrs} when 
$\,\mathcal{B}_z\nh\subseteq\hh\mathrm{Ker}\hskip2.2ptd\hh\phi_z$.

From now on we therefore assume that $\,\mathcal{B}_z$ is {\it not\/} 
contained in $\,\mathrm{Ker}\hskip2.2ptd\hh\phi_z$. Thus, $\,H\,$ is a 
co\-di\-men\-\hbox{sion\hh-}\hskip.7ptone subspace of $\,\mathcal{B}_z$, 
and $\,K=\hh\mathrm{exp}\hh_z[H\cap U\hh]\,$ is a 
co\-di\-men\-\hbox{sion\hh-}\hskip.7ptone sub\-man\-i\-fold of 
$\,N\nh=\hh\mathrm{exp}\hh_z[\hh\mathcal{B}_z\cap U\hh]$, while the 
restriction of $\,\phi\,$ to $\,N\hs$ has a nonzero differential at $\,z$. In 
addition, by Lemma~\ref{cstdm}(a) and (\ref{nls}), $\,\phi=\phi(z)\,$ 
everywhere in $\,K$. According to Lemma~\ref{nzero}(a) for 
$\,\beta=\phi-\phi(z)$, making $\,\,U\,$ and $\,\,U'$ even smaller if 
necessary, we can ensure that $\,\phi\ne\phi(z)\,$ everywhere in 
$\,N\nh\smallsetminus K$. This shows that no zero $\,x\,$ of $\,v\,$ lies in 
$\,N\nh\smallsetminus K$, for if one did, Lemma~\ref{zrnlg}(ii) and (\ref{lxs}) 
would give $\,\phi=\phi(z)\,$ somewhere in 
$\,\varGamma\hskip-3pt_x\smallsetminus\{z\}\subseteq N\nh\smallsetminus K$. 
In other words, we again have (\ref{ncl}), with $\,C\cap H=H\,$ (cf.\ 
(\ref{nls})), which, in view of Remark~\ref{incls}, proves Theorem~\ref{nbzrs} 
in case (\ref{cnd}.a).

\section{Proof of Theorem~\ref{nbzrs}, case {\rm(\ref{cnd}.b)}}\label{cb}
We are free to assume that 
$\,\mathcal{B}_z\nh=\mathrm{Ker}\hskip1.7pt\nabla\nh v_z$ is not a null 
subspace of $\,\tzm\nh$. Namely, if $\,\mathcal{B}_z\nh\subseteq\tzm\,$ is a 
null subspace, then so is $\,H\subseteq\mathcal{B}_z$, and the assertion of 
Theorem~\ref{nbzrs} is immediate from Lemma~\ref{nlsbs}, with $\,C\cap H=H\,$ 
since $\,H\,$ is null.

Let us choose the sub\-man\-i\-folds $\,N\hs$ and $\,K\,$ of $\,M\,$ as in 
Lemmas~\ref{intsb} and~\ref{cstdm}, with $\,\,U\,$ small enough so as to 
ensure that $\,K\subseteq N\nnh$. Such $\,\,U\,$ must exist since, by 
Lemma~\ref{cstdm}(a), $\,K\,$ is contained in zero set $\,Z\,$ of $\,v$, while 
all zeros of $\,v\,$ close to $\,z\,$ lie in $\,N\nnh$. 
Lemmas~\ref{cstdm}(a),\hs(d) and~\ref{intsb}(ii) imply that, in fact, not only 
$\,v_x\nh=0$, but also  $\,\txn\nh=\hh\mathrm{Ker}\hskip1.7pt\nabla\nh v_x$ 
whenever $\,x\in K$. We may now use a local trivialization of $\,\tm\,$ on 
$\,\mathrm{exp}\hh_z(U)\,$ to identify each tangent space $\,\txm\nh$, for 
$\,x\in N\nnh$, with $\,\tzm\nnh$, in such a way that $\,\tzm\hs$ itself 
remains unchanged. (One could for instance use the identifications provided by 
parallel transports along geodesics emanating from $\,z$.) This allows us to 
treat $\,2v\,$ as a vec\-tor-val\-ued function $\,f:N\to\tzm\nh$.

The hypotheses of Theorem~\ref{tgdir} are now satisfied by our $\,K,N\nnh,z$, 
the vector space $\,\mathcal{T}\nnh=\tzm\hs$ with $\,\langle\,,\rangle=g_z$ 
and $\,f\,$ as above, $\,\phi:N\to\bbR\,$ obtained by restricting to $\,N\hs$ 
the function in (\ref{lvg}), and 
$\,Y\nnh=\hh\mathrm{exp}\hh_z[\hh C\cap H\cap U\hh]$, where 
$\,H=\hs\mathrm{Ker}\hskip1.7pt\nabla\nh v_z
\cap\hs\mathrm{Ker}\hskip2.2ptd\hh\phi_z\nh\subseteq\tzm\,$ and 
$\,C=\{u\in\tzm:g_z(u,u)=0\hs\}\,$ is the null cone, provided that one 
replaces $\,N\nnh,K\,$ and $\,\,U\,$ with suitable smaller neighborhoods of 
$\,z\,$ in $\,N\hs$ or $\,K\,$ and $\,0\,$ in $\,\tzm\nh$.

Specifically, $\,d\hh\phi_z$ is not identically zero on 
$\,\tzn\nh=\hh\mathrm{Ker}\hskip1.7pt\nabla\nh v_z$ since $\,\phi(z)=0\,$ and 
$\,\nabla\nh\phi_z\notin\nabla\nh v_z(\tzm)\,$ by (\ref{cnd}.b), and so, in 
view of Lemma~\ref{krnll}(b), $\,\nabla\nh\phi_z$ is not orthogonal to all of 
$\,\mathrm{Ker}\hskip1.7pt\nabla\nh v_z$. Next, $\,Y\hs$ is a quadric of the 
required kind due to the very definition of a quadric, in the lines preceding 
Lemma~\ref{quadr}, with the r\^ole of $\,\varPsi\,$ played here by the 
restriction of $\,\mathrm{exp}\hh_z$ to $\,H\nnh\cap U\nh$, which sends 
$\,H\nnh\cap U\hs$ into $\,P\nh=N\cap\phi^{-1}(0)\,$ according to 
Lemma~\ref{cstdm}(a), and is a dif\-feo\-mor\-phism for dimensional reasons. 
Condition (a) in Theorem~\ref{tgdir} holds in turn due to the assumption about 
$\,\mathcal{B}_z\nh=\tzn\hs$ made at the beginning of this section, condition 
(b) follows since $\,K=\hh\mathrm{exp}\hh_z[\hh V\nnh\cap U\hh]\,$ for 
$\,V=H\cap H^\perp$ (see Lemma~\ref{cstdm}), and (c) is immediate from 
Lemma~\ref{cstdm}(a) (which states that $\,v=0$, and hence $\,f=0$, on $\,Y$), 
combined with the equality $\,\txn\nh=\hh\mathrm{Ker}\hskip1.7pt\nabla\nh v_x$ 
for $\,x\in K$, established above (which amounts to $\,d\hskip-.8ptf=0\,$ 
everywhere in $\,K$). Lemma~\ref{intsb}(iv) now implies that the left-hand 
side in (d) equals the Hess\-i\-an at $\,z\,$ of the function 
$\,y\mapsto\langle w,f(y)\rangle\,$ on $\,N\nnh$.

In view of (\ref{cnd}.b), the assertion of Theorem~\ref{tgdir}(ii) amounts to 
the assumption of Lemma~\ref{tadir}, which now implies that all zeros of 
$\,v\,$ close to $\,z\,$ lie in $\,P\nh=N\cap\phi^{-1}(0)$. By 
Theorem~\ref{tgdir}(i), they must lie in $\,Y\hs$ as well, and so (\ref{ncl}) 
holds for sufficiently small $\,\,U\,$ and $\,\,U'\nnh$. Combined with 
Remark~\ref{incls}, this proves Theorem~\ref{nbzrs} in case (\ref{cnd}.b).

\section{Proof of Theorem~\ref{tusbm}}\label{cp}
Let us fix a point $\,z\in Z$. We denote by $\,\phi\,$ the function in 
(\ref{lvg}), by $\,H\,$ the subspace 
$\,\hs\mathrm{Ker}\hskip1.7pt\nabla\nh v_z
\cap\hs\mathrm{Ker}\hskip2.2ptd\hh\phi_z$ of $\,\tzm\nh$, and by $\,\tzz\,$ 
(or, $\,b_z$) the tangent space (or, respectively, the second fundamental 
form) at $\,z\,$ of the connected component of $\,Z\,$ containing $\,z$,
provided that $\,z\,$ is not a singular point of $\,Z$. Three cases are 
possible:
\begin{enumerate}
  \def\theenumi{{\rm\alph{enumi}}}
\item[{\rm(i)}] neither (\ref{cnd}.a) nor (\ref{cnd}.b) holds at $\,z$,
\item[{\rm(ii)}] $z\,$ satisfies (\ref{cnd}.a) or (\ref{cnd}.b) and the 
metric $\,g_z$ restricted to $\,H\,$ is not sem\-i\-def\-i\-nite,
\item[{\rm(iii)}] $z\,$ satisfies (\ref{cnd}.a) or (\ref{cnd}.b) and $\,g_z$ 
is sem\-i\-def\-i\-nite on $\,H$.
\end{enumerate}
In case (i), Theorem~\ref{escnf} allows us to change the metric 
con\-for\-mal\-ly so as to make $\,v\,$ a Kil\-ling field for the new 
metric $\,g\hh'$ on some nor\-mal-co\-or\-di\-nate neighborhood $\,\,U'$ of 
$\,z\,$ in $\,(M,g\hh'\hh)$. Assertion (a) in Theorem~\ref{tusbm} is now 
immediate, as $\,\mathrm{exp}\hh_z$ (corresponding to $\,g\hh'$) sends 
short line segments emanating from $\,0\,$ in $\,\tzm\,$ onto 
$\,g\hh'\nnh$-ge\-o\-des\-ics, and so the local flow of $\,v\,$ corresponds 
via $\,\mathrm{exp}\hh_z$ to the linear local flow near $\,0\,$ in 
$\,\tzm\nh$, generated by $\,\partial\hh v_z$ (notation of Section~\ref{dh}).

Consequently, in case (i), 
$\,\tzz=\hh\mathrm{Ker}\hskip2.7pt\partial\hh v_z\nh
=\hh\mathrm{Ker}\hskip1.7pt\nabla\nh v_z$. Also, since the 
$\,g\hh'\nnh$-Kil\-ling field $\,v\,$ has zero $\,g\hh'\nnh$-di\-ver\-gence, 
$\,\nabla\nh v_x\nh=\partial\hh v_x$ is trace\-less at every $\,x\in Z\,$ near 
$\,z$. Thus, $\,\phi=0\,$ at all points of $\,Z\,$ close to $\,z$. As a result, 
the co\-di\-men\-sion of $\,\tzz\,$ in $\,\tzm\,$ is even 
(Lemma~\ref{krnll}(b)), while $\,b_z$ is a tensor multiple of the metric due 
to Lemma~\ref{tgapt}(ii), the al\-read\-y\nh-es\-tab\-lish\-ed assertion (a) 
of Theorem~\ref{tusbm}, and (\ref{inv}).

Next, in cases (ii) and (iii), Theorem~\ref{nbzrs} clearly implies (b) in 
Theorem~\ref{tusbm}, with $\,g\hh'\nh=g$, while Lemma~\ref{cstdm}(a) shows 
that $\,\phi=\phi(z)\,$ at all points of $\,Z\,$ close to $\,z$. Combined 
with Remarks~\ref{semdf} and~\ref{snglr}, this gives the description of the 
singular subset $\,\Delta\,$ required in Theorem~\ref{tusbm}. Thus, in case 
(ii) (or, (iii)), $\,z\,$ is a singular (or, respectively, nonsingular) point 
of $\,Z$.

Consider now case (iii). In view of Theorem~\ref{nbzrs}, $\,\tzz\,$ is the 
null\-space of $\,H$, that is, $\,\tzz=H\cap H^\perp\nnh$, while 
$\,b_z\nh=0\,$ by Lemma~\ref{tgapt}(ii) and, as noted above, $\,Z\,$ has no 
singularities near $\,z$. It follows now that case (iii) represents an open 
condition, or, in other words, we will still have (iii) after $\,z\,$ has been 
replaced with any nearby point $\,x\in Z$. In fact, case (ii) for such $\,x\,$ 
cannot occur since they are nonsingular in $\,Z$. To exclude case (i) for 
them, note that (iii), for $\,z$, has two subcases: (\ref{cnd}.a) and 
(\ref{cnd}.b). In the former, (\ref{cnd}.a) obviously remains valid at 
nearby points. The latter subcase amounts in turn to assuming that 
$\,\phi(z)=0\,$ and $\,\mathrm{Ker}\hskip1.7pt\nabla\nh v_z$ is not 
contained in $\,\mathrm{Ker}\hskip2.2ptd\hh\phi_z$ (see 
Lemma~\ref{krnll}(b)). By Lemma~\ref{cstdm}(a),\hs(c),\hs(d), these 
assumptions will still hold when $\,z\,$ is replaced with any nearby 
$\,x\in K$, which, by Theorem~\ref{nbzrs}, are the same points as nearby 
$\,x\in Z$. Consequently, points $\,x\in Z\,$ near $\,z\,$ cannot represent 
case (i).

Thus, in case (iii), due to its o\-pen-con\-di\-tion property, the 
equalities $\,\tzz=H\cap H^\perp$ and $\,b_z\nh=0\,$ imply that the 
intersection of $\,Z\,$ with some neighborhood of $\,z\,$ is a null totally 
geodesic sub\-man\-i\-fold of $\,(M,g)$. The proof of Theorem~\ref{tusbm} is 
now complete.
\begin{remark}\label{opnns}As we just saw, case (iii) constitutes an open 
condition in the set $\,Z\,$ of all zeros of $\,v$. By Theorem~\ref{escnf}, 
the same is true of case (i). Not so, however, in case (ii): according to 
Theorem~\ref{nbzrs} and Remark~\ref{snglr}(b), in every neighborhood of a 
point $\,z\in Z\,$ representing case (ii), there exist points of $\,Z\,$ which 
are nonsingular, and hence, as we saw above, must correspond to case (i) or 
case (iii).
\end{remark}
\begin{remark}\label{lorcs}In the Lo\-rentz\-i\-an case, Theorem~\ref{tusbm} 
can obviously be rephrased so as to reflect the fact that null 
sub\-man\-i\-folds can be at most \hbox{one\hh-}\hskip0ptdi\-men\-sion\-al, 
while the only singularities of the zero set that may occur are those 
associated with null cones in Lo\-rentz\-i\-an subspaces of the tangent space.
\end{remark}
\begin{remark}\label{cfcir}Theorem~\ref{tusbm} provides hardly any information 
about those connected components of $\,(Z\cap U')\smallsetminus\Delta\,$ which 
happen to be \hbox{one\hh-}\hskip0ptdi\-men\-sion\-al. For sub\-man\-i\-folds 
$\,K\,$ with $\,\dim K=1$, the property of being totally umbilical is nearly 
meaningless, as such $\,K\,$ always has it, except at points $\,x\in K\,$ at 
which $\,\tzk\,$ is a null subspace and the second fundamental form $\,b_x$ is 
nonzero. It is worth pointing out that 
\hbox{one\hh-}\hskip0ptdi\-men\-sion\-al connected components of 
$\,(Z\cap U')\smallsetminus\Delta\,$ need not, in general, be {\it 
con\-for\-mal circles}. (For a definition, see \cite{bailey-eastwood}.) In 
fact, a non-null geodesic $\,t\mapsto x(t)\,$ in a 
pseu\-\hbox{do\hs-}\hskip0ptRiem\-ann\-i\-an manifold is a con\-for\-mal 
circle if and only if $\,S(\dot x,\,\cdot\,)=0$, where $\,S\,$ is the Schouten 
tensor \cite[p.\ 217]{bailey-eastwood}. Let $\,u\,$ now be a Kil\-ling field 
with a nonempty discrete set $\,Y$ of zeros on a 
pseu\-\hbox{do\hs-}\hskip0ptRiem\-ann\-i\-an manifold $\,(N,h)$, the scalar 
curvature of which is nonzero at some point $\,y\in Y\nnh$. (Such $\,u\,$ 
obviously exist on \hbox{even\hh-}\hskip0ptdi\-men\-sion\-al standard 
spheres.) Extending $\,u\,$ trivially to a Kil\-ling field $\,v\,$ on the 
product manifold $\,(M,g)=(\bbR,dt^2)\times(N,h)$, we see that the geodesic 
$\,\bbR\ni t\mapsto(t,y)\,$ forms a connected component of the zero set of 
$\,v$, while $\,\mathrm{Ric}\hh(\dot x,\,\cdot\,)=0$, and hence 
$\,S(\dot x,\,\cdot\,)\ne0\,$ due to the definition of $\,S\,$ in 
Section~\ref{cv}. Thus, the geodesic in question is not a con\-for\-mal circle.
\end{remark}

\section*{References}


\end{document}